\DeclareMathOperator{\R}{\mathbb{R}}
\DeclareMathOperator{\E}{\mathbb{E}}
\numberwithin{equation}{section}
\newtheorem{theorem}{Theorem}[section]
\newtheorem{proposition}[theorem]{Proposition}
\newtheorem{remark}[theorem]{Remark}
\newtheorem{lemma}[theorem]{Lemma}
\newtheorem{assumption}{Assumption}
\newcommand{\widesim}[2][1.5]{
  \mathrel{\overset{#2}{\scalebox{#1}[1]{$\sim$}}}
}
\title{Particle method for the McKean-Vlasov equation with common noise}
\author{%
Théophile Le Gall\thanks{\small CEREMADE, CNRS, UMR 7534, Universit{\'e} Paris-Dauphine,
PSL University, 75016 Paris, France, \texttt{legall@ceremade.dauphine.fr}.}%
  }
\begin{document}

\maketitle

\begin{abstract}

This paper studies the numerical simulation of the solution to the McKean-Vlasov equation with common noise. We begin by discretizing the solution in time using the Euler scheme, followed by spatial discretization through the particle method, inspired by the propagation of chaos property. Assuming Hölder continuity in time, as well as Lipschitz continuity in the state and measure arguments of the coefficient functions $b$, $\sigma$ and $\sigma^0$, we establish the convergence rate of the Euler scheme and the particle method. These results extend those in \cite{NumMethMKV} for the standard McKean-Vlasov equation without common noise. Finally, we present two simulation examples : a modified conditional Ornstein Uhlenbeck process with common noise and an interbank market model presented in \cite{ren2024risk}.

\end{abstract}

\bigskip
\noindent\textbf{Keywords.} Euler scheme, McKean-Vlasov equation with common noise, Mean-field limits, Numerical analysis of the particle method.

\section{Introduction}

We consider the $\R^d$-valued McKean-Vlasov stochastic differential equation (SDE) with common noise defined for $t\in[0,T]$ by
\begin{equation} \label{eq:MKV}
    dX_t=b(t,X_t, \mathcal{L}^1(X_t))dt + \sigma(t,X_t,\mathcal{L}^1(X_t))dW_t + \sigma^0(t,X_t, \mathcal{L}^1(X_t))dW_t^0,
\end{equation}
where, for some $T>0$, $W = (W_t)_{t\in[0,T]}$ and $W^0 = (W^0_t)_{t\in[0,T]}$ are two independent Brownian motions respectively called idiosyncratic noise and common noise. The coefficient $b$ is a mapping from $[0,T]\times\R^d\times\mathcal{P}_p(\R^d)$ to $\R^d$ where $\mathcal{P}_p(\R^d)$ is the space of probability measure having a $p$-th finite moment for $p\geq 1$. We endow $\mathcal{P}_p(\R^d)$ with the $p$-Wasserstein metric defined later in \eqref{eq:def_wasserstein_distance}. The coefficients $\sigma, \sigma^0$ are mappings from $[0,T]\times\R^d\times\mathcal{P}_p(\R^d)$ to $\R^d\otimes\R^q$ which represent respectively the intensity of the idiosyncratic noise $W$ and the common noise $W^0$. The notation $\mathcal{L}^1(\cdot)$ represents for the conditional law given the trajectory of the common noise (see further \eqref{eq:def-L1X}, Proposition \ref{prop:lemma 2.4 Carmona Delarue} and \ref{lem:L1_cond_law}). The initial condition of Equation \eqref{eq:MKV} is a random variable $X_0$ independent of $W$ and $W^0$.\medskip

The McKean-Vlasov equation, initially introduced by H. McKean \cite{McKean} is a nonlinear partial differential equation (PDE) associated with a class of stochastic differential equations (SDEs) where the drift and diffusion coefficients depend not only on the time and the state of the process, but also on its marginal laws (see e.g. \cite{TopicsPoC}). The distribution-dependent structure of the McKean-Vlasov equation is extensively applied for modeling phenomena in statistical physics (see e.g. \cite{Bossy_2022}, \cite{Martzel_2001}), mathematical biology (see e.g. \cite{baladron2012mean}), social sciences and quantitative finance, both frequently driven by advancements in mean field games and interacting diffusion models (see e.g. \cite{cardaliaguet2018mean}, \cite{LASRY2018886}). The McKean-Vlasov equation equipped with a common noise (see further \eqref{eq:MKV} for a precise definition) was first introduced in \cite{WellPosednessMFGCommonNoise}, \cite{MFGwithCommonNoise}, \cite{LinQuadControl_MFGCommonNoise} and \cite{TranslationInvariantMFGCommonN}, where the term \textit{common noise} served to model a type of shared risk in a particle system. Several papers such as \cite{MR4022284} or \cite{MR4046528} explore how the introduction of the common noise can restore uniqueness in mean-field games, which are derived from deterministic differential games involving a large number of players.

This paper aims to develop a numerical method for the McKean-Vlasov equation with common noise, accompanied by an analysis of the associated convergence rate. For the standard McKean-Vlasov equation without common noise, under Lipschitz continuity assumptions on the coefficient functions, we refer to \cite{bossy1997stochastic} and \cite{NumMethMKV} for the simulation of the solution to the SDE, and to \cite{antonelli2002rate} and \cite{DensitySimuMKV} for the estimation of the density solution to the McKean-Vlasov PDE. Additionally, recent advancements in handling super-linear growth coefficient functions can be found in  \cite{Reis_SimuMKV}, \cite{CHEN2022127180} among others. For the simulation of the invariant measure, we refer to \cite{chassagneux2024computing}. For the McKean-Vlasov equation with common noise, a recent study \cite{MilsteinSchemeMKV} provides the convergence rate of a numerical scheme in a different setting from the one considered in this paper. For a detailed comparison, see further Remark \ref{rem:comparaison}.

\noindent

Following the construction as presented in \cite{bossy1997stochastic} and \cite{NumMethMKV} for standard McKean-Vlasov equation without common noise, our approach employs the Euler scheme, defined further in \eqref{eq:euler scheme MKV}, as a temporal discretization, and the particle method as a spatial discretization, defined further in \eqref{eq:euler particle system}, that was introduced in \cite{NumMethMKV} and we extend it to account for the case with common noise. Notice that the addition of common noise needs further careful consideration, particularly in terms of the conditional distributions given the common noise in the measure argument of the coefficient functions. In this context, the empirical measure serves as an estimator for the law of the solution process, conditioned on the common noise. This method relies on the propagation of chaos property, initially introduced by Kac \cite{kac1956foundations} and further studied in \cite{lacker2018strong}, \cite{lacker2023hierarchies}, and \cite{TopicsPoC}.

\subsection{Probabilistic settings}

In this paper, we consider two filtered probability spaces $\big(\Omega^0, \mathcal{F}^0,\mathbb{F}^0, \mathbb{P}^0\big)$ and $(\Omega^1, \mathcal{F}^1,\mathbb{F}^1, \mathbb{P}^1)$ satisfying the usual conditions, where $\mathbb{F}^0 = (\mathcal{F}^0_t)_{t\in[0,T]}$ and $\mathbb{F}^1 = (\mathcal{F}^1_t)_{t\in[0,T]}$. In addition, we provide $W^0=(W^0_t)_{t\in[0,T]}$ and $W=(W_t)_{t\in[0,T]}$ two $q$-dimensional $\mathbb{F}^0$-adapted and $\mathbb{F}^1$-adapted Wiener processes respectively supported on $\big(\Omega^0, \mathcal{F}^0, \mathbb{P}^0\big)$ and $(\Omega^1, \mathcal{F}^1, \mathbb{P}^1)$ which respectively represent the common noise and the idiosyncratic noise. We introduce naturally the product space $(\Omega,\mathcal{F},\mathbb{F}, \mathbb{P})$ where $\Omega = \Omega^0 \times \Omega^1$, $(\mathcal{F},\mathbb{P})$ is the completion of $(\mathcal{F}^0\otimes\mathcal{F}^1, \mathbb{P}^0\otimes\mathbb{P}^1)$ and $\mathbb{F}=(\mathcal{F}_t)_{t\in[0,T]}$ is the complete and right-continuous augmentation of $(\mathcal{F}^0_t\otimes\mathcal{F}^1_t)_{t\in[0,T]}$. Consider a random variable $X$ defined on the filtered probability space $(\Omega, \mathcal{F}, \mathbb{F}, \mathbb{P})$. Then, for $\mathbb{P}^0$-a.e., $\omega^0 \in \Omega^0$, $X(\omega^0,\cdot)$ is a random variable on $(\Omega^1, \mathcal{F}^1, \mathbb{P}^1)$ (see e.g. \cite[Section 2.1.3]{Carmona_Delarue2}). In particular, we may define 
\begin{equation}\label{eq:def-L1X}
\mathcal{L}^1(X) : \omega^0\in \Omega^0\mapsto \mathcal{L}(X(\omega^0,\cdot))\in\mathcal{P}(\R^d),
\end{equation}
for almost every $\omega^0\in \Omega^0$. On the exceptional event where $\mathcal{L}(X(\omega^0,\cdot))$ cannot be computed, we may assign it arbitrary values in $\mathcal{P}(\R^d)$.

\begin{proposition}[{\cite[Lemma 2.4]{Carmona_Delarue2}}] \label{prop:lemma 2.4 Carmona Delarue}
Given a random variable $X: (\Omega, \mathcal{F}, \mathbb{P})\rightarrow (\R^d, \mathcal{B}(\R^d))$, the mapping $\mathcal{L}^1(X)$ defined by \eqref{eq:def-L1X} is almost surely well defined under $\mathbb{P}^0$, and forms a random variable from $(\Omega^0, \mathcal{F}^0, \mathbb{P}^0)$ into $\mathcal{P}(\R^d)$ endowed with its Borel $\sigma$-field generated by the Lévy-Prokhorov metric (see \cite[Section 5.1.1 and Proposition 5.7]{Carmona_Delarue1}). Moreover, the random variable  $\mathcal{L}^1(X)$ provides a conditional law of $X$ given $\mathcal{F}^0$.
\end{proposition}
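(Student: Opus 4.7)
The plan is to exploit the Fubini structure of the product space $(\Omega,\mathcal{F},\mathbb{P})$, which is the completion of $(\Omega^0\times\Omega^1,\mathcal{F}^0\otimes\mathcal{F}^1,\mathbb{P}^0\otimes\mathbb{P}^1)$. The argument proceeds in four steps, the technical heart being measurability into $\mathcal{P}(\R^d)$ equipped with the Lévy-Prokhorov topology.

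First, I would reduce to the case where $X$ is genuinely $\mathcal{F}^0\otimes\mathcal{F}^1$-measurable rather than merely $\mathcal{F}$-measurable. Any $\mathcal{F}$-measurable representative agrees $\mathbb{P}$-a.s. with an $\mathcal{F}^0\otimes\mathcal{F}^1$-measurable random variable $\widetilde X$, and the exceptional set $\{X\neq \widetilde X\}$ lies in an $\mathcal{F}^0\otimes\mathcal{F}^1$-measurable $\mathbb{P}$-null set; by Tonelli, the $\omega^0$-sections of this null set are themselves $\mathbb{P}^1$-null outside a $\mathbb{P}^0$-null set, so replacing $X$ by $\widetilde X$ only alters $\mathcal{L}^1(X)$ on a $\mathbb{P}^0$-null set. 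Assume then that $X$ is $\mathcal{F}^0\otimes\mathcal{F}^1$-measurable.

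Second, I apply Tonelli's theorem to conclude that, for $\mathbb{P}^0$-a.e.\ $\omega^0\in\Omega^0$, the section $\omega^1\mapsto X(\omega^0,\omega^1)$ is $\mathcal{F}^1$-measurable, so its law under $\mathbb{P}^1$ is a well-defined element of $\mathcal{P}(\R^d)$; on the exceptional $\mathbb{P}^0$-null set I assign an arbitrary fixed measure, as the statement permits. Third, for measurability I rely on \cite[Proposition 5.7]{Carmona_Delarue1}: the Borel $\sigma$-field on $\mathcal{P}(\R^d)$ for the Lévy-Prokhorov metric coincides with the $\sigma$-field generated by the evaluation functionals $\mu\mapsto\int f\,d\mu$ with $f\in C_b(\R^d)$. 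For each such $f$, Fubini yields
\[
\omega^0\mapsto\int_{\R^d}f\,d\mathcal{L}(X(\omega^0,\cdot))=\int_{\Omega^1}f(X(\omega^0,\omega^1))\,d\mathbb{P}^1(\omega^1),
\]
which is $\mathcal{F}^0$-measurable, hence $\mathcal{L}^1(X)$ is $\mathcal{F}^0/\mathcal{B}(\mathcal{P}(\R^d))$-measurable.

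Finally, to verify that $\mathcal{L}^1(X)$ is a version of the conditional law of $X$ given $\mathcal{F}^0$, it suffices, by a monotone class argument, to check that for every bounded Borel $\phi:\R^d\to\R$ and every bounded $\mathcal{F}^0$-measurable $Y$,
\[
\E\bigl[\phi(X)\,Y\bigr]=\E\Bigl[Y\int_{\R^d}\phi\,d\mathcal{L}^1(X)\Bigr],
\]
which is again a direct consequence of Fubini applied to $\phi(X(\omega^0,\omega^1))\,Y(\omega^0)$ on the product space, using that $Y$ lifts to $\Omega$ as a function of $\omega^0$ alone. The main obstacle is the third step: one must handle the Lévy-Prokhorov $\sigma$-field correctly, relying on the fact that it is generated by integration against $C_b$ functions; once that reduction is in place, everything else is a straightforward exercise in Fubini-Tonelli applied to the product structure, and the completion issues in step one are a standard but necessary preamble.
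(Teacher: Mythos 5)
The paper does not prove this proposition; it is imported verbatim from \cite[Lemma 2.4]{Carmona_Delarue2}, and the reader is referred there. Your reconstruction is correct and matches the standard argument behind that cited result: reduce to a genuine $\mathcal{F}^0\otimes\mathcal{F}^1$-measurable representative via the completion and a Tonelli section argument, use the fact from \cite[Proposition 5.7]{Carmona_Delarue1} that the Borel $\sigma$-field of the L\'evy--Prokhorov topology is generated by the maps $\mu\mapsto\int f\,d\mu$ with $f\in C_b(\R^d)$ to obtain $\mathcal{F}^0$-measurability of $\mathcal{L}^1(X)$, and verify the conditional-law property by Fubini against bounded Borel test functions and bounded $\mathcal{F}^0$-measurable weights. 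One small point worth making explicit in step three: to run the generated-$\sigma$-field reduction cleanly it is convenient to pass to a countable determining subclass of $C_b(\R^d)$ (which exists since $\R^d$ is Polish), so that measurability of $\mathcal{L}^1(X)$ follows from measurability of countably many real-valued maps; this is implicit in your appeal to Proposition 5.7. With that understood, the argument is complete and is essentially the one in the cited reference.
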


\noindent
We endow $\mathcal{P}_p(\R^d)$ with the $p$-Wasserstein metric 
\begin{equation} \label{eq:def_wasserstein_distance}
    \mathcal{W}_p(\mu,\nu) = \underset{\pi\in\Pi(\mu,\nu)}{\inf}\bigg[\int_{\R^d\times \R^d}|x-y|^p\pi(dx,dy) \bigg]^{\frac{1}{p}},
\end{equation} 
where $\Pi(\mu,\nu)$ denotes the set of probability measure on $\R^d\times \R^d$ with respective marginals $\mu$ and $\nu$.

\subsection{Construction of the Euler scheme and particle method} \label{subsection : def scheme}

Let $M$ be the number of time discretization and $h=\frac{T}{M}$ be the time step. For every $m\in\{0,\ldots,M\}$, we define $t_m = hm$. We consider $W^1, \ldots, W^N$  i.i.d. copies of the Brownian motion $W$, and define the re-normalized increments  $Z_m^i,\; Z_m,\; Z_m^0,\; 1\leq m\leq M,\; 1\leq i \leq N$ as follows 
\[Z^i_{m+1} = \frac{1}{\sqrt{h}} \big(W^i_{t_{m+1}} - W^i_{t_m}\big), \:Z^0_{m+1} = \frac{1}{\sqrt{h}} \big(W^0_{t_{m+1}} - W^0_{t_m}\big), \:Z_{m+1} = \frac{1}{\sqrt{h}} \big(W_{t_{m+1}} - W_{t_m}\big).\]
The theoretical Euler scheme of the McKean-Vlasov equation with common noise \eqref{eq:MKV} is defined, for $0\leq m\leq M-1$, by $\bar{X}_0=X_0$ and
\begin{equation} \label{eq:euler scheme MKV}
    \begin{aligned}
        \bar{X}^M_{t_{m+1}}=\bar{X}_{t_m}^M &+ h\cdot b\big(t_m,\bar{X}^M_{t_m}, \mathcal{L}^1\big(\bar{X}_{t_m}^M\big)\big) + \sqrt{h}\cdot \sigma\big(t_m,\bar{X}^M_{t_m}, \mathcal{L}^1\big(\bar{X}_{t_m}^M\big)\big)Z_{m+1}\\ &+ \sqrt{h}\cdot \sigma^0\big(t_m,\bar{X}^M_{t_m}, \mathcal{L}^1\big(\bar{X}_{t_m}^M\big)\big)Z^0_{m+1},
    \end{aligned}
\end{equation}
equipped with its natural continuous extension defined, for every $t\in[t_m, t_{m+1})$, by 
\begin{equation} \label{eq:continuous euler scheme MKV}
    \begin{aligned}
        \bar{X}^M_t=\bar{X}_{t_m}^M &+ b\big(t_m,\bar{X}^M_{t_m}, \mathcal{L}^1\big(\bar{X}_{t_m}^M\big)\big)(t-t_m) + \sigma\big(t_m,\bar{X}^M_{t_m}, \mathcal{L}^1\big(\bar{X}_{t_m}^M\big)\big)(W_t-W_{t_m})\\ &+ \sigma^0\big(t_m,\bar{X}^M_{t_m}, \mathcal{L}^1\big(\bar{X}_{t_m}^M\big)\big)(W^0_t-W^0_{t_m}).
    \end{aligned}
\end{equation}

\noindent
At each time step $t_m$, we build an $N$-particle system $(\bar{X}_{t_m}^{1,N}, \ldots,\,\bar{X}_{t_m}^{N,N})$ such that for every $i \in \{1,\ldots,N\}$, we have
\begin{equation} \label{eq:euler particle system}
    \left\{
    \begin{array}{ccl}
        \bar{X}_{t_{m+1}}^{i,N} &=& \bar{X}_{t_{m}}^{i,N} + h\cdot b\big({t_m},\bar{X}_{t_m}^{i,N}, \bar{\mu}_{t_m}^N\big) + \sqrt{h}\cdot \sigma\big({t_m},\bar{X}_{t_m}^{i,N}, \bar{\mu}_{t_m}^N\big)Z^i_{m+1} \smallskip \\
        && \quad + \sqrt{h}\cdot \sigma^0\big({t_m},\bar{X}_{t_m}^{i,N}, \bar{\mu}_{t_m}^N\big) Z^{0}_{m+1} \medskip \\
        \bar{\mu}_{t_m}^N &=& \frac{1}{N}\sum_{i=1}^N \delta_{\bar{X}_{t_m}^{i,N}}.
    \end{array}\right.
\end{equation}
also equipped with its natural continuous extension defined, for every $t\in[t_m, t_m+1)$, by 
\begin{equation}\label{eq:particle method continuous scheme}
    \bar{X}^{i,N}_{t} = \bar{X}^{i,N}_{t_m} + \int_{t_m}^tb(t_m, \bar{X}^{i,N}_{t_m}, \bar{\mu}^N_{t_m})ds + \int_{t_m}^t \sigma(t_m, \bar{X}^{i,N}_{t_m}, \bar{\mu}^N_{t_m})dW^i_s + \int_{t_m}^t \sigma^0(t_m, \bar{X}^{i,N}_{t_m}, \bar{\mu}^N_{t_m})dW^0_s.
\end{equation}
\noindent
In the system \eqref{eq:euler particle system}, at each time step $t_m$, the particles $\bar{X}_{t_{m}}^{i,N}$, $1\leq i\leq N$, have interaction through the empirical measure $\bar{\mu}_{t_m}^N$. The idea of the particle method is to use $\bar{\mu}_{t_m}^N$ as an estimator of $\mathcal{L}^1\big(\bar{X}_{t_m}^M\big)$ in definition \eqref{eq:euler scheme MKV}, at each time step $t_m$, $0\leq m\leq M$.

\subsection{Assumptions and main results}

The main results of this paper will be established under the following assumptions, which are assumed to be held for a fixed $p\in [2,\infty)$.

\begin{assumption}
\label{assumption 1}
The random variable $X_0$ is defined on $(\Omega^1,\mathcal{F}^1, \mathbb{P}^1)$ such that $\E^1(|X_0|^p) < \infty.$
\end{assumption}

\begin{assumption}
\label{assumption Lipschitz} 
    The coefficient mappings $b$, $\sigma$ and $\sigma^0$ are continuous in time and Lipschitz continuous in the state and measure arguments, that is, there exists a constant $L>0$ such that for every $t\in [0,T],\, x,y\in\R^d$ and $\mu,\nu\in\mathcal{P}_p(\R^d)$, we have
    \begin{equation*}
        \max(|b(t,x,\mu) - b(t,y,\nu)|, |\sigma(t,x,\mu)-\sigma(t,y,\nu)|, |\sigma^0(t,x,\mu)-\sigma^0(t,y,\nu)|)
        \leq L\big(|x-y| + \mathcal{W}_p(\mu,\nu)\big).
    \end{equation*}
\end{assumption}

\noindent
Assumptions \ref{assumption 1} and \ref{assumption Lipschitz} guarantee the existence and strong uniqueness of a solution $X=(X_t)_{t\in[0,T]}$ to the McKean-Vlasov equation with common noise \eqref{eq:MKV} satisfying the following estimate
\begin{equation} \label{eq : estimate MKV solution}
\bigg\| \underset{t \in [0,T]}{\sup} |X_t|\bigg\|_p  \leq C\Big(1+\|X_0\|_p\Big),
\end{equation}
where $C$ is a positive constant depending on $p, T, L, b, \sigma$ and $\sigma^0$. For the proof in the case $p=2$, we refer to Proposition 2.8 in \cite{Carmona_Delarue2}. The proof for $p>2$ follows a similar approach, with only minor differences.

\begin{assumption}
\label{assumption Holder}
    The coefficient mappings $b$, $\sigma$ and $\sigma^0$ are $\rho\,$-Hölder continuous in time, for some $\rho \in (0,1]$ uniformly in space and measure, in the sense that there exists a constant $L>0$ such that for every $s,t\in[0,T]$, $x\in\R^d$, $\mu\in\mathcal{P}_p(\R^d)$, we have
    \begin{equation*}
        \max(|b(t,x,\mu) - b(s,x,\mu)|, |\sigma(t,x,\mu)-\sigma(s,x,\mu)|, |\sigma^0(t,x,\mu)-\sigma^0(s,x,\mu)|)\leq L\big(1+|x|+\mathcal{W}_p(\mu,\delta_0)\big)\:|t-s|^{\rho}.
    \end{equation*}
\end{assumption}

\smallskip
The main results of this paper are the following two theorems whose proofs are presented in Section \ref{section:proof particle method cv}.

\begin{theorem}\label{thm:particle method cv rate}
    Assume that Assumptions \ref{assumption 1}, \ref{assumption Lipschitz} hold for some $p\in[2,\infty)$. Fix $M\in \mathbb{N}$ and set $h=\frac{T}{M}$. For every $m=0, \ldots, M$,  let $\bar{\mu}_{t_m} = \mathcal{L}^1(\bar{X}_{t_m})$, where $\bar{X}_{t_m}$ is defined by \eqref{eq:euler scheme MKV} and let $(\bar{\mu}^N_{t_m})_{1\leq m\leq M}$ denote the empirical measure of the particles $(\bar{X}^{i,N}_{t_m})_{1\leq i \leq N}$ defined by the particle method \eqref{eq:euler particle system}.
    \begin{enumerate}
        \item[(i)] We have
        \[ \underset{m \in \{1,\ldots, M\}}{\sup}\, \big\|  \mathcal{W}_p(\bar{\mu}^N_{t_m}, \bar{\mu}_{t_m})\big\|_p \xrightarrow[N\rightarrow +\infty]{} 0. \]
        \item[(ii)] Moreover, if we assume that Assumption \ref{assumption 1} holds for $p+\varepsilon\in(2,\infty)$ for some $\varepsilon >0$, we have the following rates of convergence
        \begin{equation*}
            \underset{m \in \{1,\ldots, M\}}{\sup} \, \bigg\| \mathcal{W}_p(\bar{\mu}^N_{t_m}, \bar{\mu}_{t_m}) \bigg\|_p \leq C \left\{ 
            \begin{array}{ll}
                N^{-\frac{1}{2p}} + N^{-\frac{\varepsilon}{p(p+\varepsilon)}} & \text{ if } p>\frac{d}{2} \text{ and } \varepsilon\neq p, \\
                N^{-\frac{1}{2p}}\big(\log(1+N)\big)^\frac{1}{p} + N^{-\frac{\varepsilon}{p(p+\varepsilon)}} & \text{ if } p=\frac{d}{2} \text{ and } \varepsilon\neq p,\\
                N^{-\frac{1}{d}}+N^{-\frac{\varepsilon}{p(p+\varepsilon)}} & \text{ if } p\in(0,\frac{d}{2}) \text{ and } p+\varepsilon\neq \frac{d}{d-p},
            \end{array}\right.
        \end{equation*}
        where $C$ is a positive constant which depends on $d$, $p$, $L$, $T$, $\|X_0\|_{p+\varepsilon}$, $b$, $\sigma$ and $\sigma^0$.
    \end{enumerate}
\end{theorem}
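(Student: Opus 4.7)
The plan is to introduce an auxiliary system of non-interacting Euler copies and reduce the theorem to two estimates via the triangle inequality. For each $i\in\{1,\ldots,N\}$, let $\bar{X}^{i,M}_{t_m}$ denote the Euler scheme \eqref{eq:euler scheme MKV} driven by the idiosyncratic Brownian motion $W^i$ (the same $W^i$ used for particle $i$ in \eqref{eq:euler particle system}) together with the common noise $W^0$, and started from an i.i.d.\ copy $X_0^i$ of $X_0$. Conditionally on $\mathcal{F}^0$, the variables $(\bar{X}^{i,M}_{t_m})_{1\leq i\leq N}$ are i.i.d.\ with common law $\bar{\mu}_{t_m}=\mathcal{L}^1(\bar{X}^M_{t_m})$. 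Denoting their empirical measure by $\tilde{\mu}^N_{t_m}=\tfrac{1}{N}\sum_{i=1}^N\delta_{\bar{X}^{i,M}_{t_m}}$, I will use
\begin{equation*}
\mathcal{W}_p(\bar{\mu}^N_{t_m},\bar{\mu}_{t_m}) \;\leq\; \mathcal{W}_p(\bar{\mu}^N_{t_m},\tilde{\mu}^N_{t_m}) + \mathcal{W}_p(\tilde{\mu}^N_{t_m},\bar{\mu}_{t_m}),
\end{equation*}
and control each piece separately.

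\textbf{First term (synchronous coupling and Gronwall).} The coupling $i\mapsto i$ between $\bar{\mu}^N_{t_m}$ and $\tilde{\mu}^N_{t_m}$ gives $\mathcal{W}_p^p(\bar{\mu}^N_{t_m},\tilde{\mu}^N_{t_m})\leq \tfrac{1}{N}\sum_{i=1}^N|\bar{X}^{i,N}_{t_m}-\bar{X}^{i,M}_{t_m}|^p$. Subtracting \eqref{eq:euler scheme MKV} from \eqref{eq:euler particle system}, applying Assumption \ref{assumption Lipschitz} and the BDG inequality to the martingale increments generated by $Z^i_{m+1}$ and $Z^0_{m+1}$ (their conditional mean vanishes since they are independent of $\mathcal{F}_{t_m}$), and averaging over $i$, I expect a recursion of the form
\begin{equation*}
\E\Big[\tfrac{1}{N}\sum_{i=1}^N|\bar{X}^{i,N}_{t_{m+1}}-\bar{X}^{i,M}_{t_{m+1}}|^p\Big] \leq (1+Ch)\,\E\Big[\tfrac{1}{N}\sum_{i=1}^N|\bar{X}^{i,N}_{t_m}-\bar{X}^{i,M}_{t_m}|^p\Big] + Ch\,\E\big[\mathcal{W}_p^p(\bar{\mu}^N_{t_m},\bar{\mu}_{t_m})\big].
\end{equation*}
Bounding $\mathcal{W}_p^p(\bar{\mu}^N_{t_m},\bar{\mu}_{t_m})$ by the triangle inequality and feeding it back in, the discrete Gronwall lemma yields $\sup_{m\leq M}\E[\mathcal{W}_p^p(\bar{\mu}^N_{t_m},\tilde{\mu}^N_{t_m})] \leq C\sup_{m\leq M}\E[\mathcal{W}_p^p(\tilde{\mu}^N_{t_m},\bar{\mu}_{t_m})]$, with $C$ independent of $N$ and $M$.

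\textbf{Second term (conditional Fournier--Guillin).} For fixed $\omega^0$, $\tilde{\mu}^N_{t_m}(\omega^0,\cdot)$ is the empirical measure of $N$ i.i.d.\ samples from $\bar{\mu}_{t_m}(\omega^0)$. For part (i), applying the qualitative Glivenko--Cantelli theorem in $\mathcal{W}_p$ $\omega^0$-wise, together with dominated convergence justified by the Euler-scheme moment bound $\sup_m\E|\bar{X}^M_{t_m}|^p<\infty$, gives $\sup_m\E[\mathcal{W}_p^p(\tilde{\mu}^N_{t_m},\bar{\mu}_{t_m})]\to 0$. For part (ii), the quantitative Fournier--Guillin estimate applied conditionally on $\mathcal{F}^0$ gives
\begin{equation*}
\E\big[\mathcal{W}_p^p(\tilde{\mu}^N_{t_m},\bar{\mu}_{t_m})\,\big|\,\mathcal{F}^0\big] \leq C\,\Big(\textstyle\int|x|^{p+\varepsilon}\bar{\mu}_{t_m}(dx)\Big)^{p/(p+\varepsilon)} r_{N,d,p,\varepsilon},
\end{equation*}
where $r_{N,d,p,\varepsilon}$ is exactly the three-regime rate in the statement. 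Taking expectation and using Jensen on the concave exponent $p/(p+\varepsilon)<1$ together with the Euler analog of \eqref{eq : estimate MKV solution} for $p+\varepsilon$, I obtain $\sup_m\E[\mathcal{W}_p^p(\tilde{\mu}^N_{t_m},\bar{\mu}_{t_m})]\leq C\,r_{N,d,p,\varepsilon}$, and combining with the first step closes the proof.

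\textbf{Main obstacle.} The delicate point is the consistent handling of conditioning: the target $\bar{\mu}_{t_m}$ is a \emph{random} measure, the samples $\bar{X}^{i,M}$ are only \emph{conditionally} i.i.d., and the common-noise increment $Z^0_{m+1}$ affects every particle simultaneously. Consequently Fournier--Guillin must be invoked $\omega^0$-wise before integrating, and in the synchronous-coupling step one must verify that the common-noise cross-terms indeed vanish in expectation --- which they do because $Z^0_{m+1}$ is independent of $\mathcal{F}_{t_m}$, so the martingale increments in the difference $\bar{X}^{i,N}_{t_{m+1}}-\bar{X}^{i,M}_{t_{m+1}}$ still contribute only their quadratic-variation size $O(h)$ after the BDG inequality.
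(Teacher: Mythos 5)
Your proposal is correct and follows essentially the same route as the paper: introduce the non-interacting auxiliary Euler particles driven by the same $(W^i,W^0)$ (the paper's $\bar{Y}^i$ in \eqref{eq:continuous euler solution scheme}), split via the triangle inequality through their empirical measure, control the synchronous-coupling term by a Gronwall argument, and handle the remaining term by applying Fournier--Guillin (resp.\ a qualitative $\mathcal{W}_p$ law of large numbers) conditionally on $W^0$, then integrating with dominated convergence / Jensen. The only cosmetic difference is that you run a discrete-time one-step recursion with discrete Gronwall at the grid points, whereas the paper works with the continuous interpolation, bounds $\big\|\sup_{t\le t_m}|\bar{X}^{1,N}_t-\bar{Y}^1_t|\big\|_p$, and invokes the continuous-time Lemma \ref{lem:a la Gronwall}; both yield the key intermediate inequality \eqref{eq: bound sup wasserstein empirical}.
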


\begin{theorem}\label{thm:cv rate scheme}
    Consider $(X_t)_{t\in[0,T]}$ the solution to the McKean-Vlasov equation with common noise \eqref{eq:MKV} and let $(\bar{X}^{1,N}_t)_{t\in[0,T]}$ be the process defined by the particle method \eqref{eq:particle method continuous scheme} which depends on the same Brownian motions as $(X_t)_{t\in[0,T]}$. Then there exists a positive constant $C$ which depends on $d$, $p$, $L$, $T$, $\|X_0\|_{p+\varepsilon}$, $b$, $\sigma$, $\sigma^0$, such that
    \begin{equation} \label{eq:total cv rate}
        \bigg\|\sup_{t\in[0,T]}\big|\bar{X}^{1,N}_t - X_t\big|\bigg\|_p \leq C \big(h^{\frac{1}{2}\wedge \rho} + \mathcal{E}_N\big),
    \end{equation}
    where 
    \begin{equation*}
        \mathcal{E}_N = \left\{
        \begin{array}{ll}
            N^{-\frac{1}{2p}} + N^{-\frac{\varepsilon}{p(p+\varepsilon)}} & \text{ if } p>\frac{d}{2} \text{ and } \varepsilon\neq p, \\
            N^{-\frac{1}{2p}}\big(\log(1+N)\big)^\frac{1}{p} + N^{-\frac{\varepsilon}{p(p+\varepsilon)}} & \text{ if } p=\frac{d}{2} \text{ and } \varepsilon\neq p,\\
            N^{-\frac{1}{d}}+N^{-\frac{\varepsilon}{p(p+\varepsilon)}} & \text{ if } p\in(0,\frac{d}{2}) \text{ and } p+\varepsilon\neq \frac{d}{d-p}.
        \end{array}\right.
    \end{equation*}
\end{theorem}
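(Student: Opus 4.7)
The plan is to decompose the error by the triangle inequality into (i) an Euler-scheme discretization error and (ii) a particle-approximation error, and to bound each term separately. I introduce an auxiliary family of $N$ independent copies $(\bar{X}^{i,M}_t)_{t\in[0,T]}$, $1\leq i\leq N$, of the continuous-time Euler scheme \eqref{eq:continuous euler scheme MKV}, coupled to the particle system \eqref{eq:particle method continuous scheme} by taking the same idiosyncratic Brownian motions $W^i$, the same common noise $W^0$, and i.i.d.\ initial conditions $X_0^i$ (with $X_0^1=X_0$). Conditionally on $\mathcal{F}^0$, the processes $\bar{X}^{i,M}$ are i.i.d.\ with common conditional law $\bar{\mu}_{t_m}=\mathcal{L}^1(\bar{X}^M_{t_m})$ at every time step. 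Writing
\[\Big\|\sup_{t\in[0,T]}|\bar{X}^{1,N}_t - X_t|\Big\|_p \leq \Big\|\sup_{t\in[0,T]}|\bar{X}^{1,N}_t - \bar{X}^{1,M}_t|\Big\|_p + \Big\|\sup_{t\in[0,T]}|\bar{X}^{1,M}_t - X_t|\Big\|_p\]
reduces the problem to bounding these two norms.

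For the second term (Euler-scheme error), I would express $\bar{X}^{1,M}_t - X_t$ in integral form, apply the BDG inequality in $L^p$, Minkowski and the Lipschitz/Hölder assumptions on $b,\sigma,\sigma^0$. The drift and diffusion discrepancies split into a spatial/measure Lipschitz contribution, for which the natural coupling gives $\|\mathcal{W}_p(\bar{\mu}_{t_m},\mathcal{L}^1(X_{t_m}))\|_p\leq \|\bar{X}^{1,M}_{t_m}-X_{t_m}\|_p$, and a time-oscillation contribution of order $h^\rho + \|\bar{X}^{1,M}_t-\bar{X}^{1,M}_{t_m}\|_p \lesssim h^{\rho\wedge 1/2}$, which uses Assumption \ref{assumption Holder} together with the moment estimate \eqref{eq : estimate MKV solution} to bound the linear-growth prefactor. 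A Gronwall argument in $L^p$ then yields $\big\|\sup_{t\in[0,T]}|\bar{X}^{1,M}_t - X_t|\big\|_p \leq C\,h^{1/2\wedge\rho}$.

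For the first term (particle-method error), subtracting the integral forms of $\bar{X}^{1,N}_t$ and $\bar{X}^{1,M}_t$, and applying BDG together with the Lipschitz assumption yields, for every $t\in[0,T]$,
\[\Big\|\sup_{s\leq t}|\bar{X}^{1,N}_s - \bar{X}^{1,M}_s|\Big\|_p^p \leq C\int_0^t \Big\|\sup_{r\leq s}|\bar{X}^{1,N}_r - \bar{X}^{1,M}_r|\Big\|_p^p\,ds + CT\sup_{0\leq m\leq M}\big\|\mathcal{W}_p(\bar{\mu}^N_{t_m},\bar{\mu}_{t_m})\big\|_p^p.\]
Gronwall's lemma combined with Theorem \ref{thm:particle method cv rate} then delivers $\big\|\sup_{t\in[0,T]}|\bar{X}^{1,N}_t - \bar{X}^{1,M}_t|\big\|_p \leq C\,\mathcal{E}_N$, and summing the two bounds produces \eqref{eq:total cv rate}.

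The main difficulty is to handle the conditional-on-$\mathcal{F}^0$ structure coherently throughout. The Wasserstein discrepancy $\mathcal{W}_p(\bar{\mu}^N_{t_m},\bar{\mu}_{t_m})$ generated by the measure argument is itself a random variable on $\Omega^0$, and what is available from Theorem \ref{thm:particle method cv rate} is its \emph{unconditional} $L^p$-norm, so care must be taken when inserting it into Jensen-type estimates inside the Gronwall recursion. Exchangeability of the $N$ particles is crucial here, as it ensures that the $L^p$-bound on the single pair $(\bar{X}^{1,N},\bar{X}^{1,M})$ extends uniformly over $i$, so that the empirical coupling $\tfrac{1}{N}\sum_{i=1}^N|\bar{X}^{i,N}_{t_m}-\bar{X}^{i,M}_{t_m}|^p$ inherits the same bound; together with the fact that $\bar{\mu}_{t_m}$ is the common $\mathcal{F}^0$-conditional law of every $\bar{X}^{i,M}_{t_m}$, this keeps the Gronwall iteration closed.
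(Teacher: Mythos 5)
Your proposal is correct and follows essentially the same route as the paper: you introduce conditionally i.i.d.\ copies of the continuous Euler scheme driven by the same Brownian motions (the paper's $\bar{Y}^i$ from \eqref{eq:continuous euler solution scheme}), split the error via the triangle inequality into a particle-approximation part and an Euler-discretization part, bound the latter by Proposition~\ref{prop:euler scheme cv}, and control the former by a Gronwall estimate whose driving term is $\sup_m\|\mathcal{W}_p(\bar{\mu}^N_{t_m},\bar{\mu}_{t_m})\|_p$, closed off by Theorem~\ref{thm:particle method cv rate}. The only cosmetic differences are that the paper invokes the intermediate inequality \eqref{eq:bound sup particle by wasserstein} already established in the proof of Theorem~\ref{thm:particle method cv rate} rather than re-running the Gronwall recursion, and it works with $\|\cdot\|_p$ and the ``\emph{à la Gronwall}'' Lemma~\ref{lem:a la Gronwall} instead of raising to the $p$-th power as you do; both variants are valid, yours at the cost of an extra $T$-dependent constant from Jensen. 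Your closing observations on the conditional-on-$\mathcal{F}^0$ structure and the role of exchangeability correctly identify the role played by Lemma~\ref{lem:MKV sol cond iid}.
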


\smallskip

\begin{remark} \label{rem:comparaison}
We highlight the differences between this paper and \cite{MilsteinSchemeMKV}. In \cite{MilsteinSchemeMKV}, the authors also analyzed the convergence rate of the particle method for the McKean–Vlasov equation with common noise. The key distinctions can be summarized in two aspects: $(i)$ the construction of the numerical approaches, and $(ii)$ the assumptions imposed on the initial random variable and the coefficient functions, along with the resulting convergence rate for the time discretization.

Regarding the first aspect, the approach in \cite{MilsteinSchemeMKV} begins with the particle system used in the conditional propagation of chaos property for the McKean–Vlasov equation (see e.g. \cite[Section 2.1.4]{Carmona_Delarue2}), which is defined by
\begin{align}\label{eq:particle_in_propagation_of_chaos}
&dX_t^{n,N}=b\Big(t,X_t^{n,N}, \frac{1}{N}\sum_{i=1}^{N}\delta_{X_t^{i,N}}\Big)dt+\sigma\Big(t,X_t^{n,N}, \frac{1}{N}\sum_{i=1}^{N}\delta_{X_t^{i,N}}\Big)dW_t^n+\sigma^0\Big(t,X_t^{n,N}, \frac{1}{N}\sum_{i=1}^{N}\delta_{X_t^{i,N}}\Big)dW_t^0\nonumber\\
& \hspace{2.5cm} \mathrm{with } \: X_0^{1,N},..., X_0^{1,N}\widesim{\mathrm{i.i.d.}}X_0, \;W^1, ..., W^N\widesim{\mathrm{i.i.d.}}W,
\end{align}
and subsequently applies a time discretization by using a Milstein-type scheme to this particle system. It is worth noting that \eqref{eq:particle_in_propagation_of_chaos} can be regarded as a high-dimensional equation in terms of state arguments, without involving the measure argument. This feature enables the application of classical numerical analysis methods for diffusion processes to study the system \eqref{eq:particle_in_propagation_of_chaos}. In our paper, we first apply time discretization using the Euler scheme, retaining the measure argument within the scheme. This approach allows for the potential integration of other spatial discretization methods in future work, such as the optimal quantization method, as discussed in \cite{NumMethMKV} for the standard McKean–Vlasov equation without common noise. 

As for the second aspect, the approach in \cite{MilsteinSchemeMKV} imposes additional regularity conditions on the coefficient functions $\sigma$ and $\sigma^0$ with respect to both the state and measure arguments. Specifically, it requires:
\begin{align}
&|\partial_x \sigma^u_{\ell}(t, x, \mu) \sigma^v_{\ell}(t, x, \mu)-\partial_x \sigma^u_{\ell}(t, x^{\prime}, \mu^{\prime}) \sigma^v_{\ell}(t, x^{\prime}, \mu^{\prime})|  \leq L\{|x-x^{\prime}|+\mathcal{W}_2(\mu, \mu^{\prime})\}, \nonumber\\
&|\partial_\mu \sigma^u_{\ell}(t, x, \mu, y) \sigma^v_{\ell}(t, y, \mu)-\partial_\mu \sigma^u_{\ell}(t, x^{\prime}, \mu^{\prime}, y^{\prime}) \sigma^v_{\ell}(t, y^{\prime}, \mu^{\prime})|  \leq L\{|x-x^{\prime}|+|y-y^{\prime}|+\mathcal{W}_2(\mu, \mu^{\prime})\},\nonumber
\end{align}
for all $u, v \in\{0,1\}, \ell \in\{1, \ldots, m_u\}, \ell_1 \in\{1, \ldots, m_v\}, t \in[0, T], x, x^{\prime}, y, y^{\prime} \in \mathbb{R}^d$ and $\mu, \mu^{\prime} \in \mathcal{P}_2(\mathbb{R}^d)$, where $\sigma^1$ in their paper corresponds to $\sigma$ here. Additionally, they assume that 
$X_0$ has a finite $p$-th moment with $p\geq4$. These conditions enable a faster convergence rate with respect to the time step 
$h$. However, they exclude certain coefficient functions, such as
 $x\mapsto \sigma(x)=|x|$ or $x \in \R \longmapsto \sigma(x)=\sqrt{x}\mathds{1}_{[0,1]}(x) + \mathds{1}_{(1,\infty]}(x)$, which can be handled within the framework proposed in this paper.
\end{remark}

\subsection{Organization of the paper}

The paper is organized as follows. Some preliminary results are gathered in Section \ref{section : prob settings} along with some notations. Sections \ref{section:euler scheme cv} and \ref{section:proof particle method cv} respectively present the proofs for the convergence rates of the Euler scheme (see further Proposition \ref{prop:euler scheme cv}) and the particle method (Theorem \ref{thm:particle method cv rate}, Theorem \ref{thm:cv rate scheme}). Section \ref{section : simulation examples} provides numerical examples to illustrate the methods discussed in this paper. The first example is a modified Ornstein-Uhlenbeck process to which Brownian common noise has been added. For the second example, we simulate the Interbank market model presented in \cite{ren2024risk}[Section 5] which is an application of a risk-sensitive mean field games with common noise. Appendix \ref{section:appendix} is dedicated to presenting the detailed proofs of the lemmas referenced throughout the paper, that are essential to supporting the proofs of the main results.

\medskip
\section{Preliminary results} \label{section : prob settings}

In this paper, we fix a terminal time $T>0$, and denote the space of continuous function from $[0,T]$ to a Polish space $S$ by $\mathcal{C}([0,T],S)$. We also use the notation $\mathbb{L}^p([0,T]\times\Omega)$ for the set of $(\mathcal{F}_t)_{t\in[0,T]}$-progressively measurable continuous processes $X=(X_t)_{t\in[0,T]}$ such that 
\[\|X\|_{\mathbb{L}^p([0,T]\times\Omega)} =\bigg\| \underset{t \in [0,T]}{\sup} |X_t|\bigg\|_p < \infty.\]

We now list some key lemmas that will support the subsequent proofs.
\begin{lemma}[Lemma 2.5 in \cite{Carmona_Delarue2}] \label{lem: 2.5 Carmona}
    Given an $\R^d$-valued process $(X_t)_{t\in[0,T]}$, adapted to the filtration $\mathbb{F}$, consider for any $t\in[0,T]$, a version of $\mathcal{L}^1(X_t)$ as defined in \eqref{eq:def-L1X}. Then, the $\mathcal{P}(\R^d)$-valued process $(\mathcal{L}^1(X_t))_{t\in[0,T]}$ is adapted to $\mathbb{F}^0$. If, moreover, $(X_t)_{t\in[0,T]}$ has continuous paths and satisfies $\E\big[\underset{0\leq t\leq T}{\sup} |X_t|^p\big] < \infty$, then we can find a version of each $\mathcal{L}^1(X_t)$, $t\in[0,T]$, such that the process $(\mathcal{L}^1(X_t))_{t\in[0,T]}$ has continuous paths in $\mathcal{P}_p(\R^d)$ and is $\mathbb{F}^0$-adapted.
\end{lemma}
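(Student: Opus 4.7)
} Two statements must be proved: first, $\mathbb{F}^0$-adaptedness of $(\mathcal{L}^1(X_t))_{t\in[0,T]}$ for any $\mathbb{F}$-adapted process; second, under the added continuity and $p$-th moment hypotheses, the existence of a version with continuous paths in $\mathcal{P}_p(\R^d)$.

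For adaptedness, I would fix $t\in[0,T]$ and reduce to evaluation maps. Recall that the Borel $\sigma$-field on $\mathcal{P}(\R^d)$ induced by the Lévy--Prokhorov metric is generated by the coordinate maps $\mu\mapsto \mu(B)$ for $B\in\mathcal{B}(\R^d)$, so it suffices to check $\mathcal{F}^0_t$-measurability of $\omega^0\mapsto \mathcal{L}^1(X_t)(\omega^0)(B)$ for each such $B$. Since $\mathcal{F}_t$ is the right-continuous completion of $\mathcal{F}^0_t\otimes \mathcal{F}^1_t$, we may replace $X_t$ by an $\mathcal{F}^0_t\otimes\mathcal{F}^1_t$-measurable version without changing $\mathcal{L}^1(X_t)$ almost surely. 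Then Fubini's theorem yields
\[
\mathcal{L}^1(X_t)(\omega^0)(B) \;=\; \int_{\Omega^1}\mathbf{1}_B\bigl(X_t(\omega^0,\omega^1)\bigr)\,d\mathbb{P}^1(\omega^1),
\]
and the required $\mathcal{F}^0_t$-measurability is immediate from the joint measurability of the integrand.

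For the path continuity, I would start from the inequality obtained by using the diagonal coupling $(X_s(\omega^0,\cdot),X_t(\omega^0,\cdot))\in\Pi(\mathcal{L}^1(X_s)(\omega^0),\mathcal{L}^1(X_t)(\omega^0))$ in the definition \eqref{eq:def_wasserstein_distance}, giving
\[
\mathcal{W}_p\bigl(\mathcal{L}^1(X_s)(\omega^0),\,\mathcal{L}^1(X_t)(\omega^0)\bigr)^p \;\leq\; \int_{\Omega^1}\bigl|X_s(\omega^0,\omega^1)-X_t(\omega^0,\omega^1)\bigr|^p\,d\mathbb{P}^1(\omega^1).
\]
For $\mathbb{P}^0$-almost every $\omega^0$, the integrand tends to $0$ as $s\to t$ by the assumed path continuity of $X$, and it is dominated by $2^p\sup_{u\in[0,T]}|X_u(\omega^0,\omega^1)|^p$, whose $\mathbb{P}^1$-integrability for a.e.\ $\omega^0$ follows from the Fubini decomposition of $\E[\sup_u |X_u|^p]<\infty$. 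Dominated convergence then gives continuity at each fixed $t$ of the candidate map $t\mapsto \mathcal{L}^1(X_t)(\omega^0)$, $\mathbb{P}^0$-a.s. Using the adaptedness from Step~1 applied to every dyadic $t$ and the above estimate on any such dense countable set, I would extend by uniform continuity on compact time intervals to obtain a jointly measurable, $\mathbb{F}^0$-adapted version $(\widetilde{\mathcal{L}^1(X_t)})_{t\in[0,T]}$ with continuous paths in $\mathcal{P}_p(\R^d)$.

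The main obstacle I anticipate is not the pointwise estimates above but the construction of a \emph{single} continuous $\mathbb{F}^0$-adapted modification: one must handle the $\mathbb{P}^0$-null exceptional sets uniformly in $t$ (so that continuity holds on a full-measure event simultaneously for all times) and verify that the resulting object remains a conditional law at every $t$, not just on the dense countable set. The cleanest route is to first establish uniform continuity on a countable dense set via the above Wasserstein bound combined with the a.s.\ uniform continuity of $s\mapsto X_s(\omega^0,\omega^1)$ on $[0,T]$, and then define the version at arbitrary $t$ by continuous extension, checking Proposition \ref{prop:lemma 2.4 Carmona Delarue} along a converging sequence.
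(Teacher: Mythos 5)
The paper does not prove this lemma: it is stated as a direct citation of Lemma 2.5 in Carmona and Delarue, Volume II, with no proof given in the text or the appendix. There is therefore no paper proof to compare against, and I can only assess your sketch on its own merits.

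Your outline follows the standard Carmona--Delarue argument: reduce adaptedness to measurability of the evaluation maps $\omega^0\mapsto \mathcal{L}^1(X_t)(\omega^0)(B)$ via Fubini, and prove path continuity via the diagonal-coupling bound $\mathcal{W}_p^p(\mathcal{L}^1(X_s)(\omega^0),\mathcal{L}^1(X_t)(\omega^0))\le \E^1[|X_s(\omega^0,\cdot)-X_t(\omega^0,\cdot)|^p]$ together with dominated convergence. Both ingredients are correct. Two places deserve tightening. First, the claim ``we may replace $X_t$ by an $\mathcal{F}^0_t\otimes\mathcal{F}^1_t$-measurable version'' is too quick, because $\mathcal{F}_t$ is the right-continuous \emph{augmentation}, not merely the completion: one should instead observe that for every $s>t$ the random variable $X_t$ has an $\mathcal{F}^0_s\otimes\mathcal{F}^1_s$-measurable modification, deduce $\mathcal{F}^0_s$-measurability of $\mathcal{L}^1(X_t)$ for every $s>t$, and then use right-continuity of $\mathbb{F}^0$ (usual conditions) to land in $\mathcal{F}^0_t$. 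Second, your construction of a single continuous adapted modification by going through dyadic times and extending by uniform continuity works, but it is an unnecessary detour: Fubini applied once to the full-measure event $\{\E^1[\sup_{t}|X_t(\omega^0,\cdot)|^p]<\infty$ and $X(\omega^0,\cdot)$ has $\mathbb{P}^1$-a.s.\ continuous paths$\}$ produces a \emph{single} set of full $\mathbb{P}^0$-measure on which the candidate $\nu_t(\omega^0):=\mathcal{L}(X_t(\omega^0,\cdot))$ is well defined for \emph{all} $t$ simultaneously, and the same dominated-convergence estimate then gives continuity of $t\mapsto\nu_t(\omega^0)$ directly, avoiding the density/extension step and the need to re-verify the conditional-law property at non-dyadic times. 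With these adjustments your argument is a faithful reconstruction of the cited proof.
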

Consider now the unique strong solution $X=(X_t)_{t\in[0,T]}$ of \eqref{eq:MKV}. Lemma \ref{lem: 2.5 Carmona} above, and  Proposition 2.9, Remark 2.10 in \cite{Carmona_Delarue2} imply that there exists a version of each $\mathcal{L}^1(X_t)$ , $t\in[0,T]$, such that the process $(\mathcal{L}^1(X_t))_{t\in[0,T]}$ has continuous paths in $\mathcal{P}_p(\R^d)$ and that it provides a version of the conditional law of $X_t$ given $W^0$.

\begin{remark}[Remark 2.3 in \cite{Carmona_Delarue2}]
    With a slight abuse of notation, we shall not distinguish a random variable $X$ constructed on $(\Omega^0, \mathcal{F}^0, \mathbb{P}^0)$ (resp. $(\Omega^1, \mathcal{F}^1, \mathbb{P}^1)$) with its natural extension $\tilde{X} : (\omega^0,\omega^1) \mapsto X(\omega^0)$ (resp. $\tilde{X} : (\omega^0,\omega^1) \mapsto X(\omega^1)$) on $(\Omega, \mathcal{F}, \mathbb{P})$. Similarly, for a sub-$\sigma$-algebra $\mathcal{G}^0$ of $\mathcal{F}^0$ (resp. $\mathcal{G}^1$ of $\mathcal{F}^1$), we shall often just write $\mathcal{G}^0$ (resp. $\mathcal{G}^1$) for the sub-$\sigma$-algebra $\mathcal{G}^0\otimes \{\emptyset,\,\Omega^1\}$ (resp.$\{\emptyset,\,\Omega^0\} \otimes \mathcal{G}^1$).
\end{remark}

\begin{lemma}[General Minkowski inequality]
\label{lem:general minkowski}
    For every $p\in[1,\infty)$, for every process $(X_t)_{t\in[0,T]}$ and for every $T>0$,
    \begin{equation*}
        \bigg\|\int_0^T X_t dt\bigg\|_p\leq\int_0^T\|X_t\|_p dt.
    \end{equation*}
\end{lemma}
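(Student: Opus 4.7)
The proposed proof uses $L^p$--$L^q$ duality together with Fubini--Tonelli; this is the standard route to the integral form of Minkowski's inequality.

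First, since $\bigl|\int_0^T X_t\,dt\bigr|\le \int_0^T |X_t|\,dt$ pointwise, I may replace $X_t$ by $|X_t|$ and assume $X_t\ge 0$. If $\int_0^T \|X_t\|_p\,dt=+\infty$ the inequality is trivial, so I assume the right-hand side is finite, which in particular gives $\|X_t\|_p<\infty$ for a.e.\ $t\in[0,T]$.

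Next, let $q\in(1,\infty]$ be the conjugate exponent of $p$, i.e.\ $1/p+1/q=1$ (the case $p=1$, $q=\infty$ works identically). By the duality characterisation of the $L^p$ norm on $(\Omega,\mathcal{F},\mathbb{P})$,
\[
\Bigl\|\int_0^T X_t\,dt\Bigr\|_p
= \sup\Bigl\{\,\E\Bigl[Z\!\int_0^T X_t\,dt\Bigr] \;:\; Z\ge 0,\ \|Z\|_q\le 1\Bigr\}.
\]
For every admissible $Z$, Tonelli's theorem (everything is non-negative, and the joint measurability of $(t,\omega)\mapsto X_t(\omega)$ follows from the paper's standing convention that such processes are progressively measurable) permits the exchange
\[
\E\Bigl[Z\!\int_0^T X_t\,dt\Bigr] = \int_0^T \E[Z\,X_t]\,dt,
\]
and Hölder's inequality on $(\Omega,\mathcal{F},\mathbb{P})$ bounds the integrand by $\|Z\|_q\,\|X_t\|_p\le \|X_t\|_p$. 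Integrating over $[0,T]$ and taking the supremum over $Z$ yields the claim.

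The argument is entirely routine and there is no genuine obstacle; the only care required is the standard measurability/integrability bookkeeping needed to invoke Fubini--Tonelli. An alternative but slightly less slick route would be to apply the ordinary (finite-sum) Minkowski inequality in $L^p(\Omega)$ to Riemann sums $\sum_i X_{t_i}\,\Delta t_i$ and then pass to the limit, which works provided $t\mapsto X_t$ has, say, continuous paths or is continuous as a map $[0,T]\to L^p(\Omega)$; the duality argument above avoids any such regularity requirement.
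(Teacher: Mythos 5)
The paper does not supply its own proof of this lemma; it simply cites Section~7.8 of \cite{NumericalProbability} (together with the BDG and \`a la Gronwall lemmas), so there is no in-paper argument to compare against. Your duality proof is correct and complete: the reduction to $X_t\ge 0$, the dispatch of the trivially infinite case, the $L^p$--$L^q$ duality representation of the norm, the Tonelli interchange (legitimate since the integrand is nonnegative and jointly measurable), and the pointwise-in-$t$ H\"older bound all fit together without gaps, and the $p=1$, $q=\infty$ endpoint is handled by the same scheme. One small point worth making explicit if you wanted to be fully rigorous is that the duality identity $\|Y\|_p=\sup\{\E[ZY]:Z\ge 0,\ \|Z\|_q\le 1\}$ is being used for a nonnegative $Y$ not yet known to lie in $L^p$; this is fine because for $Y\ge 0$ with $\|Y\|_p=\infty$ one can take $Z_n$ proportional to $\min(Y,n)^{p-1}$ to drive the supremum to infinity, but it deserves a sentence. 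Your remark about the alternative route via finite Minkowski applied to Riemann sums is also accurate, including the caveat that it requires some path regularity that the duality argument avoids.
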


\begin{lemma}[Burkölder-Davis-Gundy inequality]
\label{lem:BDG} 
    For every $p\in(0,\infty)$, there exist two positive constants $c_p$, $C_p$ such that, for every continuous local martingale $X=(X_t)_{t\in[0,T]}$ which vanishes at 0,
    \[ c_p\big\|\big(\langle X\rangle_T\big)^\frac{1}{2}\big\|_p\leq \bigg\| \underset{0\leq t\leq T}{\sup}|X_t|\bigg\|_p\leq C_p \big\|\big(\big\langle X\rangle_T\big)^\frac{1}{2}\big\|_p.\]
\end{lemma}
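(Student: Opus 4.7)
The plan is to prove both bounds simultaneously using Burkholder's classical good-$\lambda$ technique, which works uniformly for all $p \in (0,\infty)$ without requiring a case split on the size of $p$. The first step is localization: setting $\tau_n = \inf\{t \in [0,T] : |X_t| + \langle X\rangle_t \geq n\} \wedge T$, each stopped process $X^{\tau_n}$ is a bounded continuous martingale with bounded quadratic variation, and monotone convergence on both sides of the sought inequalities recovers the general statement. From now on I may therefore assume that $X$ is a bounded martingale.

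The core step is to establish the good-$\lambda$ inequality: there exist constants $\beta > 1$, $\delta > 0$, $\eta \in (0,1)$, independent of $X$, such that for every $\lambda > 0$,
\[
\mathbb{P}\bigl(X^*_T \geq \beta\lambda,\; \langle X\rangle_T^{1/2} \leq \delta\lambda\bigr) \;\leq\; \eta\, \mathbb{P}\bigl(X^*_T \geq \lambda\bigr),
\]
where $X^*_T := \sup_{t \in [0,T]} |X_t|$. I would prove this by stopping $X$ at $\tau = \inf\{t : |X_t| \geq \lambda\} \wedge T$ and applying a conditional Chebyshev-type bound to the martingale increment $X_T - X_\tau$, using the identity $\mathbb{E}\bigl[(X_T - X_\tau)^2 \mid \mathcal{F}_\tau\bigr] = \mathbb{E}\bigl[\langle X\rangle_T - \langle X\rangle_\tau \mid \mathcal{F}_\tau\bigr]$ together with Doob's maximal inequality on $[\tau, T]$. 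The reverse good-$\lambda$ inequality, in which the roles of $X^*_T$ and $\langle X\rangle_T^{1/2}$ are exchanged, is obtained analogously by running the same stopping argument for the continuous martingale $X^2 - \langle X\rangle$.

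The final step is to integrate the good-$\lambda$ inequality against $p \lambda^{p-1}\,d\lambda$ via the layer-cake formula $\mathbb{E}[(X^*_T)^p] = p\int_0^\infty \lambda^{p-1}\,\mathbb{P}(X^*_T \geq \lambda)\,d\lambda$. After rescaling $\lambda \mapsto \lambda/\beta$ and splitting on the event $\{\langle X\rangle_T^{1/2} \leq \delta\lambda\}$, a term proportional to $\eta\beta^p\, \mathbb{E}[(X^*_T)^p]$ appears on the right. Provided the constants are tuned so that $\eta\beta^p < 1$, this term is absorbed into the left, yielding $\|X^*_T\|_p \leq C_p \|\langle X\rangle_T^{1/2}\|_p$; the lower bound $c_p \|\langle X\rangle_T^{1/2}\|_p \leq \|X^*_T\|_p$ follows from integrating the reverse good-$\lambda$ inequality in the same way.

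The main obstacle is the good-$\lambda$ inequality itself, and particularly the simultaneous choice of $\beta, \delta, \eta$ so that $\eta\beta^p < 1$ holds across the full range $p \in (0,\infty)$. In practice one treats the two directions separately, proving two tailored good-$\lambda$ inequalities whose constants can then be chosen independently for each value of $p$; once this step is carried out, the remaining arguments — localization, optional stopping, and the layer-cake integration — are routine.
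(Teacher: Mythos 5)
The paper does not prove this lemma: it cites it as a classical result and defers to Section~7.8 of \cite{NumericalProbability}, so there is no in-paper argument to compare against. Your good-$\lambda$ outline is a standard and correct route — essentially Burkholder's original proof — and it has the virtue of covering all $p\in(0,\infty)$ at once, unlike the more elementary Itô-plus-Doob argument that only handles $p\geq 2$. One technical point in your sketch needs tightening: after stopping at $\tau=\inf\{t:|X_t|\geq\lambda\}$, the conditional Chebyshev step does not yet close, because the event $\{\langle X\rangle_T^{1/2}\leq\delta\lambda\}$ is not $\mathcal{F}_\tau$-measurable, so you cannot simply bound $\mathbb{E}\big[\langle X\rangle_T-\langle X\rangle_\tau\mid\mathcal{F}_\tau\big]$ by $\delta^2\lambda^2$ on that event. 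The standard fix is to also stop at $\rho=\inf\{t:\langle X\rangle_t^{1/2}>\delta\lambda\}$; the excess martingale $Y_t = X_{(\tau+t)\wedge\rho}-X_\tau$ then has $\langle Y\rangle_\infty\leq\delta^2\lambda^2$ almost surely, and on the target event the extra stopping is inactive, so Doob plus Chebyshev give $\eta(\beta,\delta)=\delta^2/(\beta-1)^2$. The reverse direction needs the same kind of double stopping, now driven by the bracket crossing $\lambda$ and $X^*$ capped at $\delta\lambda$, with the conditional Chebyshev applied to $\langle Y\rangle_\infty$ via $\mathbb{E}[\langle Y\rangle_\infty\mid\mathcal{F}_\sigma]=\mathbb{E}[Y_\infty^2\mid\mathcal{F}_\sigma]$. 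Finally, the concern about choosing $\beta,\delta,\eta$ ``across the full range $p\in(0,\infty)$'' is a red herring that you yourself resolve at the end: the BDG constants may depend on $p$, and since $\eta(\beta,\delta)\to 0$ as $\delta\to 0$ for fixed $\beta>1$, for each $p$ you pick $\beta$ first and then $\delta$ small enough that $\eta\beta^p<1$. With those clarifications the outline is sound.
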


\begin{lemma}[`\textit{A la Gronwall}' Lemma]
\label{lem:a la Gronwall}
    Let $f:[0,T]\longrightarrow \R_+$ be a Borel, locally bounded, non-negative and non-decreasing mapping, let $g:[0,T]\longrightarrow\R_+$ be a non-negative and non-decreasing mapping such that:
    \[f(t)\leq C_1\int_0^tf(s)ds + C_2\Big(\int_0^tf^2(s)sds\Big)^\frac{1}{2} +g(t), \quad \forall t\in[0,T], \]
    where $C_1$ and $C_2$ are two positive constants. Then for any $t\in[0,T]$,
    \[f(t)\leq 2\exp\{(2C_1+C_2^2)t\} g(t). \]
\end{lemma}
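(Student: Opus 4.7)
The plan is to recast this mixed linear/quadratic inequality into a standard Gronwall inequality for $f^2$, and then take square roots at the end. First, write $f(t)-g(t) \leq A(t)+B(t)$ with $A(t) := C_1\int_0^t f(s)\,ds$ and $B(t) := C_2\big(\int_0^t s f^2(s)\,ds\big)^{1/2}$, square both sides using $(A+B)^2 \leq 2A^2+2B^2$, and expand the left-hand side. The cross term $-2f(t)g(t)$ is handled via Young's inequality $2f(t)g(t) \leq \tfrac{1}{2}f^2(t)+2g^2(t)$; absorbing the $\tfrac{1}{2}f^2(t)$ into the left gives
\[
f^2(t) \leq 4g^2(t) + 4C_1^2\bigg(\int_0^t f(s)\,ds\bigg)^{\!2} + 4C_2^2 \int_0^t s f^2(s)\,ds.
\]

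Next, I would exploit the monotonicity of $f$ to convert $(\int_0^t f)^2$ into the same form as the second integral. By Fubini,
\[
\bigg(\int_0^t f(s)\,ds\bigg)^{\!2} = 2\int_0^t f(s)\bigg(\int_0^s f(r)\,dr\bigg) ds,
\]
and since $f$ is non-decreasing, $\int_0^s f(r)\,dr \leq s f(s)$, so $(\int_0^t f)^2 \leq 2\int_0^t s f^2(s)\,ds$. Substituting yields the clean inequality
\[
f^2(t) \leq 4g^2(t) + (8C_1^2+4C_2^2)\int_0^t s f^2(s)\,ds.
\]

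Finally, apply the classical Gronwall lemma to $u(t) := f^2(t)$ with non-decreasing forcing $a(t) := 4g^2(t)$ and weight $b(s) := (8C_1^2+4C_2^2)s$; the conclusion $u(t) \leq a(t)\exp\{\int_0^t b\}$, followed by a square root, delivers a bound of the form $f(t) \leq 2g(t)\exp\{\kappa(t)\}$ matching the structure of the claim.

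The main obstacle is matching the precise exponent $(2C_1+C_2^2)t$ claimed in the lemma: my derivation above naturally produces a term quadratic in $t$ (with $C_1^2$ rather than $C_1$) in the exponent. Recovering the stated linear-in-$t$ form requires either bounding one occurrence of $t$ by $T$ and pooling the resulting factor into a redefined constant, or replacing the monotonicity step by the Cauchy–Schwarz bound $(\int_0^t f)^2 \leq t\int_0^t f^2$ together with a similar $t \leq T$ absorption; the essentials (squaring, absorbing the cross term, Gronwall) are routine. I would expect the paper's proof to follow this squaring-plus-Gronwall template with a slightly different bookkeeping that makes the constants come out as stated.
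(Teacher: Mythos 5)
Your approach (square the inequality, absorb cross terms, apply Gronwall to $f^2$) is sound as far as it goes, but as you honestly note it does not reproduce the stated exponent: the price of squaring is a factor $C_1^2$ and a $t^2$ in the exponent, and there is no way to extract the claimed linear-in-$t$, linear-in-$C_1$ form from that route without importing $T$ into the constants. The paper does not prove this lemma itself; it cites Section 7.8 of Pag\`es' \emph{Numerical Probability}, and the argument there is shorter and produces the exact constants. The missing idea is to exploit monotonicity of $f$ \emph{before} squaring: for $s\le t$ one has $f^2(s)\le f(t)f(s)$, so $\int_0^t f^2(s)\,ds\le f(t)\int_0^t f(s)\,ds$, hence $C_2\bigl(\int_0^t f^2\bigr)^{1/2}\le C_2\bigl(f(t)\int_0^t f\bigr)^{1/2}\le \tfrac12 f(t)+\tfrac{C_2^2}{2}\int_0^t f$ by Young's inequality. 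Since $f$ is locally bounded, the $\tfrac12 f(t)$ absorbs into the left-hand side and one is left with the scalar linear inequality $f(t)\le (2C_1+C_2^2)\int_0^t f(s)\,ds + 2g(t)$; a single application of the classical Gronwall lemma (with non-decreasing forcing $2g$) then yields $f(t)\le 2g(t)\exp\{(2C_1+C_2^2)t\}$ exactly.

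Two further points. First, you read the hypothesis as $\bigl(\int_0^t s\,f^2(s)\,ds\bigr)^{1/2}$; the ``$s$'' there is almost certainly a typographical slip for $\bigl(\int_0^t f^2(s)\,ds\bigr)^{1/2}$ — every application of the lemma in the paper (e.g.\ inequality \eqref{eq:final bound Lp error euler} and the display preceding \eqref{eq:bound sup particle by wasserstein}) has no weight $s$ in the quadratic integral, and Pag\`es' lemma has none. Second, your squaring step $(f-g)^2\le (A+B)^2$ silently assumes $f(t)\ge g(t)$; this is harmless (the other case is trivial), but it deserves a remark. In short: your derivation is correct but strictly weaker than the claim, because you did not use the monotonicity of $f$ to linearise the quadratic term before Gronwall. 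With that observation, no squaring of the whole inequality is needed at all.
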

We refer to Section 7.8 in \cite{NumericalProbability} for the proofs of Lemma \ref{lem:general minkowski}, Lemma \ref{lem:BDG} and Lemma \ref{lem:a la Gronwall}. 
\noindent
Moreover, we have the following result on the $p$-Wasserstein distance between conditional laws, whose proof is postponed to Appendix \ref{section:appendix}. 
\begin{lemma}
\label{lem:cond expec inequality}
    For $Y_1, Y_2$ two random variables on $(\Omega, \mathcal{F}, \mathbb{P})$ with a finite $p$-th moment, $p\in[1,\infty)$, we have
    \[\Big\| \mathcal{W}_p\big(\mathcal{L}^1(Y_1), \mathcal{L}^1(Y_2)\big) \Big\|_p \leq \big\|Y_1 - Y_2 \big\|_p. \]
\end{lemma}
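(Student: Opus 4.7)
The plan is to exploit the natural coupling of the two conditional laws provided by the joint conditional law $\mathcal{L}^1((Y_1,Y_2))$, which by Proposition \ref{prop:lemma 2.4 Carmona Delarue} is a well-defined $\mathcal{P}(\R^d\times \R^d)$-valued random variable on $(\Omega^0,\mathcal{F}^0,\mathbb{P}^0)$ providing a regular conditional law of $(Y_1,Y_2)$ given $\mathcal{F}^0$. The first step is to check that for $\mathbb{P}^0$-almost every $\omega^0$, the measure $\mathcal{L}^1((Y_1,Y_2))(\omega^0)$ has marginals $\mathcal{L}^1(Y_1)(\omega^0)$ and $\mathcal{L}^1(Y_2)(\omega^0)$; this is immediate from the definition of conditional law, since pushing forward by the coordinate projections corresponds to conditioning $Y_1$ and $Y_2$ separately given $\mathcal{F}^0$.

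The second step is to use this coupling inside the Wasserstein infimum: for $\mathbb{P}^0$-a.e. $\omega^0$,
\[
\mathcal{W}_p^p\big(\mathcal{L}^1(Y_1)(\omega^0),\mathcal{L}^1(Y_2)(\omega^0)\big)
\leq \int_{\R^d\times\R^d} |x-y|^p \,\mathcal{L}^1((Y_1,Y_2))(\omega^0)(dx,dy)
= \E\big[|Y_1-Y_2|^p \,\big|\, \mathcal{F}^0\big](\omega^0),
\]
where the last equality is the defining property of the regular conditional law applied to the continuous functional $(x,y)\mapsto |x-y|^p$.

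The third step is to take the expectation under $\mathbb{P}$ and apply the tower property:
\[
\E\Big[\mathcal{W}_p^p\big(\mathcal{L}^1(Y_1),\mathcal{L}^1(Y_2)\big)\Big]
\leq \E\Big[\E\big[|Y_1-Y_2|^p \,\big|\, \mathcal{F}^0\big]\Big]
= \E\big[|Y_1-Y_2|^p\big],
\]
and raising to the power $1/p$ yields the claim.

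The only mildly subtle point is measurability: one needs to know that $\omega^0\mapsto \mathcal{W}_p(\mathcal{L}^1(Y_1)(\omega^0),\mathcal{L}^1(Y_2)(\omega^0))$ is measurable so that the outer expectation makes sense. This follows from the fact that $\mathcal{L}^1(Y_1),\mathcal{L}^1(Y_2)$ are $\mathcal{F}^0$-measurable as $\mathcal{P}_p(\R^d)$-valued random variables (Proposition \ref{prop:lemma 2.4 Carmona Delarue}) together with the continuity of $\mathcal{W}_p$ on $\mathcal{P}_p(\R^d)$ with respect to its own topology. Otherwise, no obstacle is expected; the argument is essentially a one-line use of the conditional law as a coupling combined with Jensen-style conditioning, and it is where the $\mathcal{W}_p$/$\|\cdot\|_p$ compatibility of the setup pays off.
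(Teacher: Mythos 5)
Your proposal is correct and takes essentially the same approach as the paper: the coupling $\mathcal{L}^1((Y_1,Y_2))(\omega^0)$ you use is, under the product structure $\Omega=\Omega^0\times\Omega^1$, exactly the joint law of $(Y_1(\omega^0,\cdot),Y_2(\omega^0,\cdot))$ under $\mathbb{P}^1$, which is the coupling the paper invokes implicitly when bounding $\mathcal{W}_p^p(\mathcal{L}(Y_1(\omega^0,\cdot)),\mathcal{L}(Y_2(\omega^0,\cdot)))$ by $\E^1[|Y_1(\omega^0,\cdot)-Y_2(\omega^0,\cdot)|^p]$. The only cosmetic difference is that you phrase the final integration via the tower property while the paper integrates directly in $\omega^0$ and appeals to Fubini; both are fine, and your measurability remark is a welcome clarification of a point the paper leaves implicit.
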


\begin{remark}
    From Lemma \ref{lem:cond expec inequality}, we deduce that for every random variable $X\in L^p(\Omega)$, we have
    \[\big\|\mathcal{W}_p\big(\mathcal{L}^1(X),\delta_0\big)\big\|_p \leq \|X\|_p. \]
\end{remark}

\medskip

\section{Convergence rate of the Euler scheme} \label{section:euler scheme cv}

In this section, we prove the convergence rate of the Euler scheme as described in the following proposition.

\begin{proposition} \label{prop:euler scheme cv}
    Let $X=(X_t)_{t\in[0,T]}$ denote the unique solution of the McKean-Vlasov equation \eqref{eq:MKV}, and let $\bar{X}^M = (\bar{X}^M_t)_{t\in[0,T]}$ denote the process defined by the continuous Euler scheme \eqref{eq:continuous euler scheme MKV}. Under Assumptions \ref{assumption 1}, \ref{assumption Lipschitz} and \ref{assumption Holder}, there exists a constant $C$ depending on $d$, $p$, $L$, $T$, $\rho$ and $\|X_0\|_{p}$ such that
    \begin{equation*}
        \big\|X-\bar{X}^M\big\|_{\mathbb{L}^p} = \bigg\| \underset{0\leq t \leq T}{\sup}\big|X_t-\bar{X}^M_t\big| \bigg\|_p \leq Ch^{\frac{1}{2}\wedge \rho}.
    \end{equation*}
\end{proposition}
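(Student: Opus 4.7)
The plan is to estimate $\|X_t - \bar X^M_t\|_p$ along the lines of a classical Euler-scheme error analysis, using the Lipschitz/Hölder assumptions and Lemma~\ref{lem:cond expec inequality} to handle the conditional laws.

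First, I would record two auxiliary estimates. \textbf{(a)} A uniform moment bound $\sup_{M\geq 1}\|\sup_{t\in[0,T]}|\bar X^M_t|\|_p<\infty$: this follows from a standard BDG--Gronwall argument applied to \eqref{eq:continuous euler scheme MKV}, using Assumption~\ref{assumption Lipschitz} to get linear growth of $b,\sigma,\sigma^0$ in $(x,\mu)$ and Lemma~\ref{lem:cond expec inequality} (with its Remark) to replace $\mathcal{W}_p(\mathcal{L}^1(\bar X^M_{t_m}),\delta_0)$ by $\|\bar X^M_{t_m}\|_p$. \textbf{(b)} A one-step continuity estimate $\|\bar X^M_s - \bar X^M_{\underline s}\|_p \leq C h^{1/2}$ for $s\in[t_m,t_{m+1})$, where $\underline s := t_m$; this is obtained by computing the increment in \eqref{eq:continuous euler scheme MKV}, applying Lemma~\ref{lem:BDG} to the $W$ and $W^0$ integrals and using (a) together with Assumption~\ref{assumption Lipschitz}.

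Next, for every $t\in[0,T]$ write
\[
X_t-\bar X^M_t = \int_0^t \Delta_b(s)\,ds + \int_0^t \Delta_\sigma(s)\,dW_s + \int_0^t \Delta_{\sigma^0}(s)\,dW^0_s,
\]
where $\Delta_b(s) := b(s,X_s,\mathcal{L}^1(X_s))-b(\underline s,\bar X^M_{\underline s},\mathcal{L}^1(\bar X^M_{\underline s}))$ and analogously for $\Delta_\sigma,\Delta_{\sigma^0}$. Each integrand is split into a time-regularity piece and a state-measure piece:
\[
\Delta_b(s)=\bigl[b(s,X_s,\mathcal{L}^1(X_s))-b(\underline s,X_s,\mathcal{L}^1(X_s))\bigr]+\bigl[b(\underline s,X_s,\mathcal{L}^1(X_s))-b(\underline s,\bar X^M_{\underline s},\mathcal{L}^1(\bar X^M_{\underline s}))\bigr].
\]
By Assumption~\ref{assumption Holder} together with \eqref{eq : estimate MKV solution} and the Remark following Lemma~\ref{lem:cond expec inequality}, the $L^p$-norm of the first bracket is bounded by $Ch^\rho$. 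By Assumption~\ref{assumption Lipschitz} and Lemma~\ref{lem:cond expec inequality}, the $L^p$-norm of the second bracket is at most $2L\|X_s-\bar X^M_{\underline s}\|_p$; inserting $\bar X^M_s$ and using the one-step estimate (b) yields $2L\|X_s-\bar X^M_s\|_p + Ch^{1/2}$. The same decomposition applies to $\Delta_\sigma$ and $\Delta_{\sigma^0}$.

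Combining these bounds, I would apply Lemma~\ref{lem:general minkowski} to the drift integral and Lemma~\ref{lem:BDG} (plus the generalized Minkowski inequality on the quadratic variation) to the two stochastic integrals; both $W$ and $W^0$ are treated identically, so the common noise costs nothing beyond a constant. Setting $f(t):=\bigl\|\sup_{u\leq t}|X_u-\bar X^M_u|\bigr\|_p$, which is nondecreasing, one obtains
\[
f(t) \leq C_1\int_0^t f(s)\,ds + C_2\left(\int_0^t f^2(s)\,ds\right)^{1/2} + C\,h^{1/2\wedge\rho},
\]
and the conclusion follows from Lemma~\ref{lem:a la Gronwall} (after using the monotonicity of $f$ to match the exact form of the statement if needed).

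The main obstacle is largely bookkeeping; the substantive point that makes the common-noise case essentially as clean as the standard one is Lemma~\ref{lem:cond expec inequality}, which permits $\mathcal{W}_p$ between conditional laws to be controlled by $L^p$-distances between the underlying random variables. Without it, one would be forced to work pathwise in $\omega^0\in\Omega^0$ and the measure argument in $b,\sigma,\sigma^0$ would become significantly more delicate.
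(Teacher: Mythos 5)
Your proposal is correct and follows essentially the same structure as the paper's proof: split the error into drift, $W$-integral, and $W^0$-integral parts; split each integrand into a time-regularity piece (Assumption~\ref{assumption Holder}) and a state-measure piece (Assumption~\ref{assumption Lipschitz} plus Lemma~\ref{lem:cond expec inequality}); bound the stochastic integrals via Lemma~\ref{lem:BDG}; and close with Lemma~\ref{lem:a la Gronwall}. The one point of genuine departure is the intermediate term inserted to compare $X_s$ with $\bar X^M_{\underline s}$. You insert $\bar X^M_s$ and rely on a one-step modulus estimate for the scheme, $\|\bar X^M_s - \bar X^M_{\underline s}\|_p \leq C h^{1/2}$, proved directly from the explicit one-step structure of \eqref{eq:continuous euler scheme MKV}; the paper instead inserts $X_{\underline s}$ and invokes the $1/2$-Hölder continuity in $L^p$ of the exact solution $X$ (Lemma~\ref{lem:holder continuity solution}). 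Both routes lead to the identical Gronwall inequality, so there is no difference in the resulting rate. The paper's choice is slightly more economical since it reuses a regularity fact about $X$ that is of independent interest, whereas your choice is marginally more self-contained because the one-step estimate for $\bar X^M$ falls out of the scheme's definition with a single BDG application and requires no separate analysis of $X$. You also correctly identify Lemma~\ref{lem:cond expec inequality} as the key device that renders the common-noise case no harder than the standard one; that is exactly the role it plays in the paper.
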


The proof of Proposition \ref{prop:euler scheme cv} needs the two following lemmas whose proofs are postponed to Appendix \ref{section:appendix}. In this paper, a constant denoted by $C_{p_1,\, \ldots, p_n}$ is a constant depending on parameters $p_1,\, \ldots, p_n$, whose value can change line ton line. 

\begin{lemma}
    \label{lem:Lp euler scheme}
    Let $(\bar{X}^M_t)_{t\in[0,T]}$ be the process defined by the continuous extension of the Euler scheme \eqref{eq:continuous euler scheme MKV}. Assume Assumptions \ref{assumption 1} and \ref{assumption Lipschitz} hold. Then, for every $M\geq 1$, $(\bar{X}^M_t)_{t\in[0,T]} \in\mathbb{L}^p([0,T]\times\Omega)$ and  there exists a non-negative constant $C$ such that 
    \[\bigg\|\underset{0\leq t\leq T}{\sup} \big|\bar{X}^M_t\big|\bigg\|_p \leq C\big(1+\big\|X_0\big\|_p\big), \]
    where $C$ depends on $p, T, L$ and the coefficients $b,\sigma$ and $\sigma^0$.
\end{lemma}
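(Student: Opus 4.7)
The plan is first to check by a finite induction on $m$ that $\bar{X}^M_{t_m}\in L^p(\Omega)$ at every grid point (so that the conditional law $\mathcal{L}^1(\bar{X}^M_{t_m})$ lies in $\mathcal{P}_p(\R^d)$ almost surely and the coefficients at the next step are well defined), and then to run a Gronwall argument on
\begin{equation*}
\phi(t):=\E\Big[\sup_{0\leq s\leq t}\big|\bar{X}^M_s\big|^p\Big].
\end{equation*}
The basic ingredient is the linear growth bound $|b(t,x,\mu)| + |\sigma(t,x,\mu)| + |\sigma^0(t,x,\mu)| \leq K\big(1+|x|+\mathcal{W}_p(\mu,\delta_0)\big)$, valid for every $t\in[0,T]$, $x\in\R^d$, $\mu\in\mathcal{P}_p(\R^d)$, which follows from Assumption \ref{assumption Lipschitz} together with the continuity in time of $b,\sigma,\sigma^0$ on the compact $[0,T]$. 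Combined with the Remark following Lemma \ref{lem:cond expec inequality} (which states $\|\mathcal{W}_p(\mathcal{L}^1(X),\delta_0)\|_p\leq\|X\|_p$), the induction is routine: from $\bar{X}^M_0=X_0\in L^p$ (Assumption \ref{assumption 1}), each application of \eqref{eq:euler scheme MKV} preserves $L^p$-integrability, since the Gaussian increments $Z_{m+1}$, $Z^0_{m+1}$ are independent of $\mathcal{F}_{t_m}$ and have all moments finite.

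Once this is done, iterating \eqref{eq:continuous euler scheme MKV} yields the global integral representation
\begin{equation*}
\bar{X}^M_t = X_0 + \int_0^t b\big(\underline{s},\bar{X}^M_{\underline{s}},\mathcal{L}^1(\bar{X}^M_{\underline{s}})\big)ds + \int_0^t \sigma\big(\underline{s},\bar{X}^M_{\underline{s}},\mathcal{L}^1(\bar{X}^M_{\underline{s}})\big)dW_s + \int_0^t \sigma^0\big(\underline{s},\bar{X}^M_{\underline{s}},\mathcal{L}^1(\bar{X}^M_{\underline{s}})\big)dW^0_s,
\end{equation*}
where $\underline{s}:=t_m$ whenever $s\in[t_m,t_{m+1})$. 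Applying $|a+b+c+d|^p\leq 4^{p-1}(|a|^p+|b|^p+|c|^p+|d|^p)$, taking $\sup_{r\leq t}$ and then expectation, the drift term is bounded by Hölder's inequality in time, and each stochastic integral by Lemma \ref{lem:BDG} (Burkh\"older-Davis-Gundy) followed by Hölder in $L^{p/2}(\Omega)$, which is licit because $p\geq 2$. Using the linear growth bound above and the Remark once more to control the conditional Wasserstein term via $\E[\mathcal{W}_p(\mathcal{L}^1(\bar{X}^M_{\underline{s}}),\delta_0)^p]\leq \E[|\bar{X}^M_{\underline{s}}|^p]\leq \phi(s)$, one obtains
\begin{equation*}
\phi(t)\leq C\big(1+\|X_0\|_p^p\big) + C\int_0^t \phi(s)\,ds,
\end{equation*}
with $C$ depending only on $p$, $T$, $L$ and the coefficients $b,\sigma,\sigma^0$.

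A classical Gronwall lemma then gives $\phi(T)\leq C(1+\|X_0\|_p^p)$, and taking $p$-th roots delivers the announced estimate. The proof is entirely a combination of Hölder, BDG and Gronwall; the only conceptual subtlety — and arguably the only mild obstacle — is the presence of the \emph{conditional} law $\mathcal{L}^1(\cdot)$ in the measure argument, but this is cleanly absorbed by the Remark after Lemma \ref{lem:cond expec inequality}, which allows one to treat $\|\mathcal{W}_p(\mathcal{L}^1(\bar{X}^M_{\underline{s}}),\delta_0)\|_p$ exactly as one would treat $\|\bar{X}^M_{\underline{s}}\|_p$.
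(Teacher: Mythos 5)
Your proposal is correct and follows the same overall strategy as the paper: a finite forward induction over the grid points to guarantee that each $\bar{X}^M_{t_m}\in L^p$ (so that $\mathcal{L}^1(\bar{X}^M_{t_m})\in\mathcal{P}_p(\R^d)$ and the coefficients at the next step are well defined), followed by a Gronwall-type argument on the running supremum using the integral representation, the linear-growth bound deduced from Assumption \ref{assumption Lipschitz} plus time-continuity, BDG, and the Remark after Lemma \ref{lem:cond expec inequality}. The one technical difference is that you work with $\phi(t)=\E[\sup_{s\leq t}|\bar{X}^M_s|^p]$, raise everything to the $p$-th power via $|a+b+c+d|^p\leq 4^{p-1}(\cdots)$ and Hölder in time, and close with the classical Gronwall lemma, whereas the paper works directly with $\big\|\sup_{s\leq t}|\bar{X}^M_s|\big\|_p$ via Minkowski and Young and then invokes the specialized \emph{À la Gronwall} Lemma \ref{lem:a la Gronwall} to absorb the resulting $\big(\int_0^t f(s)^2\,ds\big)^{1/2}$ term. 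Both routes are valid and yield a constant uniform in $M$; your variant has the small advantage of relying only on the standard Gronwall inequality, at the cost of carrying $p$-th powers through the estimates.
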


The following lemma establishes that $\mathcal{L}^1(\bar{X}_t)$ is a version of the conditional law of $\bar{X}_t$ given the common noise $W^0$, whose proof is postponed to Appendix \ref{section:appendix}.

\begin{lemma} \label{lem:L1_cond_law}
Assume that Assumption \ref{assumption 1} holds with some $p\in[2, +\infty)$. Let $\bar{X}^M=(\bar{X}_t^M)_{t\in[0,T]}$ be the process defined by the continuous Euler scheme \eqref{eq:continuous euler scheme MKV}. Then for any $t\in[0,T]$,  $\mathcal{L}^1(\bar{X}_t)$ provides a  version of the conditional distribution of $\bar{X}_t$ given $W^0$. Moreover, we can find a version of $(\mathcal{L}^1(\bar{X}_t))_{t\in[0,T]}$ such that it is $\mathbb{F}^0$-adapted and has continuous paths in $\mathcal{P}_p(\R^d)$.
\end{lemma}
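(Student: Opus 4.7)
The plan is to prove by induction on $m$ that $\bar{X}^M_{t_m}$ can be written as a Borel function of $X_0$, $(W_s)_{s\leq t_m}$ and $(W^0_s)_{s\leq t_m}$, and simultaneously that $\mathcal{L}^1(\bar{X}^M_{t_m})$, defined via \eqref{eq:def-L1X}, is $\sigma(W^0_s,\, s\leq t_m)$-measurable. The base case $m=0$ is immediate: $\bar{X}_0=X_0$ lives on $(\Omega^1,\mathcal{F}^1,\mathbb{P}^1)$, so $\mathcal{L}^1(X_0)=\mathcal{L}(X_0)$ is deterministic. For the inductive step, \eqref{eq:euler scheme MKV} presents $\bar{X}^M_{t_{m+1}}$ as a Borel function of $\bar{X}^M_{t_m}$, of $\mathcal{L}^1(\bar{X}^M_{t_m})$, and of the increments $Z_{m+1}, Z^0_{m+1}$; Assumption \ref{assumption Lipschitz} ensures Borel measurability of the coefficients in the measure variable. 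Substituting the inductive representation yields $\bar{X}^M_{t_{m+1}}=g_{m+1}(X_0, W|_{[0,t_{m+1}]}, W^0|_{[0,t_{m+1}]})$ for some Borel $g_{m+1}$, and then Fubini's theorem applied to the product structure $\Omega=\Omega^0\times\Omega^1$ expresses $\mathcal{L}^1(\bar{X}^M_{t_{m+1}})(\omega^0)$ as the $\mathbb{P}^1$-law of $\omega^1\mapsto g_{m+1}(X_0(\omega^1), W(\omega^1), W^0(\omega^0))$, which depends measurably on $W^0|_{[0,t_{m+1}]}(\omega^0)$ only. The continuous extension \eqref{eq:continuous euler scheme MKV} is handled identically since it only appends a deterministic drift and $(W, W^0)$-increments on $[t_m,t]$.

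With the measurability in hand, the identification of $\mathcal{L}^1(\bar{X}^M_t)$ with the conditional law given $W^0$ follows from a standard independence argument. Writing $\bar{X}^M_t=g_t(X_0, W, W^0)$ for a Borel $g_t$, and using that $(X_0, W)$ is carried entirely by $\Omega^1$ and hence is independent of $\mathcal{F}^0$, Fubini gives, for any bounded Borel $\varphi$,
\[
\mathbb{E}\big[\varphi(\bar{X}^M_t)\,\big|\,\mathcal{F}^0\big](\omega^0)=\int_{\Omega^1}\varphi\big(g_t(X_0(\omega^1),W(\omega^1),W^0(\omega^0))\big)\,\mathbb{P}^1(d\omega^1),
\]
which is $\sigma(W^0)$-measurable. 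Combined with Proposition \ref{prop:lemma 2.4 Carmona Delarue}, this shows that $\mathcal{L}^1(\bar{X}^M_t)$ simultaneously provides versions of $\mathcal{L}(\bar{X}^M_t\mid\mathcal{F}^0)$ and of $\mathcal{L}(\bar{X}^M_t\mid W^0)$, proving the first assertion.

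For the second assertion, the continuous scheme $(\bar{X}^M_t)_{t\in[0,T]}$ is $\mathbb{F}$-adapted with continuous paths by construction, and Lemma \ref{lem:Lp euler scheme} provides $\mathbb{E}[\sup_{t\in[0,T]}|\bar{X}^M_t|^p]<\infty$. Lemma \ref{lem: 2.5 Carmona} then directly yields a version of $(\mathcal{L}^1(\bar{X}^M_t))_{t\in[0,T]}$ with continuous paths in $\mathcal{P}_p(\R^d)$ that is $\mathbb{F}^0$-adapted. The main obstacle I anticipate is keeping the functional representation of $\bar{X}^M_t$ tight enough through the induction: the coefficients evaluated at the random measure $\mathcal{L}^1(\bar{X}^M_{t_m})$ must be viewed as Borel functions of $W^0|_{[0,t_m]}$ alone, so the Borel measurability of $\mu\mapsto b(t,x,\mu)$ (and the analogous statements for $\sigma,\sigma^0$) inherited from Assumption \ref{assumption Lipschitz}, together with the inductive measurability claim, has to be applied carefully at each step.
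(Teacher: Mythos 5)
Your proof is correct and takes essentially the same route as the paper: both arguments hinge on writing the Euler iterates as measurable functions of $(X_0, W, W^0)$ (the explicit inductive/Fubini construction you give is precisely the content of the Carmona--Delarue Proposition 2.9 argument that the paper cites), and both obtain the $\mathbb{F}^0$-adapted continuous version by combining Lemma \ref{lem: 2.5 Carmona} with the moment bound of Lemma \ref{lem:Lp euler scheme}. You have simply spelled out the measurability and conditional-independence bookkeeping that the paper delegates to the reference.
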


In the subsequent discussion, to simplify notation, we directly denote by $(\mathcal{L}^1(X_t))_{t\in[0,T]}$ and  $(\mathcal{L}^1(\bar{X}_t))_{t\in[0,T]}$ the versions with continuous paths in $\mathcal{P}_p(\R^d)$.

\begin{lemma}
\label{lem:holder continuity solution}
 Consider $X=(X_t)_{t\in[0,T]}$ the solution to the McKean-Vlasov equation with common noise \eqref{eq:MKV}. Assume Assumption \ref{assumption 1} and \ref{assumption Lipschitz} hold. For every $0\leq s\leq t\leq T$, we have
 \[ \big\|X_t-X_s\big\|_p \leq C (t-s)^\frac{1}{2}, \]
 where the constant $C$ depends on $p$, $L$, $T$ and the data $(\|X_0\|_p, b, \sigma, \sigma^0)$.
\end{lemma}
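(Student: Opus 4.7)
The plan is to start from the integral form of the McKean--Vlasov SDE~\eqref{eq:MKV},
\[
X_t - X_s = \int_s^t b(u,X_u,\mathcal{L}^1(X_u))du + \int_s^t \sigma(u,X_u,\mathcal{L}^1(X_u))dW_u + \int_s^t \sigma^0(u,X_u,\mathcal{L}^1(X_u))dW^0_u,
\]
apply the triangle inequality for $\|\cdot\|_p$, and control the drift term with the generalized Minkowski inequality (Lemma~\ref{lem:general minkowski}) while handling the two stochastic integrals via the Burkh\"older--Davis--Gundy inequality (Lemma~\ref{lem:BDG}).

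For the drift piece, Minkowski yields a bound of the form $\int_s^t \|b(u,X_u,\mathcal{L}^1(X_u))\|_p\,du$. Using the Lipschitz assumption~\ref{assumption Lipschitz} and the fact that $u\mapsto b(u,0,\delta_0)$ is continuous on $[0,T]$ hence bounded, I get a pointwise bound
\[
|b(u,X_u,\mathcal{L}^1(X_u))| \leq K + L\bigl(|X_u| + \mathcal{W}_p(\mathcal{L}^1(X_u),\delta_0)\bigr),
\]
and combining the moment estimate~\eqref{eq : estimate MKV solution} with the remark after Lemma~\ref{lem:cond expec inequality} (giving $\|\mathcal{W}_p(\mathcal{L}^1(X_u),\delta_0)\|_p\leq \|X_u\|_p$), I conclude $\|b(u,X_u,\mathcal{L}^1(X_u))\|_p\leq C$ uniformly in $u\in[0,T]$. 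This provides a bound of order $(t-s)$ for the drift term.

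For the two stochastic integrals, BDG gives
\[
\Bigl\|\int_s^t \sigma(u,X_u,\mathcal{L}^1(X_u))dW_u\Bigr\|_p \leq C_p\,\Bigl\|\Bigl(\int_s^t |\sigma(u,X_u,\mathcal{L}^1(X_u))|^2 du\Bigr)^{1/2}\Bigr\|_p.
\]
Since $p\geq 2$, squaring both sides and applying Minkowski's inequality with exponent $p/2\geq 1$ yields
\[
\Bigl\|\Bigl(\int_s^t |\sigma|^2 du\Bigr)^{1/2}\Bigr\|_p^2 = \Bigl\|\int_s^t |\sigma|^2 du\Bigr\|_{p/2} \leq \int_s^t \|\sigma(u,X_u,\mathcal{L}^1(X_u))\|_p^2 \,du.
\]
The same linear-growth reasoning as for the drift bounds this integrand uniformly, giving a term of order $(t-s)^{1/2}$, and the common-noise integral is handled identically. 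Summing the three contributions and using $(t-s)\leq T^{1/2}(t-s)^{1/2}$, the Hölder $1/2$ estimate follows with a constant depending on $p$, $L$, $T$ and $(\|X_0\|_p, b, \sigma, \sigma^0)$.

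There is no real obstacle here; the only item worth being careful about is the passage from the conditional-law argument of $\sigma$ to a usable $L^p$ estimate, which is precisely what the remark after Lemma~\ref{lem:cond expec inequality} affords. Everything else is a standard estimation that relies on the a priori moment bound~\eqref{eq : estimate MKV solution}, Assumption~\ref{assumption Lipschitz}, and $p\geq 2$ to legitimize the Minkowski step for $p/2$.
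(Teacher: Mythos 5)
Your proposal is correct and follows essentially the same route as the paper's proof: decompose $X_t-X_s$ into the drift and the two stochastic integrals, control the drift with generalized Minkowski, control the stochastic integrals with Burkh\"older--Davis--Gundy followed by Minkowski at exponent $p/2$, reduce everything to uniform $L^p$ bounds via linear growth of the coefficients and the a priori moment estimate \eqref{eq : estimate MKV solution}, and absorb the $(t-s)$ drift contribution into the $(t-s)^{1/2}$ rate. The only cosmetic difference is that you spell out where linear growth comes from (Lipschitz continuity plus boundedness of $u\mapsto b(u,0,\delta_0)$ on $[0,T]$), whereas the paper invokes it directly, but the substance is identical.
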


\smallskip

\noindent
For the sake of clarity in the proof of Proposition \ref{prop:euler scheme cv}, we introduce the following notation, for every $m\in\{0,...,M-1\}$ and for every $t\in[t_m, t_{m+1})$, we define 
\begin{equation} \label{eq:time discretization}
    \underline{t}\coloneqq t_m.
\end{equation}

\begin{proof}[Proof of Proposition \ref{prop:euler scheme cv}]

Denote $(\mu_t)_{t\in[0,T]} = \big(\mathcal{L}^1(X_t)\big)_{t\in[0,T]}$ and $(\bar{\mu}^M_t)_{t\in[0,T]} = \big(\mathcal{L}^1(\bar{X}^M_t) \big)_{t\in[0,T]}$ that is well-defined by Lemma \ref{lem:L1_cond_law}. We write $\bar{X}_s$ and $\bar{\mu}_s$ instead of $\bar{X}^M_s$ and $\bar{\mu}^M_s$ when there is no ambiguity. Using Inequality \eqref{eq : estimate MKV solution} and Lemma \ref{lem:Lp euler scheme}, we deduce that $X=(X_t)_{t\in[0,T]},\, (\bar{X}^M_t)_{t\in[0,T]}$ belongs to $\mathbb{L}^p([0,T]\times\Omega)$. Consequently $(\mu_t)_{t\in[0,T]}$ and $(\bar{\mu}^M_t)_{t\in[0,T]}$ take values in $\mathcal{C}\big([0,T], \mathcal{P}_p(\R^d)\big)$. Fix $t\in[0,T]$, by Minkowski's inequality, we get that
\begin{equation} \label{eq:bound euler scheme error}
\begin{aligned}
    \bigg\| \underset{0\leq u \leq t}{\sup}\big|X_u-\bar{X}_u^M\big| \bigg\|_p & \leq \bigg\|\underset{0\leq u \leq t}{\sup} \int_0^u \Big(b\big(s,X_s,\mu_s\big) - b(\underline{s},\bar{X}_{\underline{s}}, \bar{\mu}_{\underline{s}})\Big)ds\bigg\|_p\\ 
    &\quad + \bigg\|\underset{0\leq u \leq t}{\sup} \int_0^u \Big(\sigma(s,X_s,\mu_s)- \sigma(\underline{s},\bar{X}_{\underline{s}}, \bar{\mu}_{\underline{s}})\Big)dW_s\bigg\|_p\\ 
    &\quad + \bigg\| \underset{0\leq u \leq t}{\sup} \int_0^u\Big(\sigma^0(s,X_s,\mu_s)- \sigma^0(\underline{s},\bar{X}_{\underline{s}}, \bar{\mu}_{\underline{s}})\Big)dW^0_s\bigg\|_p.
\end{aligned}
\end{equation}
In the following proof, we provide an upper bound for each term of the right-hand side of \eqref{eq:bound euler scheme error}. For the first term, by general Minkowski's inequality (Lemma \ref{lem:general minkowski}), we have 
\begin{align*}
    \bigg\|\underset{0\leq u \leq t}{\sup}& \int_0^u \Big(b\big(s,X_s,\mu_s\big) - b(\underline{s},\bar{X}_{\underline{s}}, \bar{\mu}_{\underline{s}})\Big)ds\bigg\|_p \leq \bigg\| \int_0^t \Big|b(s,X_s,\mu_s) - b(\underline{s},\bar{X}_{\underline{s}}, \bar{\mu}_{\underline{s}})\Big|ds\bigg\|_p  \\
    &\leq \int_0^t \Big\|b(s,X_s,\mu_s) - b(\underline{s},\bar{X}_{\underline{s}}, \bar{\mu}_{\underline{s}})\Big\|_pds  \\
    &\leq \int_0^t \Big\|b(s,X_s,\mu_s) - b(\underline{s},X_s,\mu_s)\Big\|_pds + \int_0^t \Big\|b(\underline{s},X_s,\mu_s\big) - b(\underline{s},\bar{X}_{\underline{s}}, \bar{\mu}_{\underline{s}})\Big\|_pds  \\
    &\leq L\int_0^t \Big\|(s-\underline{s})^\rho \big(1+|X_s|+\mathcal{W}_p(\mu_s,\delta_0)\big)\Big\|_pds + L\int_0^t \Big\|\big|X_s- \bar{X}_{\underline{s}}\big|+\mathcal{W}_p(\mu_s,\bar{\mu}_{\underline{s}})\Big\|_pds,
\end{align*}
where we use the $\rho\,$-Hölder and $L\,$-Lipschitz continuity of $b$. From the estimate \eqref{eq : estimate MKV solution} of the solution process $(X_t)_{t\in[0,T]}$, we can deduce that

\begin{align}\label{eq:bound b euler}
    \bigg\|\underset{0\leq u \leq t}{\sup}& \int_0^u \Big(b\big(s,X_s,\mu_s\big) - b(\underline{s},\bar{X}_{\underline{s}}, \bar{\mu}_{\underline{s}})\Big)ds\bigg\|_p \notag \\
    &\leq L \underset{0\leq u\leq T}{\sup}|u-\underline{u}|^\rho \int_0^t \big(1+\|X_s\|_p + \|\mathcal{W}_p(\mu_s,\delta_0)\|_p\big)ds + L\int_0^t \big(\|X_s-\bar{X}_{\underline{s}}\|_p + \big\|\mathcal{W}_p(\mu_s,\bar{\mu}_{\underline{s}})\big\|_p\big)ds \notag \\
    &\leq Lh^\rho \int_0^t \big(1+2\|X_s\|_p\big)ds + 2L\int_0^t \|X_s-\bar{X}_{\underline{s}}\|_pds \notag  \\
    &\leq LTh^\rho \bigg(1+\bigg\|\underset{0\leq s\leq t}{\sup}\big|X_s\big|\bigg\|_p\bigg) + 2L\int_0^t \big(\|X_s-X_{\underline{s}}\|_p + \|X_{\underline{s}}-\bar{X}_{\underline{s}}\|_p\big)ds \notag \\
    &\leq C_{p,T,L,b,\sigma,\sigma^0}\big(1+\|X_0\|_p\big) h^\rho + 2L\int_0^t \Big(\|X_s-X_{\underline{s}}\|_p + \bigg\| \underset{0\leq u\leq s}{\sup}\big|X_{\underline{u}}-\bar{X}_{\underline{u}}\big|\bigg\|_p\Big)ds.
\end{align}
We apply Lemma \ref{lem:holder continuity solution} to Inequality \eqref{eq:bound b euler} to derive that
\begin{align}\label{eq:final bound for b, euler}
     \bigg\|\underset{0\leq u \leq t}{\sup}& \int_0^u \Big(b\big(s,X_s,\mu_s\big) - b(\underline{s},\bar{X}_{\underline{s}}, \bar{\mu}_{\underline{s}})\Big)ds\bigg\|_p \notag \\ &\leq C_{p,T,L,b,\sigma,\sigma^0}\big(1+\|X_0\|_p\big) h^\rho + L\int_0^t \Big(\|X_s-X_{\underline{s}}\|_p + \bigg\| \underset{0\leq u\leq s}{\sup}\big|X_u-\bar{X}_{u}\big|\bigg\|_p\Big)ds \notag \\
     &\leq C_{p,T,L,b,\sigma,\sigma^0}\big(1+\|X_0\|_p\big) h^\rho  + LT\kappa\underset{0\leq s\leq T}{\sup} |s-\underline{s}|^\frac{1}{2} + \int_0^t \bigg\| \underset{0\leq u\leq s}{\sup} \big|X_u-\bar{X}_u\big|\bigg\|_pds \notag \\
     &\leq C_{p,T,L,b,\sigma,\sigma^0, \|X_0\|_p}\: h^{\frac{1}{2}\wedge\rho} + \int_0^t \bigg\|\underset{0\leq u\leq s}{\sup}\big|X_u-\bar{X}_u\big|\bigg\|_pds.
\end{align}

For the second and the last terms of Inequality \eqref{eq:bound euler scheme error}, the computations are very similar since $\sigma$ and $\sigma^0$ have the same regularity as that of $b$. By the Burkölder-Davis-Gundy inequality (Lemma \ref{lem:BDG}), we have
\begin{align}
    \bigg\|&\underset{0\leq u \leq t}{\sup} \int_0^u \Big(\sigma(s,X_s,\mu_s)- \sigma(\underline{s},\bar{X}_{\underline{s}}, \bar{\mu}_{\underline{s}})\Big)dW_s\bigg\|_p \notag  \\ &\leq C_{d,p} \bigg\| \Big(\int_0^t \big|\sigma(s,X_s,\mu_s)-\sigma(\underline{s},\bar{X}_{\underline{s}},\bar{\mu}_{\underline{s}})\big|^2ds\Big)^\frac{1}{2}\bigg\|_p  
    =C_{d,p}\bigg\| \int_0^t \big|\sigma(s,X_s,\mu_s)-\sigma(\underline{s},\bar{X}_{\underline{s}},\bar{\mu}_{\underline{s}})\big|^2ds\bigg\|_\frac{p}{2}^\frac{1}{2} \notag  \\
    &\leq C_{d,p}\bigg(\int_0^t \Big\|\big| \sigma(s,X_s,\mu_s)-\sigma(\underline{s},\bar{X}_{\underline{s}},\bar{\mu}_{\underline{s}})\big|^2\Big\|_\frac{p}{2}ds\bigg)^\frac{1}{2} 
    = C_{d,p}\bigg(\int_0^t \Big\|\sigma(s,X_s,\mu_s)-\sigma(\underline{s},\bar{X}_{\underline{s}},\bar{\mu}_{\underline{s}})\Big\|_p^2ds\bigg)^\frac{1}{2} \notag \\
    &\leq C_{d,p}\bigg(\int_0^t \Big\|\sigma(s,X_s,\mu_s)-\sigma(\underline{s},X_s,\mu_s)\Big\|_p^2ds + \int_0^t \Big\|\sigma(\underline{s},X_s,\mu_s)-\sigma(\underline{s},\bar{X}_{\underline{s}},\bar{\mu}_{\underline{s}})\Big\|_p^2ds\bigg)^\frac{1}{2}, \notag
\end{align}
where we used the Minkowski's and Young's inequalities. Due to the $\rho\,$-Hölder and $L\,$-Lipschitz continuity of the mapping $\sigma$, we obtain that

\begin{align}
    \bigg\|&\underset{0\leq u \leq t}{\sup} \int_0^u \Big(\sigma(s,X_s,\mu_s)- \sigma(\underline{s},\bar{X}_{\underline{s}}, \bar{\mu}_{\underline{s}})\Big)dW_s\bigg\|_p \notag \\ 
    &\leq C_{d,p}\bigg(\int_0^t \Big\|(s-\underline{s})^{2\rho}\big(1+|X_s|+\mathcal{W}_p(\mu_s,\delta_0)\big)\Big\|_p^2ds + L^2\int_0^t \Big\|\big|X_s-\bar{X}_{\underline{s}}\big| + \mathcal{W}_p(\mu_s,\bar{\mu}_{\underline{s}})\Big\|_p^2ds\bigg)^\frac{1}{2}\notag \\
    &\leq C_{d,p} \bigg[\underset{0\leq u\leq T}{\sup}|s-\underline{s}|^{2\rho} \! \int_0^t \! \Big(1+\|X_s\|_p  +\big\|\mathcal{W}_p(\mu_s,\delta_0)\big\|_p\Big)ds  + 2L^2\!\int_0^t \! \Big(\big\|X_s-\bar{X}_{\underline{s}}\big\|_p^2 + \big\| \mathcal{W}_p(\mu_s,\bar{\mu}_{\underline{s}}) \big\|_p^2\Big)ds\bigg]^\frac{1}{2} \notag \\
    &\leq C_{d,p} \bigg(h^{2\rho }\int_0^t \bigg(1 + 2\bigg\|\underset{0\leq u\leq s}{\sup}\big|X_u\big|\bigg\|_p\bigg)ds + 4L^2\int_0^t \big\|X_s-\bar{X}_{\underline{s}}\big\|_p^2ds\bigg)^\frac{1}{2}. \notag 
\end{align}
Applying the estimate \eqref{eq : estimate MKV solution} yields to

\begin{align}
    \bigg\|&\underset{0\leq u \leq t}{\sup} \int_0^u \Big(\sigma(s,X_s,\mu_s)- \sigma(\underline{s},\bar{X}_{\underline{s}}, \bar{\mu}_{\underline{s}})\Big)dW_s\bigg\|_p \notag \\ 
    &\leq C_{d,p}\bigg(C_{p,L,T,b,\sigma,\sigma^0}\big(1+ \|X_0\|_p\big)^2h^{2\rho} +  8L^2 \int_0^t \Big(\big\|X_s-X_{\underline{s}}\big\|_p^2 +\big\|X_{\underline{s}}-\bar{X}_{\underline{s}}\big\|_p^2\Big)ds\bigg)^\frac{1}{2} \notag \\
    &\leq C_{d,p,L,T,b,\sigma,\sigma^0}\bigg[\big(1+\|X_0\|_p\big)h^\rho
    + \bigg(\int_0^t\Big(\big\|X_s-X_{\underline{s}}\big\|_p^2 + \bigg\| \underset{0\leq u\leq s}{\sup}\big|X_u-\bar{X}_u\big| \bigg\|_p^2\Big)ds\bigg)^\frac{1}{2}\bigg]. \label{eq:bound sigma euler}
 \end{align}
 \noindent   
Applying again Lemma \ref{lem:holder continuity solution} to Inequality \eqref{eq:bound sigma euler} yields to
 \begin{align}\label{eq:final bound for sigma, euler}
     &\bigg\|\underset{0\leq u \leq t}{\sup} \int_0^u \Big(\sigma(s,X_s,\mu_s)- \sigma(\underline{s},\bar{X}_{\underline{s}}, \bar{\mu}_{\underline{s}})\Big)dW_s\bigg\|_p \notag \\
     &\leq C_{d,p,L,T,b,\sigma,\sigma^0} \bigg[\big(1+\|X_0\|_p\big)h^\rho
    + \bigg(\int_0^t\bigg(\big\|X_s-X_{\underline{s}}\big\|_p^2 + \bigg\| \underset{0\leq u\leq s}{\sup}\big|X_u-\bar{X}_u\big| \bigg\|_p^2\bigg)ds\bigg)^\frac{1}{2}\bigg] \notag \\
    &\leq C_{d,p,L,T,b,\sigma,\sigma^0} \bigg[\big(1+\|X_0\|_p\big)h^\rho
    + \underset{0\leq s\leq T}{\sup}|s-\underline{s}|^\frac{1}{2} + \bigg(\int_0^t\bigg\| \underset{0\leq u\leq s}{\sup}\big|X_u-\bar{X}_u\big|\bigg\|_p^2ds\bigg)^\frac{1}{2}\bigg] \notag \\
    &\leq C_{d,p,L,T,b,\sigma,\sigma^0,\|X_0\|_p} h^{\frac{1}{2}\wedge \rho} + \bigg(\int_0^t\bigg\| \underset{0\leq u\leq s}{\sup}\big|X_u-\bar{X}_u\big|\bigg\|_p^2ds\bigg)^\frac{1}{2}.
 \end{align}
 We can repeat the same reasoning for $\sigma^0$ and $W^0$ in order to deduce a similar upper bound
 \begin{equation} \label{eq:final bound for sigma0, euler}
 \begin{aligned}
     \bigg\|\underset{0\leq u \leq t}{\sup} \int_0^u &\Big(\sigma^0(s,X_s,\mu_s)- \sigma^0(\underline{s},\bar{X}_{\underline{s}}, \bar{\mu}_{\underline{s}})\Big)dW^0_s\bigg\|_p \\ 
     &\leq C_{d,p,L,T,b,\sigma,\sigma^0,\|X_0\|_p} h^{\frac{1}{2}\wedge \rho} + \bigg(\int_0^t\bigg\| \underset{0\leq u\leq s}{\sup}\big|X_u-\bar{X}_u\big|\bigg\|_p^2ds\bigg)^\frac{1}{2}.
 \end{aligned}
 \end{equation}
 We plug the Inequalities \eqref{eq:final bound for b, euler}, \eqref{eq:final bound for sigma, euler} and \eqref{eq:final bound for sigma0, euler} into Inequality \eqref{eq:bound euler scheme error} to get that
 \begin{equation} \label{eq:final bound Lp error euler}
 \begin{aligned}
 \bigg\|\underset{0\leq s\leq t}{\sup}\big|X_s - \bar{X}_s\big|\bigg\|_p &\leq \int_0^t \bigg\|\underset{0\leq u\leq s}{\sup}\big|X_u-\bar{X}_u\big|\bigg\|_pds +\bigg(\int_0^t\bigg\|\underset{0\leq u\leq s}{\sup}\big|X_u-\bar{X}_u\big|\bigg\|_p^2ds\bigg)^\frac{1}{2} \\ & \quad + C_{d,p,L,T,b,\sigma,\sigma^0,\|X_0\|_p} h^{\frac{1}{2}\wedge\rho}.
 \end{aligned}
 \end{equation}
 \smallskip
 Since $X=(X_t)_{t\in[0,T]}$ and $(\bar{X}^M_t)_{t\in[0,T]}$ belongs to $\mathbb{L}^p$, the application 
 \[ t \longmapsto \bigg\|\underset{0\leq s\leq t}{\sup}\big|X_s - \bar{X}_s\big|\bigg\|_p \]
 is continuous, non-decreasing and non-negative on $[0,T]$. We conclude this proof by applying Lemma \ref{lem:a la Gronwall} to Inequality \eqref{eq:final bound Lp error euler} and deduce the existence of a constant $C$ depending on the parameters $d$, $p$, $L$, $T$, and the data $(\|X_0\|_p,b,\sigma,\sigma^0)$ such that we have
 \begin{equation*}
     \bigg\|\underset{0\leq s\leq t}{\sup}\big|X_s - \bar{X}_s\big|\bigg\|_p \leq Ch^{\frac{1}{2}\wedge\rho}. \hfill \qedhere
 \end{equation*}
\end{proof}

\medskip
\section{Convergence rate of the particle system} \label{section:proof particle method cv}

This section is devoted to proving the convergence rate of the particle method, as described in Theorem \ref{thm:particle method cv rate} and Theorem \ref{thm:cv rate scheme}. To do this, we need the following $N$-particle system $(\bar{Y}^1, \ldots, \bar{Y}^N)$ without interaction. Recall the definition of $\underline{s}$ in \eqref{eq:time discretization}. Let $W^0,W^1,\, \ldots, W^N$ be the same Wiener processes as defined in \eqref{eq:particle method continuous scheme}. 

\noindent
\begin{equation}\label{eq:continuous euler solution scheme}
 \forall\,1\leq i\leq N,\quad   \bar{Y}^{i}_{t} = X^i_0 + \int_0^tb\big(\underline{s}, \bar{Y}^{i}_{\underline{s}}, \mathcal{L}^1(\bar{Y}^i_{\underline{s}})\big)ds + \int_0^t \sigma\big(\underline{s}, \bar{Y}^{i}_{\underline{s}}, \mathcal{L}^1(\bar{Y}^i_{\underline{s}})\big)dW^i_s + \int_0^t \sigma^0\big(\underline{s}, \bar{Y}^{i}_{\underline{s}}, \mathcal{L}^1(\bar{Y}^i_{\underline{s}})\big)dW^0_s.
\end{equation}
We have the following property of the particles in the system \eqref{eq:continuous euler solution scheme}, whose proof is postponed to Appendix \ref{section:appendix}.
\begin{lemma}
\label{lem:MKV sol cond iid}
    The particles $\bar{Y}^1,\, \ldots, \bar{Y}^N$ are identically distributed having the same distribution as $\bar{X}$ defined by the continuous Euler scheme \eqref{eq:continuous euler scheme MKV}, and independent conditionally to $W^0$.
\end{lemma}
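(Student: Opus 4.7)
The plan is to prove the lemma by induction on the discretization index $m\in\{0,\ldots,M\}$, establishing the stronger joint statement: conditionally on $\sigma(W^0)$, the random variables $\bar{Y}^1_{t_m},\ldots,\bar{Y}^N_{t_m}$ are i.i.d.\ and their common conditional law equals $\mathcal{L}^1(\bar{X}_{t_m})$. From this, identical (unconditional) distribution follows by taking $W^0$-expectations, and conditional independence propagates to intermediate times $t\in[t_m,t_{m+1})$ by construction since the continuous extension \eqref{eq:continuous euler solution scheme} is a linear (deterministic-coefficient) functional of $\bar{Y}^i_{t_m}$, the increment $W^i_t-W^i_{t_m}$, and the $\sigma(W^0)$-measurable quantities.

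The base case $m=0$ uses Assumption \ref{assumption 1}: the copies $X_0^1,\ldots,X_0^N$ live on $(\Omega^1,\mathcal{F}^1,\mathbb{P}^1)$ and are hence jointly independent of $W^0$, so conditionally on $W^0$ they remain i.i.d.\ with law $\mathcal{L}(X_0)=\mathcal{L}^1(\bar{X}_0)$. For the inductive step, set $\nu_m:=\mathcal{L}^1(\bar{X}_{t_m})$, which is $\sigma(W^0)$-measurable by Lemma \ref{lem:L1_cond_law}. By the inductive hypothesis, $\mathcal{L}^1(\bar{Y}^i_{t_m})=\nu_m$ almost surely for every $i$, so the Euler recursion takes the functional form
\begin{equation*}
\bar{Y}^i_{t_{m+1}} \;=\; \Phi_m\!\left(\bar{Y}^i_{t_m},\; W^i_{t_{m+1}}-W^i_{t_m}\right),
\end{equation*}
where the measurable map $\Phi_m$ depends only on $t_m$, $h$, the coefficients $b,\sigma,\sigma^0$, and on the $\sigma(W^0)$-measurable parameters $\nu_m$ and $W^0_{t_{m+1}}-W^0_{t_m}$. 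The same map $\Phi_m$ applied to $(\bar{X}_{t_m},W_{t_{m+1}}-W_{t_m})$ produces $\bar{X}_{t_{m+1}}$.

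Conditionally on $\mathcal{F}^0$, the tuples $(\bar{Y}^i_{t_m},W^i_{t_{m+1}}-W^i_{t_m})$ are i.i.d.\ across $i$ and share the common conditional law of $(\bar{X}_{t_m},W_{t_{m+1}}-W_{t_m})$ given $W^0$: indeed $\bar{Y}^i_{t_m}$ is $\sigma(X^i_0,W^i,W^0)$-measurable, the pairs $(X^i_0,W^i)$ are i.i.d.\ and jointly independent of $W^0$, and the future increments $W^i_{t_{m+1}}-W^i_{t_m}$ are independent of $W^j$ for $j\neq i$ and of $W^0$. Applying the $\mathcal{F}^0$-parameterized measurable map $\Phi_m$ preserves both the conditional independence and the conditional distribution, yielding $\mathcal{L}^1(\bar{Y}^i_{t_{m+1}})=\mathcal{L}^1(\bar{X}_{t_{m+1}})$ and closing the induction.

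The main technical subtlety is the bookkeeping of conditional independence on the product space $(\Omega^0\times\Omega^1,\mathcal{F}^0\otimes\mathcal{F}^1,\mathbb{P}^0\otimes\mathbb{P}^1)$: one has to verify carefully that independence of the families $\{(X^i_0,W^i)\}_{i=1}^N$ from $W^0$ implies their conditional independence given $\mathcal{F}^0$, and that this conditional i.i.d.\ structure is preserved under the $\sigma(W^0)$-measurable update $\Phi_m$. Once this is done, unconditional identical distribution with $\bar{X}$ follows by integrating over $W^0$, and the claim over the full interval $[0,T]$ (not just grid points) is immediate because the continuous extension interpolates linearly in the increments of $W^i$ and $W^0$ on each subinterval.
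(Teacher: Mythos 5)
Your proof is correct, and it is in fact \emph{more} careful than the paper's. The paper's proof is a one-shot argument: it asserts that each $\bar{Y}^i$ is a measurable function $\Psi^i(X_0^i,W^i,W^0)$ and concludes conditional i.i.d.-ness from the i.i.d.-ness of $(X_0^i,W^i)_i$. What that terse argument glosses over is that one needs the $\Psi^i$ to be the \emph{same} function across $i$, and this hinges on the fact that the conditional law $\mathcal{L}^1(\bar{Y}^i_{\underline{s}})$ appearing in the defining equation \eqref{eq:continuous euler solution scheme} of $\bar{Y}^i$ is the same $\sigma(W^0)$-measurable random measure $\mathcal{L}^1(\bar{X}_{\underline{s}})$ for every $i$ — which is not obvious a priori, since each $\bar{Y}^i$ enters its own conditional-law recursion. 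Your induction on the grid index $m$ is precisely the mechanism that establishes this: the inductive hypothesis pins down $\nu_m=\mathcal{L}^1(\bar{Y}^i_{t_m})=\mathcal{L}^1(\bar{X}_{t_m})$ for all $i$, which in turn makes the update map $\Phi_m$ particle-independent and $\sigma(W^0)$-parameterized, so that the conditional i.i.d.-ness and the identification of conditional laws propagate to step $m+1$. In this sense your route is a rigorous unpacking of what the paper's appeal to a single $\Psi$ implicitly assumes, and it is the argument one actually needs to write down for a complete proof. One small remark: as stated, your inductive claim controls the marginals $\bar{Y}^i_{t_m}$ at each fixed grid time, whereas the lemma is phrased at the level of the processes $(\bar{Y}^i_t)_{t\in[0,T]}$; this is easily fixed by strengthening the hypothesis to the full discrete paths $(\bar{Y}^i_{t_0},\ldots,\bar{Y}^i_{t_m})$ (your observation that $\bar{Y}^i_{t_m}$ is $\sigma(X_0^i,W^i,W^0)$-measurable already carries this), and the fixed-time version is all that is actually invoked when Lemmas \ref{lem:Wp convergence empirical measure} and \ref{lem:FournierGuillin} are applied in the proof of Theorem \ref{thm:particle method cv rate}.
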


\noindent
Hence, we may still use the same notation $\bar{\mu}_t$ for $\mathcal{L}^1(\bar{Y}^i_{t})$ in the proof. In order to prove Theorem \ref{thm:particle method cv rate}, we will need the following results in addition of Lemma \ref{lem:MKV sol cond iid} (see \cite[Corollary 2.14]{Lacker} and \cite[Theorem 1]{FournierGuillin} for the proof).

\begin{lemma}[Corollary 2.14 in \cite{Lacker}]
\label{lem:Wp convergence empirical measure}
    Suppose $(X_i)_{1\leq i\leq N}$ are i.i.d. $\R^d$-valued random variables with law $\mu$ and let $\mu_N = \frac{1}{N}\sum_{i=1}^N\delta_{X_i}$ denote the empirical measure. If $\mu \in \mathcal{P}_p(\R^d)$, $p\geq 1$, then $\mathcal{W}_p(\mu_N,\mu) \rightarrow 0$ almost surely, and also $\E\big[\mathcal{W}_p^p(\mu_N,\mu) \big] \rightarrow{} 0$ when $N\rightarrow +\infty$.
\end{lemma}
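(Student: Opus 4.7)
The plan is to split the statement into two parts: the almost sure convergence $\mathcal{W}_p(\mu_N,\mu)\to 0$, and the $L^1$-convergence $\E[\mathcal{W}_p^p(\mu_N,\mu)]\to 0$. I will handle the former by combining weak convergence with moment convergence, and the latter by a generalized dominated convergence argument.

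For the almost sure convergence, I would combine two classical ingredients. First, Varadarajan's theorem (the Glivenko-Cantelli type statement for empirical measures on Polish spaces) yields that $\mu_N$ converges weakly to $\mu$, $\mathbb{P}$-almost surely. Second, since $\mu\in\mathcal{P}_p(\R^d)$, the sequence $(|X_i|^p)_{i\ge 1}$ is i.i.d.\ with finite mean, so the strong law of large numbers gives, almost surely,
\[
    \int_{\R^d}|x|^p \, d\mu_N(x) = \frac{1}{N}\sum_{i=1}^N |X_i|^p \xrightarrow[N\to+\infty]{} \E|X_1|^p = \int_{\R^d}|x|^p \, d\mu(x).
\]
A standard characterization of the $p$-Wasserstein topology (see for instance Villani's \emph{Optimal Transport: Old and New}, Theorem~6.9) asserts that convergence in $\mathcal{W}_p$ is equivalent to weak convergence together with convergence of the $p$-th absolute moments. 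Combining the two ingredients above on the full-measure event where both hold, we conclude that $\mathcal{W}_p(\mu_N,\mu)\to 0$ almost surely.

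For the convergence of $\E[\mathcal{W}_p^p(\mu_N,\mu)]$, I would use Pratt's lemma. The triangle inequality in $(\mathcal{P}_p(\R^d),\mathcal{W}_p)$, together with $(a+b)^p\leq 2^{p-1}(a^p+b^p)$ for $a,b\geq 0$, yields
\[
    0 \le \mathcal{W}_p^p(\mu_N,\mu) \leq 2^{p-1}\big(\mathcal{W}_p^p(\mu_N,\delta_0)+\mathcal{W}_p^p(\mu,\delta_0)\big) = 2^{p-1}\bigg(\frac{1}{N}\sum_{i=1}^N |X_i|^p + \E|X_1|^p\bigg) =: Z_N.
\]
By the SLLN, $Z_N \to 2^p\, \E|X_1|^p$ almost surely, while $\E Z_N = 2^p\,\E|X_1|^p$ is independent of $N$, and in particular $\E Z_N \to \E(\lim Z_N)$. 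Together with the almost sure convergence $\mathcal{W}_p^p(\mu_N,\mu)\to 0$ established in the previous paragraph, the dominated convergence theorem in Pratt's form gives $\E[\mathcal{W}_p^p(\mu_N,\mu)]\to 0$.

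The only non-elementary input is the equivalence of $\mathcal{W}_p$-convergence with weak convergence plus $p$-th moment convergence; once this is granted, everything reduces to the scalar SLLN applied to the i.i.d.\ sequences $(|X_i|^p)_i$ and to suitable test functions, together with the triangle inequality in Wasserstein space. I do not expect a serious obstacle: an alternative route via direct truncation (splitting $X_i$ into $X_i\mathbf{1}_{|X_i|\le R}$ and a remainder, and using that the truncated empirical measures converge in bounded-Lipschitz metric) would also work but is longer. Quantitative rates, such as those of Fournier-Guillin, would require covering/concentration arguments and are not needed here.
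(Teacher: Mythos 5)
The paper does not prove this lemma; it is imported verbatim as Corollary~2.14 of the cited reference \cite{Lacker}, and the text explicitly defers to that source. Your proof is a correct and complete reconstruction of the standard argument, and it is essentially the one used in Lacker's notes: Varadarajan's theorem gives almost sure weak convergence of the empirical measure, the strong law of large numbers gives almost sure convergence of the $p$-th moments $\int |x|^p\,d\mu_N \to \int |x|^p\,d\mu$, and the metrization theorem for $\mathcal{W}_p$ (weak convergence plus convergence of $p$-th moments) upgrades this to $\mathcal{W}_p(\mu_N,\mu)\to 0$ almost surely. The $L^1$-convergence via Pratt's lemma, with the dominating sequence $Z_N = 2^{p-1}\big(N^{-1}\sum_i |X_i|^p + \E|X_1|^p\big)$ whose expectations are constant and whose almost sure limit has that same expectation, is exactly the right way to avoid a uniform integrability argument and is both correct and clean. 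One could equivalently invoke Scheffé's lemma applied to the nonnegative sequence $Z_N - \mathcal{W}_p^p(\mu_N,\mu)$, or observe directly that $(\mathcal{W}_p^p(\mu_N,\mu))_N$ is uniformly integrable because $(N^{-1}\sum_i|X_i|^p)_N$ is (being a reverse martingale closed by $\E|X_1|^p$), but your Pratt route is the most economical. No gaps.
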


\begin{lemma}[Theorem 1 in \cite{FournierGuillin}]
\label{lem:FournierGuillin}
    Let $\mu\in\mathcal{P}(\R^d)$. Assume that $M_q(\mu):=\int_{\R^d}|x|^q\mu(dx)<\infty$ for some $q>p$. We consider an i.i.d. sequence $(X_k)_{k\geq1}$ of $\mu$-distributed random variables and, for $N\geq 1$, the empirical measure $\mu_N := \frac{1}{N}\sum_{k=1}^N\delta_{X_k}$. There exists a positive constant $C_{d,p,q}$ such that, for every $N\geq 1$, we have
    \begin{equation*}
        \E\big[\mathcal{W}_p^p(\mu_N,\mu)\big] \leq C_{d,p,q}M^{p/q}_q(\mu)\left\{
        \begin{array}{ll}
        N^{-\frac{1}{2}} + N^{-(q-p)/q} & \text{ if } p>\frac{d}{2} \text{ and } q\neq2p,  \\
        N^{-\frac{1}{2}}\log(1+N) + N^{-(q-p)/q}  & \text{ if } p=\frac{d}{2} \text{ and } q\neq2p, \\
        N^{-\frac{p}{d}}+N^{-(q-p)/q} & \text{ if } p\in(0,\frac{d}{2}) \text{ and } q\neq \frac{d}{d-p}.
        \end{array}\right.
    \end{equation*}
\end{lemma}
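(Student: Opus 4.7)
The plan is to sketch a proof along the lines of the original argument in \cite{FournierGuillin}, which relies on a dyadic truncation-and-partition scheme together with a careful balancing of the tail error and the ``bulk'' concentration error.

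First, I would separate the tail contribution of $\mu$. Fix a truncation radius $R>0$ to be optimized at the end of the argument, and split $\R^d$ into the ball $B(0,R)$ and its complement. The $q$-th moment assumption gives $\mu(\R^d\setminus B(0,R))\leq M_q(\mu)/R^q$ by Markov's inequality, and the transport cost associated with the tail mass is controlled in $\mathcal{W}_p^p$ by an integral of $|x|^p$ against the tail of $\mu$, which by Hölder is at most $C\,M_q(\mu)\,R^{p-q}$. This term, after optimization, produces the $N^{-(q-p)/q}$ contribution in the final bound.

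Next, on $B(0,R)$ I would partition dyadically: at level $n\geq 0$, cover $B(0,R)$ by a collection $\mathcal{Q}_n$ of $O(2^{nd})$ cubes of side $2^{-n}R$. A coupling between $\mu$ and $\mu_N$ restricted to $B(0,R)$ can be built by refining down the dyadic tree, matching as much mass as possible between corresponding cubes at each level and pushing any leftover residual to the next (finer) level. The $p$-th Wasserstein cost of this coupling is bounded by $C\sum_{n\geq 0}(2^{-n}R)^{p}\Delta_n$, where $\Delta_n = \sum_{Q\in\mathcal{Q}_n}|\mu_N(Q)-\mu(Q)|$ is the total variation discrepancy at scale $n$.

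The crux is bounding $\E[\Delta_n]$. By Cauchy--Schwarz together with $\mathrm{Var}(\mu_N(Q))\leq\mu(Q)/N$ and $\sum_{Q\in\mathcal{Q}_n}\mu(Q)\leq 1$, one obtains $\E[\Delta_n]\leq 2^{nd/2}/\sqrt{N}$. Plugging back, the expected bulk cost is of order $R^{p}\sum_{n\geq 0}2^{n(d/2-p)}/\sqrt{N}$. The geometric series converges when $p>d/2$ and yields $R^{p}/\sqrt{N}$; at $p=d/2$ it diverges logarithmically, and truncating at the natural logarithmic depth in $n$ produces $R^{p}N^{-1/2}\log(1+N)$; when $p<d/2$, the sum must be cut at a finite depth $n^\star$ chosen to balance the refinement error $2^{-n^{\star}p}$ against the accumulated concentration error, which yields the $R^{p}N^{-p/d}$ rate.

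Finally, I would combine the tail bound $C\,M_q(\mu)R^{p-q}$ and the bulk estimate from the previous step, then optimize in $R$ to obtain the announced three-regime bound with a constant of the form $C_{d,p,q}M_q(\mu)^{p/q}$. The main obstacle I anticipate is the careful dyadic coupling construction together with the three-way optimization in $R$ and $n^\star$; the critical case $p=d/2$ in particular requires close attention to the cutoff of the dyadic series, which is precisely what produces the logarithmic correction.
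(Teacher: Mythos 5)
The paper itself gives no proof of this lemma; it is quoted from Fournier and Guillin \cite{FournierGuillin}, Theorem 1. Your dyadic treatment of the bulk is the right idea and does produce the three regimes $N^{-1/2}$, $N^{-1/2}\log(1+N)$, $N^{-p/d}$ according to $p$ versus $d/2$. The gap is in the tail analysis. Take for concreteness $p>d/2$, so that your bulk estimate is of order $R^{p}N^{-1/2}$ and your tail estimate is of order $M_q(\mu)R^{p-q}$. Optimizing over $R$ balances these at $R^{q}$ of order $M_q(\mu)\sqrt{N}$, and both terms then become of order $M_q(\mu)^{p/q}N^{-(q-p)/(2q)}$, \emph{not} $N^{-(q-p)/q}$; the exponent you obtain is off by a factor of two in the denominator, and the resulting bound is strictly weaker than the stated one (since $N^{-(q-p)/q}\leq N^{-(q-p)/(2q)}$ for $q>p$).

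A single truncation radius cannot give the sharp rate. What Fournier and Guillin actually do is decompose $\R^d$ into dyadic annuli $A_m=B(0,2^{m+1})\setminus B(0,2^m)$, apply the compactly supported estimate (essentially your bulk analysis) on each annulus after rescaling, and control separately the random mass mismatch between $\mu_N$ and $\mu$ on each $A_m$, which in expectation is of order $\min(\mu(A_m),\sqrt{\mu(A_m)/N})$. Weighting by the transport cost $2^{mp}$ and using $\mu(A_m)\leq M_q(\mu)2^{-mq}$, the resulting sum over $m$ is dominated by the crossover scale at which $\mu(A_m)$ is of order $1/N$, i.e. $2^m$ of order $(M_q(\mu)N)^{1/q}$, and there the summand evaluates to $M_q(\mu)^{p/q}N^{-(q-p)/q}$. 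This multi-scale bookkeeping, rather than a one-shot truncate-and-optimize, is what produces the stated exponent; without it the argument as you have sketched it falls short of the lemma.
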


\begin{remark}\label{rq:cv rate PoC}
    We deduce from Lemma \ref{lem:FournierGuillin} that
    \begin{equation*}
        \big\|\mathcal{W}_p(\mu_N,\mu)\big\|_p \leq C_{d,p,q} M^{1/q}_q(\mu)\left\{
        \begin{array}{ll}
        N^{-\frac{1}{2p}} + N^{-(q-p)/qp} & \text{ if } p>\frac{d}{2} \text{ and } q\neq2p,  \\
        N^{-\frac{1}{2p}}\log(1+N)^\frac{1}{p} + N^{-(q-p)/qp}  & \text{ if } p=\frac{d}{2} \text{ and } q\neq2p, \\
        N^{-\frac{1}{d}}+N^{-(q-p)/qp} & \text{ if } p\in(0,\frac{d}{2}) \text{ and } q\neq \frac{d}{d-p}.
        \end{array}\right.
    \end{equation*}
\end{remark}

\begin{proof}[Proof of Theorem \ref{thm:particle method cv rate}]
\textit{(i)} Recall that, for every $t\in[0,T]$, $\bar{\mu}_t = \mathcal{L}^1(\bar{X}_t)$, $\bar{\mu}^N_t$ is the empirical distribution of the particles $(\bar{X}^{i,N}_t)_{1\leq i\leq N}$ defined in \eqref{eq:particle method continuous scheme}. Using the Minkowski inequality, we can bound $\underset{m \in \{1,\ldots, M\}}{\sup} \big\| \mathcal{W}_p(\bar{\mu}^N_{t_m},\bar{\mu}_{t_m})\big\|_p$ by two terms, which we later demonstrate that they converge to 0 at the desired convergence rate. Consider $(\bar{Y}^1,\dots, \bar{Y}^N)$ identically distributed copies of the continuous Euler scheme $\bar{X}$ defined in \eqref{eq:continuous euler solution scheme}. For every $t\in[0,T]$, we define $\nu^N_t = \frac{1}{N}\sum_{i=1}^N \delta_{\bar{Y}^i_t}$. It follows that, for every $m \in \{1,\ldots, M\}$,
\begin{align}
    \underset{k \in \{1,\ldots, m\}}{\sup} \big\| \mathcal{W}_p(\bar{\mu}^N_{t_k},\bar{\mu}_{t_k})\big\|_p
    &\leq \underset{k \in \{1,\ldots, m\}}{\sup} \big\| \mathcal{W}_p\big(\bar{\mu}^N_{t_k},\nu^N_{t_k}\big) \big\|_p + \underset{k \in \{1,\ldots, m\}}{\sup} \bigg\| \mathcal{W}_p(\bar{\mu}_{t_k},\nu^N_{t_k})\bigg\|_p \notag \\     
    & \leq \frac{1}{N}\sum_{i=1}^N \underset{k \in \{1,\ldots, m\}}{\sup} \big\|\bar{X}^{i,N}_{t_k}-\bar{Y}^i_{t_k}\big\|_p + \underset{0\leq t\leq t_m}{\sup} \big\| \mathcal{W}_p\big(\bar{\mu}_{t},\nu^N_{t}\big) \big\|_p \notag  \\
    & \leq \bigg\| \underset{k \in \{1,\ldots, m\}}{\sup}\big|\bar{X}^{1,N}_{t_k}-\bar{Y}^1_{t_k}\big|\bigg\|_p + \underset{0\leq t\leq t_m}{\sup} \big\| \mathcal{W}_p\big(\bar{\mu}_{t},\nu^N_{t}\big) \big\|_p \notag  \\
    &\leq \bigg\| \underset{0\leq t\leq t_m}{\sup}\big|\bar{X}^{1,N}_{t}-\bar{Y}^1_{t}\big|\bigg\|_p + \underset{0\leq t\leq t_m}{\sup} \big\| \mathcal{W}_p\big(\bar{\mu}_{t},\nu^N_{t}\big) \big\|_p, \label{eq:bound wasserstein euler scheme empirical measure to conditional law}
\end{align}
\noindent
where the first inequality follows from the triangle inequality of the Wasserstein distance and the Minkowski inequality, the second inequality comes from the fact that the measure $\frac{1}{N}\sum_{i=1}^{N}\delta_{(x_i, y_i)}$ is a coupling of the measure $\frac{1}{N}\sum_{i=1}^{N}\delta_{x_i}$ and $\frac{1}{N}\sum_{i=1}^{N}\delta_{y_i}$, and the third inequality is due to the fact that $\big(\bar{X}^{i,N},\bar{Y}^i)_{i\in\{1\,,\,\ldots,\,N\}}$ are identically distributed random variables.
Remark that $(\bar{Y}^i_t)_{t\in[0,T]}$ are conditionally independent copies of the continuous extension $(\bar{X}_t)_{t\in[0,T]}$ of the Euler scheme defined by \eqref{eq:continuous euler scheme MKV}.
For the first term of \eqref{eq:bound wasserstein euler scheme empirical measure to conditional law}, we apply the Lipschitz continuity of the coefficients $b$, $\sigma$, $\sigma^0$ and the Burkölder-Davis-Gundy inequality (Lemma \ref{lem:BDG}) to get
\begin{align*}
    \bigg\|\underset{0\leq s\leq t}{\sup}& \big|\bar{X}^{1,N}_s-\bar{Y}^1_s\big|\bigg\|_p \leq \bigg\|\underset{0\leq u \leq t}{\sup} \int_0^u \Big(b\big(\underline{s},\bar{Y}^{1}_{\underline{s}},\bar{\mu}_{\underline{s}}\big) - b(\underline{s},\bar{X}^{1,N}_{\underline{s}}, \bar{\mu}^N_{\underline{s}})\Big)ds\bigg\|_p\\ 
    &\qquad + \bigg\|\underset{0\leq u \leq t}{\sup} \int_0^u \Big(\sigma(\underline{s},\bar{Y}^1_{\underline{s}},\bar{\mu}_{\underline{s}})- \sigma(\underline{s},\bar{X}^{1,N}_{\underline{s}}, \bar{\mu}^N_{\underline{s}})\Big)dW_s\bigg\|_p\\ 
    &\qquad + \bigg\| \underset{0\leq u \leq t}{\sup} \int_0^u\Big(\sigma^0(\underline{s},\bar{Y}^1_{\underline{s}},\bar{\mu}_{\underline{s}})- \sigma^0(\underline{s},\bar{X}^{1,N}_{\underline{s}}, \bar{\mu}^N_{\underline{s}})\Big)dW^0_s\bigg\|_p \\
    &\leq L \int_0^t \Big(\big\|\bar{X}^{1,N}_{\underline{u}} - \bar{Y}^1_{\underline{u}}\big\|_p + \big\|\mathcal{W}_p\big(\bar{\mu}^N_{\underline{u}}, \bar{\mu}_{\underline{u}}\big)\big\|_p\Big)du + \bigg[C_{d,p}L\int_0^t \Big(\big\|\bar{X}^{1,N}_{\underline{u}} - \bar{Y}^1_{\underline{u}}\big\|_p^2  + \big\|\mathcal{W}_p\big(\bar{\mu}^N_{\underline{u}}, \bar{\mu}_{\underline{u}}\big)\big\|_p^2\Big)du \bigg]^\frac{1}{2},
\end{align*}
\noindent
where the positive constant $C_{d,p}$ comes from the Burkölder-Davis-Gundy inequality, see Lemma \ref{lem:BDG}. It follows from Inequality \eqref{eq:bound wasserstein euler scheme empirical measure to conditional law}
\begin{equation}\label{eq: pre Gronwall inequality}
    \bigg\|\underset{0\leq s\leq t}{\sup} \big|\bar{X}^{1,N}_s - \bar{Y}^1_s\big|\bigg\|_p \leq 2L\int_0^t\bigg\|\underset{0\leq r\leq u}{\sup}\big|\bar{X}^{1,N}_r-\bar{Y}^1_r\big|\bigg\|_pdu + C_{d,p,L}\bigg(\int_0^t\bigg\|\underset{0\leq r\leq u}{\sup}\big|\bar{X}^{1,N}_r-\bar{Y}^1_r\big|\bigg\|_p^2du\bigg)^\frac{1}{2} + g(t),
\end{equation}
where 
\[ t\in[0,T] \longmapsto g(t) = L\int_0^t \underset{0\leq r\leq u}{\sup}\big\|\mathcal{W}_p\big(\bar{\mu}_r, \nu^N_r\big) \big\|_pdu + C_{d,p,L} \bigg(\int_0^t \underset{0\leq r\leq u}{\sup} \big\|\mathcal{W}_p\big(\bar{\mu}_r, \nu^N_r\big)\big\|_p^2du\bigg)^\frac{1}{2}. \]
We also define on $[0,T]$ the mapping 
\[ t\longmapsto f(t) = \bigg\|\underset{0\leq r\leq t}{\sup}\big|\bar{X}^{1,N}_r-\bar{Y}^1_r\big|\bigg\|_p. \]
It follows from Lemma \ref{lem:Lp euler scheme} that $(\bar{Y}^1_t)_{t\in[0,T]} \in \mathbb{L}^p([0,T]\times\Omega)$. By a similar reasoning, we can deduce that $(\bar{X}^{1,N}_t)_{t\in[0,T]} \in \mathbb{L}^p([0,T]\times\Omega)$. Moreover $(\bar{\mu}_t)_{t\in[0,T]}$ and $(\nu^N_t)_{t\in[0,T]}$ are random variables on $\Omega^0$ taking values in $\mathcal{C}([0,T],\mathcal{P}_p(\R^d))$. Hence the mappings $f$ and $g$ are continuous, non-decreasing and non-negative on $[0,T]$. We deduce from Inequality \eqref{eq: pre Gronwall inequality} and Lemma \ref{lem:a la Gronwall} that
\begin{equation} \label{eq:bound sup particle by wasserstein}
    \bigg\|\underset{0\leq s\leq t}{\sup} \big|\bar{X}^{1,N}_s-\bar{Y}^1_s\big|\bigg\|_p \leq 2e^{(2L + C^2_{d,p,L})t}\, g(t).
\end{equation}
Merging Inequalities \eqref{eq:bound wasserstein euler scheme empirical measure to conditional law} and \eqref{eq:bound sup particle by wasserstein}, we apply Lemma \ref{lem:a la Gronwall} once more to derive the following essential inequality
\begin{equation} \label{eq: bound sup wasserstein empirical}
    \underset{m \in \{1,\ldots, M\}}{\sup}\big\| \mathcal{W}_p(\bar{\mu}^N_{t_m},\bar{\mu}_{t_m})\big\|_p \leq C_{d,p,L,T}\, \underset{0\leq t\leq T}{\sup} \big\|\mathcal{W}_p\big(\bar{\mu}_{t},\nu^N_{t}\big) \big\|_p,
\end{equation}
where $C_{d,p,L,T}$ is a positive constant. Due to Lemma \ref{lem:MKV sol cond iid}, the $\bar{Y}^i$'s are identically distributed and independent conditionally to $W^0$ and in $\mathbb{L}^p$, by Lemma \ref{lem:Wp convergence empirical measure}, we deduce that for every $t\in[0,T]$, for every $\omega_0\in \Omega^0$, 
\[ \E^1\bigg[\mathcal{W}_p^p\big(\bar{\mu}_t(\omega_0), \nu^N_t(\omega_0)\big) \bigg] \xrightarrow[N\rightarrow +\infty]{}0. \]
Moreover we have the following control by the Young's inequality
\begin{equation} \label{eq:control cond expectation}
    \begin{aligned} 
        &\underset{0\leq t\leq T}{\sup} \E^1\big[\mathcal{W}^p_p\big(\bar{\mu}_t, \nu^N_t\big)\big] \leq 2^{p-1} \underset{0\leq t\leq T}{\sup} \E^1\big[\mathcal{W}^p_p\big(\bar{\mu}_{t}, \delta_0\big) + \mathcal{W}^p_p\big(\delta_0, \nu^N_t\big)\big] \\
        &\leq 2^{p-1}\frac{1}{N}\sum_{i=1}^N \underset{0\leq t\leq T}{\sup} \E^1\big[|\bar{Y}^1_t|^p\big] + 2^{p-1} \underset{0\leq t\leq T}{\sup} \E^1\big[|\bar{Y}^1_t|^p\big] \\
        &= 2^p\underset{0\leq t\leq T}{\sup}\E^1\big[|\bar{Y}^1_t|^p\big].
    \end{aligned}
\end{equation}
We conclude by using the $L^p$ version of the dominated convergence theorem and the Inequality \eqref{eq: bound sup wasserstein empirical} to deduce that
\[ \underset{m \in \{1,\ldots, M\}}{\sup}  \big\| \mathcal{W}_p(\bar{\mu}^N_{t_m},\bar{\mu}_{t_m})\big\|_p \leq C_{d,p,L,T}\, \underset{0\leq t\leq T}{\sup} \big\| \mathcal{W}_p\big(\bar{\mu}_{t},\nu^N_{t}\big) \big\|_p \xrightarrow[N\rightarrow +\infty]{}0. \]
\medskip \\
\textit{(ii)} Additionally, we assume that $\|X_0\|_{p+\varepsilon} <\infty$ for $\varepsilon >0$. Hence, it follows from Lemma \ref{lem:Lp euler scheme} that $\bar{Y}^i \in \mathbb{L}^{p+\varepsilon}$ for every $i \in \{1, \ldots, N\}$. Moreover the $\bar{Y}^i$'s are i.i.d. given the path of $W^0$ according to Lemma \ref{lem:MKV sol cond iid}. Applying Lemma \ref{lem:FournierGuillin}, we establish the following rate of convergence for every $\omega_0\in \Omega^0$,
\begin{equation*}
    \underset{0\leq t\leq T}{\sup} \E^1\Big[ \mathcal{W}_p^p\big(\bar{\mu}_t(\omega_0), \nu^N_t(\omega_0)\big) \Big] \leq \tilde{C} \left\{ 
    \begin{array}{ll}
        N^{-\frac{1}{2}} + N^{-\frac{\varepsilon}{(p+\varepsilon)}} & \text{ if } p>\frac{d}{2} \text{ and } \varepsilon\neq p, \\
        N^{-\frac{1}{2}}\big(\log(1+N)\big) + N^{-\frac{\varepsilon}{(p+\varepsilon)}} & \text{ if } p=\frac{d}{2} \text{ and } \varepsilon\neq p,\\
        N^{-\frac{1}{d}}+N^{-\frac{\varepsilon}{(p+\varepsilon)}} & \text{ if } p\in(0,\frac{d}{2}) \text{ and } p+\varepsilon\neq \frac{d}{d-p},
    \end{array}\right.
\end{equation*}
where $\tilde{C}$ is a constant independent of $N$. By Inequality \eqref{eq:control cond expectation}, we can apply the $L^p$ version of the dominated convergence theorem to deduce a similar convergence rate for $\underset{0\leq t\leq T}{\sup} \big\| \mathcal{W}_p^p(\bar{\mu}_t, \nu^N_t) \big\|_p$. From the Inequality \eqref{eq: bound sup wasserstein empirical} and Remark \ref{rq:cv rate PoC}, we can conclude that
\begin{align*}
    \underset{m \in \{1,\ldots, M\}}{\max} \big\| \mathcal{W}_p(\bar{\mu}^N_{t_m}, \bar{\mu}_{t_m}) \big\|_p &\leq \sup_{t\in[0,T]}\, \big\| \mathcal{W}_p(\bar{\mu}^N_{t}, \bar{\mu}_{t}) \big\|_p \\
    &\leq C \left\{ 
    \begin{array}{ll}
        N^{-\frac{1}{2p}} + N^{-\frac{\varepsilon}{p(p+\varepsilon)}} & \text{ if } p>\frac{d}{2} \text{ and } \varepsilon\neq p, \\
        N^{-\frac{1}{2p}}\big(\log(1+N)\big)^\frac{1}{p} + N^{-\frac{\varepsilon}{p(p+\varepsilon)}} & \text{ if } p=\frac{d}{2} \text{ and } \varepsilon\neq p,\\
        N^{-\frac{1}{d}}+N^{-\frac{\varepsilon}{p(p+\varepsilon)}} & \text{ if } p\in(0,\frac{d}{2}) \text{ and } p+\varepsilon\neq \frac{d}{d-p},
    \end{array}\right.
\end{align*}
where $C$ is a positive constant that depends on the parameters $d$, $p$, $L$, $T$ and the data $(\|X_0\|_{p+\varepsilon}, b, \sigma, \sigma^0)$.
\end{proof}

\medskip

\begin{proof}[Proof of Theorem \ref{thm:cv rate scheme}]
Recall that the processes $(\bar{Y}^i_t)_{t\in[0,T]}, 1\leq i\leq N$ are defined by the continuous Euler scheme \eqref{eq:continuous euler solution scheme}. Then we use the Minkowski inequality to get that
\begin{equation} \label{eq:norme sup scheme divided}
    \bigg\|\sup_{t\in[0,T]}\big|\bar{X}^{1,N}_t - X_t\big|\bigg\|_p \leq \bigg\|\sup_{t\in[0,T]}\big|\bar{X}^{1,N}_t - \bar{Y}^{1}_t\big|\bigg\|_p + \bigg\|\sup_{t\in[0,T]}\big|\bar{Y}^{1}_t - X_t\big|\bigg\|_p.
\end{equation}
\noindent
By the inequality \eqref{eq:bound sup particle by wasserstein}, we have
\begin{equation*}
    \bigg\| \sup_{t\in [0,T]} \big|\bar{X}^{1,N}_{t} - \bar{Y}^{1}_{t}\big| \bigg\|_p \leq C_{d,p,L,T}\, \sup_{t\in[0,T]}\, \big\| \mathcal{W}_p(\bar{\mu}^N_{t}, \bar{\mu}_{t}) \big\|_p.
\end{equation*}
From Theorem \ref{thm:particle method cv rate}, we derive the following rate of convergence
\begin{equation}\label{eq:cv rate particle method}
    \sup_{t\in[0,T]}\, \big\| \mathcal{W}_p(\bar{\mu}^N_{t}, \bar{\mu}_{t}) \big\|_p \leq C_{d,p,L,T}\,\mathcal{E}_N,
\end{equation}
where 
\begin{equation*}
    \mathcal{E}_N = \left\{
    \begin{array}{ll}
        N^{-\frac{1}{2p}} + N^{-\frac{\varepsilon}{p(p+\varepsilon)}} & \text{ if } p>\frac{d}{2} \text{ and } \varepsilon\neq p, \\
        N^{-\frac{1}{2p}}\big(\log(1+N)\big)^\frac{1}{p} + N^{-\frac{\varepsilon}{p(p+\varepsilon)}} & \text{ if } p=\frac{d}{2} \text{ and } \varepsilon\neq p,\\
        N^{-\frac{1}{d}}+N^{-\frac{\varepsilon}{p(p+\varepsilon)}} & \text{ if } p\in(0,\frac{d}{2}) \text{ and } p+\varepsilon\neq \frac{d}{d-p}.
    \end{array}\right.
\end{equation*}
Moreover, it follows from Proposition \ref{prop:euler scheme cv} that
\begin{equation}\label{eq:cv rate euler scheme}
    \bigg\|\sup_{t\in[0,T]}\big|X_t - \bar{Y}^1_t\big|\bigg\|_p \leq C_{d,p,L,T}\,h^{\frac{1}{2}\wedge\rho},
\end{equation}
since $\bar{X}^{1,N}_t$ shares the same Brownian motions as $X_t$, so does $\bar{Y}^1_t$. We plug Inequalities \eqref{eq:cv rate particle method} and \eqref{eq:cv rate euler scheme} into Inequality \eqref{eq:norme sup scheme divided} to deduce the rate of convergence in Inequality \eqref{eq:total cv rate}.
\end{proof}

\medskip
\section{Numerical simulations} \label{section : simulation examples}

This section presents two simulation examples, the first one is a conditional Ornstein-Uhlenbeck process with common noise and the second one is an application from \cite{ren2024risk} representing an Interbank market model. The simulation codes are available via Google Colab (\url{https://bit.ly/49xgGHG}, \url{https://bit.ly/3D4zwK3}).

\subsection{Conditional Ornstein-Uhlenbeck process with common noise}

In this section, we present the following simulation example in $\R^d$
\begin{equation} \label{eq:simu example}
 dX_t =-\big(X_t - \E^1(X_t)\big)dt + \sigma dW_t + \sigma^0 dW_t^0, \quad X_0=x_0\in\R^d
\end{equation}
where $\sigma$ and $\sigma^0$ are two positive scalar. This example generalizes, incorporating common noise, the one analyzed in detail in \cite[Section 3.1]{Lacker}. It is obvious that Equation \eqref{eq:simu example} admits a unique solution $X=(X_t)_{t\in[0,T]}$, as the drift coefficient $b$ is linear in $x$ and in $\mu$, and the diffusion coefficients $\sigma$ and $\sigma^0$ are constants. A straightforward computation yields the following closed-form expression for the unique solution
\begin{equation}\label{eq:explicit solution example}
    X_t = e^{-t} x_0 + \sigma \int_0^t e^{-(t-s)} dW_s + \sigma^0 W^0_t.
\end{equation}
We set $x_0 = 0$, $T=1$ and $\sigma = \sigma^0 = \sqrt{0.2}$. In a first time, we try to compute the simulation error by computing the $\mathbb{L}^2$-error between the solution process $(X_t)_{t\in[0,T]}$ and its approximation. \smallskip \\

The numerical study of this equation is divided into two parts. In the first part, we consider the case where the dimension  $d=2$ and analyze the $L^2$-error $\{\mathbb{E} [\sup_{t\in[0,T]}|X_t-\bar{X}_t^{1,N}|^2]\}^{1/2}$, where $(X_t)_{t\in[0,T]}$ is the true solution defined by \eqref{eq:explicit solution example} and $(\bar{X}_t^{1,N})_{t\in[0,T]}$ is the first particle path defined by \eqref{eq:particle method continuous scheme} sharing the same Brownian motion of $(X_t)_{t\in[0,T]}$. This $L^2$-error is approximated by 
\begin{equation}\label{eq:approxi-l2-error} \varepsilon_N^2 = \frac{1}{30} \sum_{j=1}^{30} \frac{1}{N} \sum_{i=1}^N \max_{1\leq m\leq M} \big|\bar{X}^{i,N,j}_{t_m} - X^{i,j}_{t_m}\big|^2,
\end{equation}
using the time discretization number $M=100$. Here, in \eqref{eq:approxi-l2-error}, for every fixed $i\in \{1, ..., N\}$, $j\in \{1, ..., 30\}$,  the random variables $\bar{X}^{i,N,j}_{t_m}$ is simulated using the particle method \eqref{eq:euler particle system}, and $\bar{X}^{i,j}_{t_m}$ is computed using the explicit solution  \eqref{eq:explicit solution example} both sharing the same Brownian motion.  Note that the mean error over $j\in \{1, .., 30\}$ is used for the Monte-Carlo estimation of the expectation. In Figure \ref{fig:MeanSquaredError}, we display the log-log error of $\varepsilon_N$ as a mapping of the number of particles $N \in \{2^{6}, \, \ldots,\,2^{16}\}$.

\noindent



The second part focuses on the one-dimensional case. Remark that this example has an explicit formula for the density of $(\mu_t)_{t\in[0,T]} = (\mathcal{L}^1(X_t))_{t\in[0,T]}$, given by  
\begin{equation*}
\frac{\mu_t(x)}{dx} = \frac{1}{\sqrt{2\pi\sigma^2(1-e^{-t})}} \exp\bigg(-\frac{|x - X_0 -\sigma_0W^0_t|^2}{2\sigma^2(1-e^{-t})}\bigg).
\end{equation*}
Hence, we can compute an approximated density $(\hat{\mu}^{N, h, \eta}_t)_{t\in[0,T]}$ by using the kernel method as presented in \cite{DensitySimuMKV} for a standard McKean-Vlasov equation. The estimator $\hat{\mu}^{N, h, \eta}_T$ of the density $\mu_T$ at the final time $T=1$ is defined, for every $x\in\R$, by 
\begin{equation*}
\widehat{\mu}_T^{N, h, \eta}(x):=N^{-1} \sum_{i=1}^N \eta^{-1} K\left(\eta^{-1}\left(x-\bar{X}_T^{i, N}\right)\right), 
\end{equation*}
where $K(\cdot)$ is the kernel function and $\eta$ is the bandwidth. We will apply a Gaussian-based kernel of order $l=5$, $K(x)= \frac{1}{8}(15 - 10x^2 + x^4)\phi(x)$ where $\phi(x) = (2\pi)^{-1/2}\exp{-\frac{x^2}{2}}$, with the bandwidth $\eta=N^{-1/(2(l+1)+1)}$ chosen according to Corollary 2.11 of \cite{DensitySimuMKV}, since $\mu_T \in \mathcal{C}^\infty_b(\R)$. The estimation error is computed by 
\begin{equation}\label{eq:approxi-density-error}
    \mathcal{E}_N^2=\frac{1}{30} \sum_{j=1}^{30} \max_{x \in \mathcal{D}}\left|\left(\widehat{\mu}_T^{N, h, \eta}\right)_j(x)-\mu_T(x)\right|^2,
\end{equation}
where the domain $\mathcal{D}$ is a uniform grid on $[-3,3]$ chosen according to the trajectory of the simulations $(\bar{X}^{i,N})_{1\leq i\leq N}$. Figure \ref{fig:density_kernels_error} shows the log-log error of this density estimation.

\begin{figure}[H]
\centering
\begin{minipage}[t]{0.5\textwidth}
   \centering
    \includegraphics[width=1\linewidth]{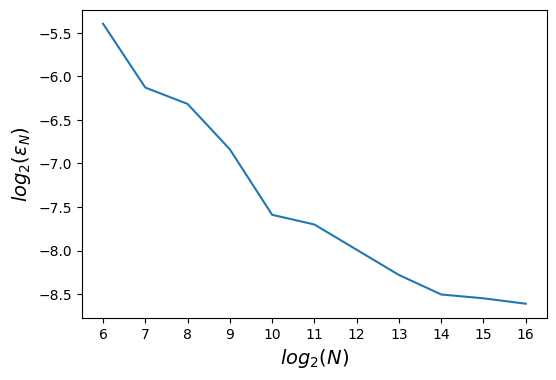}
    \caption{Log-log error \eqref{eq:approxi-l2-error} between $(X_t)_{t\in[0,T]}$ and $(\bar{X}_{t_m})_{0\leq m\leq M}$ (slope = -0.35)}
    \label{fig:MeanSquaredError}
\end{minipage}
\hspace{-0.2cm}\begin{minipage}[t]{0.5\textwidth}
   \centering
    \includegraphics[width=1\linewidth]{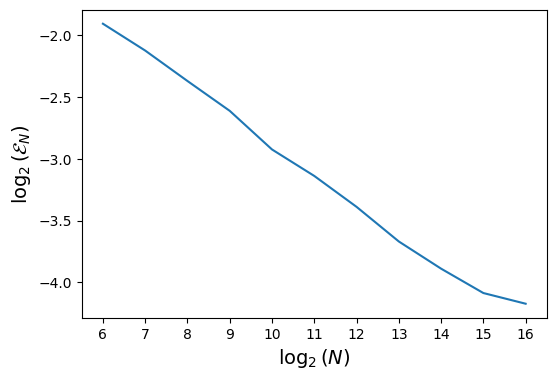}
    \caption{Log-log error \eqref{eq:approxi-density-error} between $\hat{\mu}^{N,h,\eta}_T$ and $\mu_T$ (slope = -0.4)}
    \label{fig:density_kernels_error}
\end{minipage}
\end{figure}

\subsection{Interbank market model}

We consider an application from \cite{ren2024risk} where they study risk-sensitive mean field games (MFG) with common noise. It is an infinite population model where the log-reserve $(X^{i,N}_t)_{t\in[0,T]}$ of the bank $i\in I$ at time $t\in[0,T]$ satisfies the following dynamics 
\begin{equation} \label{eq:MKV_interbank}
dX^{i,N}_t = \Big(a(\bar{X}_t - X^{i,N}_t) + u^i_t +b(t) \Big)dt + \sigma \sqrt{1-\rho^2}dW^i_t + \sigma\rho dW^0_t,
\end{equation}
with $\bar{X}_t = \lim_{N\rightarrow \infty} \frac{1}{N}\sum_{i=1}^N X^{i,N}_t$ represents the limiting market state which is following the dynamic
\begin{equation}\label{eq:interbank_ex-eq2}
d\bar{X}_t = (\bar{u}_t+b(t))dt + \sigma\rho dW^0_t,
\end{equation}
where $\bar{u}_t = \lim_{N\rightarrow \infty} \frac{1}{N}\sum_{i=1}^N u^i_t $. The transaction $u^i_t \in \mathbb{R}$ represents the money that the bank lends to or borrows from the central bank during the market activity at each time $t$, the market shock is simulated by $W^0_t \in \mathbb{R}$ which is independent of the shock received by the bank $W^i_t \in \mathbb{R}$. 

\noindent
The parameter $a \in \mathbb{R}$ represents the mean reversion rate of the bank's reserve towards the market state. The bank's liquidity before market activity at each time $t$ is denoted by $b(t)$. The volatility of the log-reserve of the bank with respect to its own local shock (underlying
uncertainty source) is given by $\sigma\rho \in \mathbb{R}$. Meanwhile, the volatility of the log-reserve with respect to the global shock that affects the market (i.e. the macroeconomic factors), is characterized by $\sigma\sqrt{1-\rho^2}\in\mathbb{R} $. As can be seen from above equations, an instantaneous coefficient $0\leq \rho \leq 1$ is a common multiplier factor for the shock delivered by the bank itself and by the environment.

The error analysis of this Interbank market model will proceed as follows. First, we fix $b(t) = 1$ for every $t\in[0,T]$, set $a=10$ and choose $\rho \in (0,1)$ to ensure the presence of both  non-zero idiosyncratic noises and common noise. Next, we compute the optimal transaction rate $(u^{i,\star}_t)_{t\in[0,T]}$ with respect to the cost function defined by \cite[Equation (134)]{ren2024risk}. It is important to note that the explicit formula for $(u^{i,\star}_t)_{t\in[0,T]}$ depends on parameters $(\Pi_t, \Lambda_t, \Upsilon_t, \Delta_t, \Gamma_t, \Psi_t)_{t\in[0,T]}$,  as specified in \cite[Equations (139-144)]{ren2024risk} and these parameters are determined by a system of ODEs, which can be numerically solved  using the Python library \texttt{solve\_ivp}  from \texttt{scipy.integrate}.

After computing the optimal transaction rate $(u^{i,\star}_t)_{t\in[0,T]}$, the value of $\bar{u}^{\star}_t$ can be directly deduced from  \cite[Equation (145)]{ren2024risk} and plugged into the dynamics of $(\bar{X}_t)_{t\in[0,T]}$ defined by \eqref{eq:interbank_ex-eq2}. Consequently, it can be numerically solved using a standard Euler scheme for diffusion processes (see e.g. \cite[Section 7.1]{NumericalProbability}). On the other hand, the dynamics described by \eqref{eq:MKV_interbank}, incorporating the optimal transaction rates $(u^{i,\star}_t)_{t\in[0,T]}$, can be computed using the particle method presented in this paper. 

The simulation error is therefore evaluated as the difference between $\frac{1}{N}\sum_{i=1}^N X^{i,N}_t$, obtained from the following dynamics 
\begin{equation}
dX^{i,N}_t = \left\{10\left(\frac{1}{N}\sum_{i=1}^{N}X_t^{i,N}-X_t^{i,N}\right) + u^{i,\star}_t + 1 \right\} dt + \sigma \sqrt{1 - \rho^2} dW_t^i + \sigma \rho dW_t^0, \smallskip \label{eq:interbank-eq3} 
\end{equation}
and $\bar{X}_t$ computed by \eqref{eq:interbank_ex-eq2}, and we perform 30 independent experiments for Monte-Carlo approximation of the expectation. More specifically, we set the time discretization number $M=100$ for both  \eqref{eq:interbank_ex-eq2} and \eqref{eq:interbank-eq3}, and compute the simulation error using the following formula:
\begin{equation}\label{eq:interbank-error}
    \mathcal{E}_N^2 = \frac{1}{30}\sum_{j=1}^{30} \max_{1\leq m \leq M} \left\| \frac{1}{N}\sum_{i=1}^N X^{i,N,j}_{t_m} - \bar{X}^{j}_{t_m} \right\|^2.
\end{equation}
where $j$ in the superscript of  $\bar{X}^{i,N,j}_{t_m}$ and $\bar{X}^{j}_{t_m}$ denotes the index of the independent experiment, and for each fixed $j\in \{1, ..., 30\}$, $\bar{X}^{i,N,j}_{t_m}, 1\leq i\leq N$ is computed using the particle method \eqref{eq:euler particle system} based on the dynamics in \eqref{eq:interbank-eq3}, while $\bar{X}^{j}_{t_m}$ is computed using a standard Euler scheme for the diffusion process \eqref{eq:interbank_ex-eq2}. Figure \ref{fig:MFG_MeanSquaredError} illustrates the log-log error of $ \mathcal{E}_N^2$ defined by \eqref{eq:interbank-error} and Figure \ref{fig:MFG_simulation_examples} displays 10 simulated paths of $(X^{i,N}_t)_{t\in[0,T]}$ under the above setting. 

\begin{figure}[H]
\centering
\begin{minipage}[t]{0.5\textwidth}
   \centering
    \includegraphics[width=0.95\linewidth]{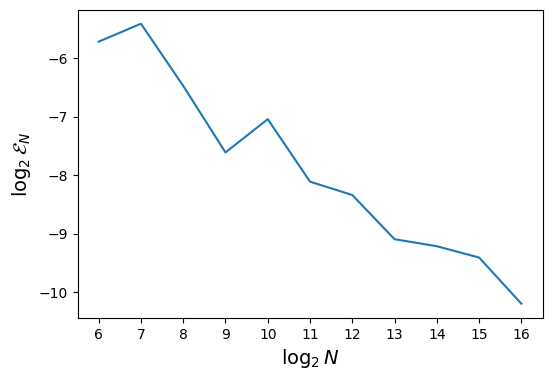}
    \caption{Log-log error of $\mathcal{E}_N$ defined by \eqref{eq:interbank-error} (slope = -0.4)}
    \label{fig:MFG_MeanSquaredError}
\end{minipage}
\hspace{-0.2cm}\begin{minipage}[t]{0.5\textwidth}
   \centering
    \includegraphics[width=0.95\linewidth]{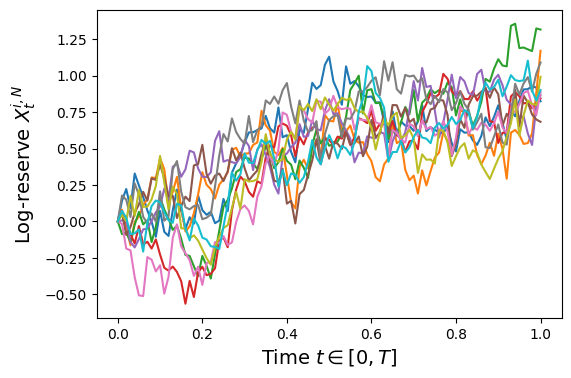}
    \caption{10 paths of particles $(X^{i,N}_t)_{t\in[0,T]}$}
    \label{fig:MFG_simulation_examples}
\end{minipage}
\end{figure}

\noindent


\appendix
\section{Appendix}\label{section:appendix}

\begin{proof}[Proof of Lemma \ref{lem:cond expec inequality}]
For a fixed $\omega^0 \in \Omega^0$, we have
\begin{align*}
    \mathcal{W}_p^p\big(\mathcal{L}^1(Y^1)(\omega^0), \,\mathcal{L}^1(Y^2)(\omega^0)\big) &= \mathcal{W}_p^p\big(\mathcal{L}(Y^1(\omega^0, \cdot)),\, \mathcal{L}(Y(\omega^0,\, \cdot))\big) \\
    & \leq \E^1 \big[ \big| Y_1(\omega^0,\, \cdot) - Y_2(\omega^0,\, \cdot)\big|^p \big]
\end{align*}
This inequality is true for every $\omega^0 \in \Omega^0$, hence we have
\begin{equation*}
    \E\big[\mathcal{W}_p^p(\mathcal{L}^1(Y_1), \mathcal{L}^1(Y_2))\big] \leq \E\big[|Y_1-Y_2\|^p \big]
\end{equation*}
Then we have 
\[ \big\|\mathcal{W}_p(\mathcal{L}^1(Y_1), \mathcal{L}^1(Y_2))\big\|_p \leq \|Y_1-Y_2\|_p. \hfill \qedhere \]
\end{proof}
\medskip

\noindent
We present the proofs of Lemma \ref{lem:Lp euler scheme} and Lemma \ref{lem:holder continuity solution} required by the proof of Proposition \ref{prop:euler scheme cv}.

\begin{proof}[Proof of Lemma \ref{lem:Lp euler scheme}]
We reason by forward induction. By Minkowski's inequality, we have
\begin{align*}
    \big\|\bar{X}_{t_1}\big\|_p &= \big\|\bar{X}_0 + h\cdot b(0,\bar{X}_0,\bar{\mu}_0)t_1+ \sigma(0,\bar{X}_0,\bar{\mu}_0)W_{t_1} + \sigma^0(0,\bar{X}_0,\bar{\mu}_0)W^0_{t_1}\big\|_p \\
    &\leq \big\|\bar{X}_0\big\|_p + h\big\|b(0,\bar{X}_0,\bar{\mu}_0)\big\|_p + \sqrt{h}\big\|\sigma(0,\bar{X}_0,\bar{\mu}_0)Z_1\big\|_p + \sqrt{h}\big\|\sigma^0(0,\bar{X}_0,\bar{\mu}_0)Z^0_1\big\|_p 
\end{align*}
Since the solution $(X_t)_{t\in[0,T]}$ and the continuous scheme $(\bar{X}_t)_{t\in[0,T]}$ have the same initial condition $X_0$, we have, $\mathbb{P}$-almost surely  
\[\bar{X}_0=X_0\in L^p(\Omega) \text{ and } \bar{\mu}_0=\mathcal{L}(X_0). \] 
For every $0\leq m\leq M-1$, $Z_{m+1}$ and $Z^0_{m+1}$ are independent of $(\bar{X}_{t_m},\, \ldots, \bar{X}_{t_1},X_0)$ and identically distributed. Moreover, since $b$, $\sigma$ and $\sigma^0$ have a linear growth, we have by Minkowski's inequality
\begin{align*}
    \big\|\bar{X}_{t_1}\big\|_p &\leq \|X_0\|_p + C_{p,L,T,b,\sigma,\sigma^0}\big(1+\|X_0\|_p + \big\|\mathcal{W}_p(\mu_0,\delta_0)\big\|_p\big)\big(h+\sqrt{h}\|Z_1\|_p+\sqrt{h}\|Z^0_1\|_p \big) \\
    &\leq \|X_0\|_p + C_{p,L,T,b,\sigma,\sigma^0}\big(1+2\|X_0\|_p\big)(h+\sqrt{h}).
\end{align*}
For $1\leq m \leq M-1$, we repeat the same reasoning to show that 
\begin{align*}
     \big\|\bar{X}_{t_{m+1}}\big\|_p &\leq \|\bar{X}_{t_m}\|_p + C_{p,L,T,b,\sigma,\sigma^0}\big(1+2\|\bar{X}_{t_m}\|_p\big)(h+ C_p\sqrt{h}) \\
     &\leq C_{p,L,T,b,\sigma,\sigma^0,h}^0 \|\bar{X}_{t_m}\|_p + C_{p,L,T,b,\sigma,\sigma^0,h}^1.
\end{align*}
By forward induction, we have for every $0\leq m \leq M$,  
\begin{equation} \label{eq:bound Lp euler scheme step}
    \big\|\bar{X}_{t_m}\big\|_p \leq C\big(1+\|X_0\|_p\big),
\end{equation}
where $C$ is a positive constant depending on the parameters $p$, $L$, $T$, $h$, $M$ and the coefficients $b$, $\sigma$ and $\sigma^0$.
For every $t\in[0,T]$, we recall the notation $\underline{t}=t_m$ where $m\in\{0,\ldots,M\}$ such that $t_m \leq t <t_{m+1}$. By definition \eqref{eq:continuous euler scheme MKV} of the continuous Euler scheme $(\bar{X}_t)_{t\in[0,T]}$, it is $\mathbb{F}$-adapted and right continuous so is progressively measurable. Moreover, we have
\begin{align}
    &\bigg\|\underset{0\leq t\leq T}{\sup}\big|\bar{X}_t\big|\bigg\|_p = \bigg\|\underset{0\leq t\leq T}{\sup} \Big|X_0 + 
    \int_0^t b\big(\underline{s},\bar{X}_{\underline{s}}, \bar{\mu}_{\underline{s}}\big)ds + \int_0^t \sigma\big(\underline{s},\bar{X}_{\underline{s}}, \bar{\mu}_{\underline{s}}\big)dW_s  + \int_0^t \sigma^0\big(\underline{s},\bar{X}_{\underline{s}}, \bar{\mu}_{\underline{s}}\big)dW^0_s \Big| \bigg\|_p \notag \\
    &\leq \|X_0\|_p + \bigg\|\underset{0\leq t\leq T}{\sup} \Big| \int_0^t b\big(\underline{s},\bar{X}_{\underline{s}}, \bar{\mu}_{\underline{s}}\big)ds \Big|\bigg\|_p + \bigg\|\underset{0\leq t\leq T}{\sup} \Big|\int_0^t \sigma\big(\underline{s},\bar{X}_{\underline{s}}, \bar{\mu}_{\underline{s}}\big)dW_s\Big|\bigg\|_p \notag \\
    &\hspace{1.6cm} +\bigg\|\underset{0\leq t\leq T}{\sup} \Big| \int_0^t \sigma^0\big(\underline{s},\bar{X}_{\underline{s}}, \bar{\mu}_{\underline{s}}\big)dW^0_s \Big| \bigg\|_p \notag \\ 
    & \leq \|X_0\|_p + \bigg\| \int_0^T \Big| b\big(\underline{s},\bar{X}_{\underline{s}}, \bar{\mu}_{\underline{s}}\big) \Big| ds \bigg\|_p + C_{d,p}\bigg\| \Big(\int_0^T \big|\sigma\big(\underline{s},\bar{X}_{\underline{s}}, \bar{\mu}_{\underline{s}}\big)\big|^2 ds\Big)^{\frac{1}{2}}\bigg\|_p \\ 
    & \quad + C_{d,p}\bigg\| \Big(\int_0^T \big|\sigma^0\big(\underline{s},\bar{X}_{\underline{s}}, \bar{\mu}_{\underline{s}}\big)\big|^2 ds\Big)^{\frac{1}{2}}\bigg\|_p \notag \\
    & \leq \|X_0\|_p + C_{L,T,b,\sigma,\sigma^0}\bigg[ \int_0^T \Big\| \big(1+|\bar{X}_{\underline{s}}| + \mathcal{W}_p(\bar{\mu}_{\underline{s}}, \delta_0)\big) \Big\|_p ds + \bigg\| \Big(\int_0^T \big|1+|\bar{X}_{\underline{s}}| + \mathcal{W}_p(\bar{\mu}_{\underline{s}},\delta_0)\big|^2ds\Big)^{\frac{1}{2}}\bigg\|_p \bigg] \label{eq:bound sup euler Lp}
\end{align}
where we use Minkowski's inequality for the first inequality, the second inequality follows from Lemma \ref{lem:BDG} and the linear growth of the coefficients. By applying Young's inequality on \eqref{eq:bound sup euler Lp}, we can infer that
\begin{align*}
    &\bigg\|\underset{0\leq t\leq T}{\sup}\big|\bar{X}_t\big|\bigg\|_p \\
    & \leq \|X_0\|_p + C_{L,T,d,p,b,\sigma,\sigma^0} \bigg[\int_0^T \big(1+2\|\bar{X}_{\underline{s}}\|_p\big) ds + \sqrt{T} + \bigg(\int_0^T \|\bar{X}_{\underline{s}}\|_p^2ds \bigg)^\frac{1}{2} + \bigg(\int_0^T \mathcal{W}_p^2(\bar{\mu}_{\underline{s}},\delta_0)ds \bigg)^\frac{1}{2}  \bigg] \\
    & \leq \|X_0\|_p + C_{L,T,d,p,b,\sigma,\sigma^0} \bigg[\int_0^T \big(1+2\|\bar{X}_{\underline{s}}\|_p\big) ds + \bigg(\int_0^T \|\bar{X}_{\underline{s}}\|_p^2ds \bigg)^\frac{1}{2} \bigg],
\end{align*}
from Lemma \ref{lem:cond expec inequality}. Then we deduce the following inequality to which we apply Lemma \ref{lem:a la Gronwall}
\begin{equation}
    \bigg\|\underset{0\leq t\leq T}{\sup}\big|\bar{X}_t\big|\bigg\|_p \leq \|X_0\|_p + C_{L,T,d,p,b,\sigma,\sigma^0} \bigg[\int_0^T \Big\|\sup_{0\leq u\leq s} |\bar{X}^M_u|\Big\|_p ds + \bigg(\int_0^T \Big\|\sup_{0\leq u\leq s} |\bar{X}^M_u|\Big\|_p^2ds \bigg)^\frac{1}{2} \bigg].
\end{equation}
Since $\bar{X}^M \in \mathbb{L}^p$, the application $t\mapsto\Big\|\sup_{0\leq u\leq t} \bar{X}^M_u\Big\|_p$ is continuous, non-negative and non-decreasing on $[0,T]$, the final estimate follows from Lemma \ref{lem:a la Gronwall}
\begin{equation*}
    \bigg\|\underset{0\leq t\leq T}{\sup}\big|\bar{X}_t\big|\bigg\|_p \leq \tilde{C}\big(1+\|X_0\|_p\big),
\end{equation*}
where $\tilde{C}$ is a positive constant depending on $p$, $L$, $T$, $h$, $M$ and the coefficients $b$, $\sigma$ and $\sigma^0$. Hence $\bar{X} \in\mathbb{L}^p([0,T]\times\Omega)$.
\end{proof}

\begin{proof}[Proof of Lemma \ref{lem:L1_cond_law}]
The first step of the proof is inspired by the proof of \cite[Proposition 2.9]{Carmona_Delarue2}. The first additional step is to combine the results of the Lemmas \ref{lem: 2.5 Carmona} and \ref{lem:Lp euler scheme}, that is $(\bar{X}_t)_{t\in[0,T]} \in \mathbb{L}^p([0,T] \times \Omega)$ and $\mathcal{L}^1(\bar{X}_t)_{t\in[0,T]}$ has continuous paths in $\mathcal{P}_p(\R^d)$ and is $\mathbb{F}^0$-adapted. The second one follows directly from the construction of the Euler scheme $(\bar{X}_t)_{t\in[0,T]}$. There exists a unique process that satisfies Equation \eqref{eq:continuous euler scheme MKV} for the Brownian motions $(W_t)_{t\in[0,T]}$ and $(W^0_t)_{t\in[0,T]}$. 
The second step is a consequence of Lemma \ref{lem: 2.5 Carmona} and Lemma \ref{lem:Lp euler scheme}.
\end{proof}

\begin{proof}[Proof of Lemma \ref{lem:holder continuity solution}]
Set $(\mu_t)_{t\in[0,T]} = \big(\mathcal{L}^1(X_t)\big)_{t\in[0,T]}$. Fix $s,t\in[0,T]$ such that $s\leq t$. Since $(X_t)_{t\in[0,T]}$ solves the McKean-Vlasov equation with common noise \eqref{eq:MKV}, we have
\begin{align*}
    &\big\|X_t-X_s\big\|_p = \bigg\|\int_s^t b(u,X_u,\mu_u)du + \int_s^t\sigma(u,X_u,\mu_u)dW_u + \int_s^t\sigma^0(u,X_u,\mu_u)dW^0_u\bigg\|_p \\
    &\leq \bigg\|\int_s^t b(u,X_u,\mu_u)du\bigg\|_p + \bigg\|\int_s^t\sigma(u,X_u,\mu_u)dW_u\bigg\|_p + \bigg\|\int_s^t\sigma^0(u,X_u,\mu_u)dW^0_u\bigg\|_p \\
    &\leq \int_s^t \big\|b(u,X_u,\mu_u)\big\|_pdu + C_{d,p} \bigg\|\Big(\int_s^t\sigma(u,X_u,\mu_u)^2du\Big)^\frac{1}{2}\bigg\|_p + C_{d,p} \bigg\|\Big(\int_s^t\sigma^0(u,X_u,\mu_u)^2du\Big)^\frac{1}{2}\bigg\|_p \\
    &=  \int_s^t \big\|b(u,X_u,\mu_u)\big\|_pdu + C_{d,p} \bigg\|\int_s^t\sigma(u,X_u,\mu_u)^2du\bigg\|_\frac{p}{2}^\frac{1}{2} + C_{d,p} \bigg\|\int_s^t\sigma^0(u,X_u,\mu_u)^2du\bigg\|_\frac{p}{2}^\frac{1}{2} \\
    &\leq \int_s^t \big\|b(u,X_u,\mu_u)\big\|_pdu + C_{d,p} \bigg(\int_s^t\big\|\sigma(u,X_u,\mu_u)^2\big\|_\frac{p}{2}du\bigg)^\frac{1}{2} + C_{d,p} \bigg(\int_s^t\big\|\sigma^0(u,X_u,\mu_u)^2\big\|_\frac{p}{2}du\bigg)^\frac{1}{2} \\
    & = \int_s^t \big\|b(u,X_u,\mu_u)\big\|_pdu + C_{d,p} \bigg(\int_s^t\big\|\sigma(u,X_u,\mu_u)\big\|_p^2du\bigg)^\frac{1}{2} + C_{d,p} \bigg(\int_s^t\big\|\sigma^0(u,X_u,\mu_u)\big\|_p^2du\bigg)^\frac{1}{2}.
\end{align*}
where we used the Minkowski's inequality at the first inequality, the second one follows from the general Minkowski's (Lemma \ref{lem:general minkowski}) and Burkölder-Davis-Gundy inequalities (Lemma \ref{lem:BDG}) and the general Minkowski's inequality provides the last one. We use the linear growth of the coefficients and Minkowski's and Young's inequalities to get that 
\begin{align*}
    &\big\|X_t-X_s\big\|_p \leq C_{L,T,b,\sigma,\sigma^0} \bigg(\int_s^t\big\|1+|X_u|+\mathcal{W}_p(\mu_u,\delta_0)\big\|_pdu + 2\bigg[ \int_s^t\big\|1+|X_u|+\mathcal{W}_p(\mu_u,\delta_0)\big\|_p^2du \bigg]^\frac{1}{2} \bigg)\\
    &\leq C_{L,T,b,\sigma,\sigma^0} \bigg(\int_s^t\big(1+\big\|X_u\big\|_p+\big\|\mathcal{W}_p(\mu_u,\delta_0)\big\|_p\big)du + 2\bigg[ \int_s^t\big(1+\big\|X_u\big\|_p+\big\|\mathcal{W}_p(\mu_u,\delta_0)\big\|_p\big)^2du \bigg]^\frac{1}{2} \bigg)\\
    &\leq C_{L,T,b,\sigma,\sigma^0} \bigg(\int_s^t\big(1+\big\|X_u\big\|_p\big)du + \bigg[ \int_s^t\big(1+\big\|X_u\big\|_p\big)^2du \bigg]^\frac{1}{2} \bigg)\\
    &\leq C_{L,T,b,\sigma,\sigma^0} \bigg(\int_s^t\big(1+\big\|X_u\big\|_p\big)du + \bigg[ \int_s^t\big(1+\big\|X_u\big\|_p^2\big)du \bigg]^\frac{1}{2} \bigg) \\
    &\leq C_{L,T,b,\sigma,\sigma^0}\bigg[ \bigg(1+\bigg\|\underset{0\leq t\leq T}{\sup}|X_t|\bigg\|_p\bigg)(t-s) + \bigg(1+\bigg\|\underset{0\leq t\leq T}{\sup}|X_t|\bigg\|_p^2\bigg)^\frac{1}{2}\sqrt{t-s} \bigg] \\
    &\leq \kappa\, \sqrt{t-s}\quad \text{(by Inequality \eqref{eq : estimate MKV solution})}.
\end{align*}
where $\kappa$ is a positive constant depending on $p$, $L$, $T$, $b$, $\sigma$, $\sigma^0$, and $\|X_0\|_p$. Then we have
\[ \big\|X_t-X_s\big\|_p \leq \kappa \sqrt{t-s}. \hfill \qedhere \] 
\end{proof}

\begin{proof}[Proof of Lemma \ref{lem:MKV sol cond iid}]

The particles $(\bar{Y}^i_{t})_{1\leq i\leq N}$ are copies of the continuous of the continuous extension $\bar{X}$ of the Euler scheme \eqref{eq:continuous euler scheme MKV}. From \eqref{eq:continuous euler solution scheme}, for every $1\leq i\leq N$, there exists a measurable function $\Psi^i$ on $\R^d \times \mathcal{C}([0,T],\R^q) \times \mathcal{C}([0,T],\R^q)$ such that:
\[ \bar{Y}^i = \Psi^i\Big(X_0^i, (W^i_t)_{0\leq t \leq T}, (W^0_t)_{0\leq t \leq T}\Big).\]
As the idiosyncratic noises and the initial random variables $(W^i,X_0^i)_{1\leq i\leq N}$ are i.i.d. by definition, the particles $(\bar{Y}^i)_{1\leq i\leq N}$ are identically distributed and independent conditionally to $W^0$.
\end{proof}

\printbibliography

@article{Euler,
    author = {Liu, Y. and Pag\`es, G.},
     title = {Functional convex order for the scaled {M}c{K}ean-{V}lasov
              processes},
   journal = {Ann. Appl. Probab.},
  fjournal = {The Annals of Applied Probability},
    volume = {33},
      year = {2023},
    number = {6A},
     pages = {4491--4527},
      issn = {1050-5164},
   mrclass = {60H10 (60E15 60H35)},
  mrnumber = {4674057},
MRREVIEWER = {Daniel C. Biles},
       doi = {10.1214/22-aap1924},
       url = {https://doi.org/10.1214/22-aap1924},
}

@article{NumMethMKV,
    author = {Liu, Y.},
     title = {Particle method and quantization-based schemes for the
              simulation of the {M}c{K}ean-{V}lasov equation},
   journal = {ESAIM Math. Model. Numer. Anal.},
  fjournal = {ESAIM. Mathematical Modelling and Numerical Analysis},
    volume = {58},
      year = {2024},
    number = {2},
     pages = {571--612},
      issn = {2822-7840},
   mrclass = {65C35 (60G65 60H15 60H35 65C30)},
  mrnumber = {4730912},
       doi = {10.1051/m2an/2024007},
       url = {https://doi.org/10.1051/m2an/2024007},
}

@online{Lacker,
    author = {Lacker, D.},
    title = {Mean Field Games and interacting particle systems},
    keywords = "Lacker",
}

@article{chassagneux2024computing,
  title={Computing the invariant distribution of McKean-Vlasov SDEs by ergodic simulation},
  author={Chassagneux, J.-F. and Pag\`es, G.},
  journal={arXiv preprint arXiv:2406.13370},
  year={2024},
}

@book{Carmona_Delarue1,
    author = {Carmona, R. and Delarue, F.},
     title = {Probabilistic theory of mean field games with applications.
              {I}},
    serie = {Probability Theory and Stochastic Modelling},
    volume = {83},
      note = {Mean field FBSDEs, control, and games},
 publisher = {Springer, Cham},
      year = {2018},
     pages = {xxv+713},
   mrclass = {60-02 (35R60 49N70 49N90 60H15 60H30 91A15 93E20)},
  mrnumber = {3752669},
}

@book{Carmona_Delarue2,
    author = {Carmona, R. and Delarue, F.},
     title = {Probabilistic theory of mean field games with applications.
              {II}},
    serie = {Probability Theory and Stochastic Modelling},
    volume = {84},
      note = {Mean field games with common noise and master equations},
 publisher = {Springer, Cham},
      year = {2018},
     pages = {xxiv+697},
   mrclass = {60-02 (35R60 49L20 60G55 60H10 60H30 91A13 91A15)},
  mrnumber = {3753660},
}

@book{NumericalProbability,
    title = {Numerical Probaility},
    author = {Pagès, G.},
    isbn = {978-3-319-90276-0},
    serie = {Universitext},
    year = {2018},
    publisher = {Springer},
    keywords = {NumericalProbability},
}

@article{FournierGuillin,
    author = {Fournier, N. and Guillin, A.},
     title = {On the rate of convergence in {W}asserstein distance of the
              empirical measure},
   journal = {Probab. Theory Related Fields},
  fjournal = {Probability Theory and Related Fields},
    volume = {162},
      year = {2015},
    number = {3-4},
     pages = {707--738},
      issn = {0178-8051},
   mrclass = {60F25 (60E15 60F10)},
  mrnumber = {3383341},
       doi = {10.1007/s00440-014-0583-7},
       url = {https://doi.org/10.1007/s00440-014-0583-7},
}

@article{McKean,
    author = {McKean, Jr., H. P.},
     title = {A class of {M}arkov processes associated with nonlinear
              parabolic equations},
   journal = {Proc. Nat. Acad. Sci. U.S.A.},
  fjournal = {Proceedings of the National Academy of Sciences of the United
              States of America},
    volume = {56},
      year = {1966},
     pages = {1907--1911},
      issn = {0027-8424},
   mrclass = {60.62},
  mrnumber = {221595},
       doi = {10.1073/pnas.56.6.1907},
       url = {https://doi.org/10.1073/pnas.56.6.1907},
}

@unpublished{DensitySimuMKV,
  title = {{A statistical approach for simulating the density solution of a McKean-Vlasov equation}},
  author = {Hoffmann, M. and Liu, Y.},
  url = {https://hal.science/hal-04096108},
  note = {working paper or preprint},
  year = {2023},
  KEYWORDS = {Interacting particle systems ; McKean-Vlasov models ; Euler scheme ; Oracle inequalities ; Lepski's method},
  PDF = {https://hal.science/hal-04096108v1/file/Density_estimation_McKean.pdf},
  hal_id = {hal-04096108},
  hal_version = {v1},
}

@article{MFGwithCommonNoise,
    author = {Carmona, R. and Delarue, F. and Lacker, D.},
     title = {Errata: {M}ean field games with common noise},
   journal = {Ann. Probab.},
  fjournal = {The Annals of Probability},
    volume = {48},
      year = {2020},
    number = {5},
     pages = {2644--2646},
      issn = {0091-1798},
   mrclass = {91A16 (60H10 91A15 93E20)},
  mrnumber = {4152654},
       doi = {10.1214/20-AOP1432},
       url = {https://doi.org/10.1214/20-AOP1432},
}

@article{WellPosednessMFGCommonNoise,
    author = {Ahuja, S.},
     title = {Wellposedness of mean field games with common noise under a
              weak monotonicity condition},
   journal = {SIAM J. Control Optim.},
  fjournal = {SIAM journal on Control and Optimization},
    volume = {54},
      year = {2016},
    number = {1},
     pages = {30--48},
      issn = {0363-0129},
   mrclass = {91A13 (49J55 49N70 60H10 60H30 91A15 91A23 93E20)},
  mrnumber = {3439756},
       doi = {10.1137/140974730},
       url = {https://doi.org/10.1137/140974730},
}

@article{LinQuadControl_MFGCommonNoise,
    author = {Graber, P. Jameson},
     title = {Linear quadratic mean field type control and mean field games
              with common noise, with application to production of an
              exhaustible resource},
   journal = {Appl. Math. Optim.},
  fjournal = {Applied Mathematics and Optimization},
    volume = {74},
      year = {2016},
    number = {3},
     pages = {459--486},
      issn = {0095-4616},
   mrclass = {91A13 (49K45 60H30 91A15 91A23 91B76 93E20)},
  mrnumber = {3575612},
       doi = {10.1007/s00245-016-9385-x},
       url = {https://doi.org/10.1007/s00245-016-9385-x},
}

@article{TranslationInvariantMFGCommonN,
    author = {Lacker, D. and Webster, K.},
     title = {Translation invariant mean field games with common noise},
   journal = {Electron. Commun. Probab.},
  fjournal = {Electronic Communications in Probability},
    volume = {20},
      year = {2015},
     pages = {no. 42, 13},
   mrclass = {60H30 (91A13 91A15)},
  mrnumber = {3358964},
       doi = {10.1214/ECP.v20-3822},
       url = {https://doi.org/10.1214/ECP.v20-3822},
}

@inproceedings{kac1956foundations,
  title={Foundations of kinetic theory},
  author={Kac, M.},
  booktitle={Proceedings of The third Berkeley symposium on mathematical statistics and probability},
  volume={3},
  number={600},
  pages={171--197},
  year={1956},
}

@article{lacker2018strong,
    author = {Lacker, D.},
     title = {On a strong form of propagation of chaos for
              {M}c{K}ean-{V}lasov equations},
   journal = {Electron. Commun. Probab.},
  fjournal = {Electronic Communications in Probability},
    volume = {23},
      year = {2018},
     pages = {Paper No. 45, 11},
   mrclass = {60H10 (35Q83 35R60 60B10 60K35)},
  mrnumber = {3841406},
       doi = {10.1214/18-ECP150},
       url = {https://doi.org/10.1214/18-ECP150},
}

@article{lacker2023hierarchies,
    author = {Lacker, D.},
     title = {Hierarchies, entropy, and quantitative propagation of chaos
              for mean field diffusions},
   journal = {Probab. Math. Phys.},
  fjournal = {Probability and Mathematical Physics},
    volume = {4},
      year = {2023},
    number = {2},
     pages = {377--432},
      issn = {2690-0998},
   mrclass = {82C22 (60F17 60H10)},
  mrnumber = {4595391},
       doi = {10.2140/pmp.2023.4.377},
       url = {https://doi.org/10.2140/pmp.2023.4.377},
}

@incollection{TopicsPoC,
    author = {Sznitman, A.-S.},
     title = {Topics in propagation of chaos},
 booktitle = {\'{E}cole d'\'{E}t\'{e} de {P}robabilit\'{e}s de {S}aint-{F}lour {XIX}---1989},
    serie = {Lecture notes in Math.},
    volume = {1464},
     pages = {165--251},
 publisher = {Springer, Berlin},
      year = {1991},
   mrclass = {60J60 (60K35 82C40)},
  mrnumber = {1108185},
       doi = {10.1007/BFb0085169},
       url = {https://doi.org/10.1007/BFb0085169},
}

@article{Reis_SimuMKV,
    author = {dos Reis, G. and Engelhardt, S. and Smith, G.},
     title = {Simulation of {M}c{K}ean-{V}lasov {SDE}s with super-linear
              growth},
   journal = {IMA J. Numer. Anal.},
  fjournal = {IMA journal of Numerical Analysis},
    volume = {42},
      year = {2022},
    number = {1},
     pages = {874--922},
      issn = {0272-4979},
   mrclass = {65C30 (60H35)},
  mrnumber = {4367675},
       doi = {10.1093/imanum/draa099},
       url = {https://doi.org/10.1093/imanum/draa099},
}

@article{bossy1997stochastic,
    author = {Bossy, M. and Talay, D.},
     title = {A stochastic particle method for the {M}c{K}ean-{V}lasov and
              the {B}urgers equation},
   journal = {Math. Comp.},
  fjournal = {Mathematics of Computation},
    volume = {66},
      year = {1997},
    number = {217},
     pages = {157--192},
      issn = {0025-5718},
   mrclass = {60K35 (65C05)},
  mrnumber = {1370849},
       doi = {10.1090/S0025-5718-97-00776-X},
       url = {https://doi.org/10.1090/S0025-5718-97-00776-X},
}

@article{antonelli2002rate,
    author = {Antonelli, F. and Kohatsu-Higa, A.},
     title = {Rate of convergence of a particle method to the solution of
              the {M}c{K}ean-{V}lasov equation},
   journal = {Ann. Appl. Probab.},
  fjournal = {The Annals of Applied Probability},
    volume = {12},
      year = {2002},
    number = {2},
     pages = {423--476},
      issn = {1050-5164},
   mrclass = {60H10 (60H07 60K35)},
  mrnumber = {1910635},
       doi = {10.1214/aoap/1026915611},
       url = {https://doi.org/10.1214/aoap/1026915611},
}

@article{CHEN2022127180,
    author = {Chen, X. and dos Reis, G.},
     title = {A flexible split-step scheme for solving {M}c{K}ean-{V}lasov
              stochastic differential equations},
   journal = {Appl. Math. Comput.},
  fjournal = {Applied Mathematics and Computation},
    volume = {427},
      year = {2022},
     pages = {Paper No. 127180, 23},
      issn = {0096-3003},
   mrclass = {65C05 (34A08 60H35 65C30 65C35)},
  mrnumber = {4413221},
       doi = {10.1016/j.amc.2022.127180},
       url = {https://doi.org/10.1016/j.amc.2022.127180},
}

@article{ren2024risk,
  title={Risk-Sensitive Mean Field Games with Common Noise: A Theoretical Study with Applications to Interbank Markets},
  author={Ren, X. and Firoozi, D.},
  journal={arXiv preprint arXiv:2403.03915},
  year={2024},
}

@article{Martzel_2001,
    author = {Martzel, N. and Aslangul, C.},
     title = {Mean-field treatment of the many-body {F}okker-{P}lanck
              equation},
   journal = {J. Phys. A},
  fjournal = {journal of Physics. A. Mathematical and General},
    volume = {34},
      year = {2001},
    number = {50},
     pages = {11225--11240},
      issn = {0305-4470},
   mrclass = {82C31 (82C35)},
  mrnumber = {1874137},
       doi = {10.1088/0305-4470/34/50/305},
       url = {https://doi.org/10.1088/0305-4470/34/50/305},
}

@article{Bossy_2022,
    author = {Bossy, M. and Jabir, J.-F. and Mart\'{\i}nez
              Rodr\'{\i}guez, K.},
     title = {Instantaneous turbulent kinetic energy modelling based on
              {L}agrangian stochastic approach in {CFD} and application to
              wind energy},
   journal = {J. Comput. Phys.},
  fjournal = {journal of Computational Physics},
    volume = {464},
      year = {2022},
     pages = {Paper No. 110929, 29},
      issn = {0021-9991},
   mrclass = {65C30 (76M35)},
  mrnumber = {4428787},
       doi = {10.1016/j.jcp.2021.110929},
       url = {https://doi.org/10.1016/j.jcp.2021.110929},
}

@article{baladron2012mean,
    author = {Baladron, J. and Fasoli, D. and Faugeras, O. and
              Touboul, J.},
     title = {Mean-field description and propagation of chaos in networks of
              {H}odgkin-{H}uxley and {F}itz{H}ugh-{N}agumo neurons},
   journal = {J. Math. Neurosci.},
  fjournal = {journal of Mathematical Neuroscience},
    volume = {2},
      year = {2012},
     pages = {Art. 10, 50},
   mrclass = {92C20 (34F05 60F99 60J60 60K35)},
  mrnumber = {2974499},
       doi = {10.1186/2190-8567-2-10},
       url = {https://doi.org/10.1186/2190-8567-2-10},
}

@article{cardaliaguet2018mean,
    author = {Cardaliaguet, P. and Lehalle, C.-A.},
     title = {Mean field game of controls and an application to trade
              crowding},
   journal = {Math. Financ. Econ.},
  fjournal = {Mathematics and Financial Economics},
    volume = {12},
      year = {2018},
    number = {3},
     pages = {335--363},
      issn = {1862-9679},
   mrclass = {91A15 (91G80 93E20)},
  mrnumber = {3805247},
       doi = {10.1007/s11579-017-0206-z},
       url = {https://doi.org/10.1007/s11579-017-0206-z},
}

@article{LASRY2018886,
    title = {Mean-field games with a major player},
    journal = {Comptes Rendus Mathematique},
    volume = {356},
    number = {8},
    pages = {886-890},
    year = {2018},
    issn = {1631-073X},
    doi = {https://doi.org/10.1016/j.crma.2018.06.001},
    url = {https://www.sciencedirect.com/science/article/pii/S1631073X1830195X},
    author = {Lasry, J.-M. and Lions, P.-L.},
}

@article{MilsteinSchemeMKV,
    author = {Biswas, S. and Kumar, C. and Neelima and dos Reis,
              G. and Reisinger, C.},
     title = {An explicit {M}ilstein-type scheme for interacting particle
              systems and {M}c{K}ean-{V}lasov {SDE}s with common noise and
              non-differentiable drift coefficients},
   journal = {Ann. Appl. Probab.},
  fjournal = {The Annals of Applied Probability},
    volume = {34},
      year = {2024},
    number = {2},
     pages = {2326--2363},
      issn = {1050-5164},
   mrclass = {65C35 (60H35 65C05 65C30)},
  mrnumber = {4728171},
       doi = {10.1214/23-aap2024},
       url = {https://doi.org/10.1214/23-aap2024},
}

@article{MR4046528,
    author = {Delarue, F. and Foguen Tchuendom, R.},
     title = {Selection of equilibria in a linear quadratic mean-field game},
   journal = {Stochastic Process. Appl.},
  fjournal = {Stochastic Processes and their Applications},
    volume = {130},
      year = {2020},
    number = {2},
     pages = {1000--1040},
      issn = {0304-4149},
   mrclass = {91A16 (35L65 60F17 60H10)},
  mrnumber = {4046528},
       doi = {10.1016/j.spa.2019.04.005},
       url = {https://doi.org/10.1016/j.spa.2019.04.005},
}

@article{MR4022284,
    author = {Delarue, F.},
     title = {Restoring uniqueness to mean-field games by randomizing the
              equilibria},
   journal = {Stoch. Partial Differ. Equ. Anal. Comput.},
  fjournal = {Stochastic Partial Differential Equations. Analysis and
              Computations},
    volume = {7},
      year = {2019},
    number = {4},
     pages = {598--678},
      issn = {2194-0401},
   mrclass = {60H10 (60H15 60H30 91A13 91A15)},
  mrnumber = {4022284},
       doi = {10.1007/s40072-019-00135-9},
       url = {https://doi.org/10.1007/s40072-019-00135-9},
}

\end{document}